\title[The Local Existence and Blowup Criterion]{The Local Existence and Blowup Criterion for Strong Solutions to the Kinetic Cucker--Smale Model Coupled with the Compressible Navier--Stokes Equation}
\author[C. Jin]{Chunyin Jin}
\address[Chunyin Jin]{\newline Beijing International Center for Mathematical Research,\newline Peking University, Beijing 100871, P. R. China.}
\email{jinchunyin@pku.edu.cn}
\newtheorem{theorem}{Theorem}[section]
\newtheorem{definition}{Definition}[section]
\newtheorem{lemma}{Lemma}[section]
\newtheorem{proposition}{Proposition}[section]
\newcommand{\bbr}{\mathbb R}
\newcommand{\bbn}{\mathbb N}
\newcommand{\bbt}{\mathbb T}
\newcommand{\e}{\varepsilon}
\newcommand{\bx}{\mbox{\boldmath $x$}}
\newcommand{\by}{\mbox{\boldmath $y$}}
\newcommand{\bu}{\mbox{\boldmath $u$}}
\newcommand{\bv}{\mbox{\boldmath $v$}}
\newcommand{\bw}{\mbox{\boldmath $w$}}
\newcommand{\bz}{\mbox{\boldmath $z$}}
\begin{document}

\date{\today}

\keywords{Blowup criterion, strong solution, kinetic Cucker--Smale model, compressible Navier--Stokes equation}

\thanks{2010 Mathematics Subject Classification: 35B44, 35D35, 35Q35, 35Q92}

\begin{abstract}
In this paper, we establish the existence and uniqueness of local strong solutions to the kinetic Cucker--Smale model coupled with the isentropic compressible Navier--Stokes equation in the whole space. Moreover, the blowup mechanism for strong solutions to the coupled system is also investigated.
\end{abstract}

\maketitle \centerline{\date}
%%%%%%%%%%%%%%%%%%%%%%%%%%%%%%%%%%%%%%%%%%%%%%%%%%%%%%%%%%%%%%%%%%%%%%%%%%%%%%%%%%
%
%                              Sect 1. Introduction
%
%%%%%%%%%%%%%%%%%%%%%%%%%%%%%%%%%%%%%%%%%%%%%%%%%%%%%%%%%%%%%%%%%%%%%%%%%%%%%%%%%%
\section{Introduction}\label{sec-intro}
\setcounter{equation}{0}
In the present paper, we are concerned with the local existence and blowup criterion for strong solutions to the following kinetic Cucker--Smale model coupled with the isentropic compressible Navier--Stokes equation in the whole space $\bbr^3$.
\begin{equation} \label{eq-cs-ns}
     \begin{dcases}
         f_t + \bv \cdot \nabla_{\bx} f+ \nabla_{\bv} \cdot (L[f]f+(\bu-\bv)f)=0,\\
         \rho_t+\nabla \cdot (\rho \bu)=0,\\
         (\rho \bu)_t+\nabla \cdot (\rho \bu \otimes \bu)+\nabla P=\mu \Delta \bu +(\mu+\lambda)\nabla \nabla \cdot \bu+\int_{\bbr^3}f(\bv-\bu)d\bv,
     \end{dcases}
\end{equation}
subject to the initial data
\begin{equation} \label{eq-sys-inidata}
  f|_{t=0}=f_0, \quad \rho|_{t=0}=\rho_0, \quad \bu|_{t=0}=\bu_0.
\end{equation}
Here $f(t,\bx, \bv)$ is the particle distribution function in phase space $(\bx, \bv)$ at the time $t$, $\bx=(x_1,x_2,x_3)\in \bbr^3$, $\bv=(v_1,v_2,v_3) \in \bbr^3$. $\rho$ and $\bu=(u_1, u_2 ,u_3)$ denote the fluid density and velocity, respectively. The constant viscosity $\mu$ and $\lambda$ satisfy the physical restriction
\[
  \mu>0, \qquad 2\mu +3\lambda \ge 0.
\]
The pressure $P$ and $L[f]$ are respectively given by
\[
  P=\rho^{\gamma}, \quad \gamma>1;
\]
\[
 L[f](t, \bx, \bv)=\int_{\bbr^{6}}\varphi(|\bx-\by|)f(t, \by, \bv^*)(\bv^*-\bv)d \by d \bv^*,
\]
where $\varphi(\cdot)$ is a positive non-increasing function representing the interaction kernel. For convenience, we suppose $\varphi \in C^{\infty}$. If not, we mollify it by convolution. In fact, we only need $\varphi \in C^{1}$.  Without loss of generality, we postulate that
\[
 \max\{|\varphi|, |\varphi'|\} \le 1
\]
in the sequel.

The first equation in \ref{eq-cs-ns} is the kinetic Cucker--Smale model derived from the particle model by taking the mean-field limit; see \cite{Carrillo2010}\cite{Ha2009}. The well-posedness of measure-valued solution was also known in \cite{Carrillo2010}\cite{Ha2009}. As for weak and strong solutions in regular function space, Jin \cite{jin2018well} recently established the well-posedness by developing a unified framework. As a fact, an ensemble of particles is usually immersed in ambient fluid, such as gas and water. In order to render the model more realistic, it is natural to incorporate the influence of fluids. Such coupled models have been investigated in the space-periodic domain \cite{Bae2012}\cite{bae2014asymptotic}\cite{bae2014global}, however under strong regularity conditions on the initial data. Besides, the restriction that the interaction kernel $\varphi$ should have a positive lower bound in the torus $\bbt^3$, was crucially used in the analysis of time-asymptotic flocking behaviors for the coupled system. Here in this paper, we will contribute a study on the whole space situation, under a relaxed regularity condition on the initial data. If the Brownian effect is taken into account in the modeling, then the resulting model becomes of the Vlasov--Fokker--Planck type. This type of model allows for equilibrium states, i.e., steady solutions. Duan \cite{duan2010kinetic} studied the stability around a equilibrium under small initial perturbations. The same type results for coupled models with fluids were also obtained in \cite{carrillo2011global}\cite{goudon2010navier}\cite{li2017strong}, by using the micro-macro decomposition. For the hydrodynamic Cucker--Smale model and related coupled models with fluids, we refer to \cite{ha2014global}\cite{Ha2014}\cite{ha2015emergent}\cite{jin2015well}\cite{Jin}. The interested reader can also consult the review papers \cite{carrillo2010particle}\cite{choi2017emergent} for the state of the art in this territory.

The rest two equations in \eqref{eq-cs-ns} are the isentropic compressible Navier--Stokes equation with the coupling term. For the multi-dimensional compressible Navier--Stokes equation, the local existence and uniqueness were obtained in \cite{nash1962probleme}\cite{tani1977first}, when the initial density was away from vacuum, i.e., the initial density had a positive lower bound. The global-in-time classical solutions was first constructed in \cite{matsumura1980initial} around a non-vacuum equilibrium, under small initial perturbations in $H^3$. As regard the global existence for large data, the breakthrough was due to Lions \cite{lions1996mathematicalv2}, where the finite energy weak solutions were obtained when $P=\rho^{\gamma}$ $(\gamma>\frac95)$, by means of the weak convergence method. Using the framework in \cite{lions1996mathematicalv2}, Feireisl \cite{feireisl2004dynamics} further relaxed the restriction on $\gamma$ to $\gamma>\frac32$. However, the uniqueness and regularity of weak solutions are still unknown until now. Xin \cite{xin1998blowup} first investigated the blowup mechanism for the classical solutions to the compressible Navier--Stokes equation with compactly supported initial density. Later, Huang, Li and Xin et al. \cite{huangli2013serrin}\cite{huang2013serrin}\cite{huang2011blowup}\cite{huang2011serrin} established a series of blowup criterions for the isentropic compressible Navier--Stokes equation, full compressible Navier--Stokes equation and MHD models, by ingeniously using Beal--Kato--Majda's logarithmic inequality. Based on their previous analyses on the blowup mechanism, together with the recent study \cite{Cho2006a} on local-in-time classical solutions to the compressible Navier--Stokes equation  with nonnegative initial densities, Huang--Li--Xin \cite{huang2012global} and Huang--Li \cite{huang2018global} successfully obtained the global-in-time classical solutions for the  isentropic compressible Navier--Stokes equation and full compressible Navier--Stokes equation, when the initial data satisfied some regularity and compatibility conditions, and the initial energies were suitably small. The key to both proofs was to derive the uniform bound on the density.

Combining our analysis on the kinetic Cucker--Smale model with the recent development in the compressible Navier--Stokes equation, it is shown that the difficulty in this paper is to tackle the coupling term. In order to overcome the hard estimates arising from the coupling term, we introduce a weighted Sobolev norm for $f(t,\cdot,\cdot)$. It turns out that all the $L^p$-type norms for $f(t,\bx,\bv)$ with respect to $\bx$ and $\bv$ can be controlled by the introduced weighted Sobolev norm. The weighted Sobolev space is defined as follows:
\begin{multline*}
 H_{\omega}^1(\bbr^3 \times \bbr^3):=\bigg\{h(\bx,\bv):\ h \in L_{\omega}^2(\bbr^3 \times \bbr^3), \\ \nabla_{\bx}h\in L_{\omega}^2(\bbr^3 \times \bbr^3),\ \nabla_{\bv}h \in L_{\omega}^2(\bbr^3 \times \bbr^3)\bigg\},
\end{multline*}
\[
 |h|_{H_{\omega}^1}^2:=|h|_{L_{\omega}^2}^2+|\nabla_{\bx}h|_{L_{\omega}^2}^2+|\nabla_{\bv}h|_{L_{\omega}^2}^2,
\]
where
\[
 |h|_{L_{\omega}^2}:=\left(\int_{\bbr^6}h^2(\bx,\bv)\omega(\bx,\bv)d\bx d\bv\right)^{\frac12},
\]
and
\[
 \omega(\bx,\bv):=(1+\bv^2)^{2+\beta}(1+\bx^2+\bv^2)^{\alpha},\quad \alpha>3,\  \beta>\frac12.
\]
In this paper, we adopt the following simplified notations for homogeneous Sobolev Spaces.
\[
 D^1(\bbr^3):=\left\{u\in L^6(\bbr^3):\ \nabla u \in L^2(\bbr^3)\right\},
\]
\[
 D^2(\bbr^3):=\left\{u\in L_{loc}^1(\bbr^3):\ \nabla^2u \in L^2(\bbr^3)\right\},
\]
\[
 D^{2,p}(\bbr^3):=\left\{u\in L_{loc}^1(\bbr^3):\ \nabla^2u \in L^p(\bbr^3)\right\}, \quad 1\le p\le \infty.
\]
Next we give the definition of strong solutions to \eqref{eq-cs-ns}.
\begin{definition} \label{def-stro}
Let $3<q\le 6$. $(f(t,\bx,\bv), \rho(t,\bx),\bu(t,\bx))$ is said to be a strong solution to \eqref{eq-cs-ns} in $[0,T]\times \bbr^3\times \bbr^3$, if
\[
\begin{aligned}
  &f(t,\bx,\bv)\in C([0,T];H_{\omega}^1(\bbr^3 \times \bbr^3)),\\
  &\rho(t,\bx)\in C([0,T];H^1\cap W^{1,q}(\bbr^3)),\\
  &\bu(t,\bx)\in C([0,T];D^1\cap D^{2}(\bbr^3))\cap L^2(0,T;D^{2,q}(\bbr^3)),\\
  &\rho_t(t,\bx)\in C([0,T];L^2\cap L^{q}(\bbr^3)),\quad \bu_t(t,\bx)\in L^2(0,T;D^{1}(\bbr^3)),
\end{aligned}
\]
and \eqref{eq-cs-ns} holds in the sense of distributions.
\end{definition}

Denote by $B(R_0)$ the ball centered at the origin with a radius $R_0$. Then the theorems in this paper can be stated as follows.
\begin{theorem}[Local existence] \label{thm-loc-exist}
Let $3<q\le 6$, $R_0>0$. Assume the initial data $f_0(\bx,\bv)\ge 0$, $\rho_0(\bx)\ge 0$ and $\bu_0(\bx)$ satisfy
\[
 f_0(\bx,\bv) \in H_{\omega}^1(\bbr^3 \times \bbr^3), \quad \rho_0(\bx) \in H^1\cap W^{1,q}(\bbr^3), \quad \bu_0(\bx) \in D^1\cap D^{2}(\bbr^3)
\]
with the compatibility condition
\[
 \rho_0^{\frac12}g+\nabla P(\rho_0)=\mu \Delta \bu_0+(\mu+\lambda)\nabla \nabla \cdot \bu_0+\int_{\bbr^3}f_0(\bv-\bu_0)d \bv, \quad g \in L^2(\bbr^3),
\]
and the initial support of $f_0(\bx,\bv)$ with respect to $\bv$
\[
 \text{supp}_{\bv}f_0(\bx,\cdot)\subseteq B(R_0) \quad \text{for all $\bx \in \bbr^3$}.
\]
Then the Cauchy problem \eqref{eq-cs-ns}-\eqref{eq-sys-inidata} admits a unique local strong solution in the sense of Definition \ref{def-stro}.
\end{theorem}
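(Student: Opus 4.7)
The plan is to follow the standard linearization-plus-iteration paradigm for coupled kinetic--fluid systems, combined with the vacuum-regularization technique of Cho--Choe--Kim for the compressible Navier--Stokes component. First I would replace $\rho_0$ by the strictly positive $\rho_0^\delta := \rho_0 + \delta$ for a small parameter $\delta > 0$, so that the linearized momentum equation is uniformly parabolic. Rewriting the compatibility condition in the form $\sqrt{\rho_0^\delta}\, g^\delta = \mu \Delta \bu_0 + (\mu+\lambda)\nabla \nabla \cdot \bu_0 + \int_{\bbr^3} f_0(\bv - \bu_0)\, d\bv - \nabla P(\rho_0^\delta)$ with $g^\delta \to g$ strongly in $L^2$ provides a $\delta$-uniform bound on $\sqrt{\rho^\delta}\,\bu^\delta_t$ at $t=0$, which is essential for propagating higher regularity.

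Next I would set up an iteration scheme: starting from a reasonable choice $(f^0, \rho^0, \bu^0)$ and given $(f^k, \rho^k, \bu^k)$, produce $(f^{k+1}, \rho^{k+1}, \bu^{k+1})$ by solving three decoupled linear problems in sequence. First, $f^{k+1}$ solves the Vlasov-type transport equation obtained by freezing $L[f^k]$ and $\bu^k$ in the drift, which can be integrated by the method of characteristics in $H^1_\omega$. Second, $\rho^{k+1}$ comes from the linear continuity equation with transport velocity $\bu^k$. Third, $\bu^{k+1}$ solves the linear non-degenerate Lam\'e-type system $\rho^{k+1}\bu^{k+1}_t + \rho^{k+1}(\bu^k\cdot\nabla)\bu^{k+1} + \bigl(\int f^{k+1}\,d\bv\bigr)\bu^{k+1} = \mu \Delta \bu^{k+1} + (\mu+\lambda)\nabla\nabla\cdot\bu^{k+1} - \nabla P(\rho^{k+1}) + \int f^{k+1}\bv\,d\bv$, where the coupling term is split into a damping coefficient on the left and a source on the right, so that standard parabolic theory applies.

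The heart of the argument is to derive $k$- and $\delta$-uniform bounds on a short time interval $[0,T]$. For $f^{k+1}$ I would perform weighted energy estimates, testing successively against $\omega f^{k+1}$, $\omega\,\nabla_{\bx}f^{k+1}$, $\omega\,\nabla_{\bv}f^{k+1}$; the choice $\alpha > 3$, $\beta > 1/2$ makes $\omega^{-1}$ and its required $\bv$-moments integrable, so that the $\bv \cdot \nabla_\bx$, $\bu \cdot \nabla_\bv$, and $L[f^k]$ contributions can be absorbed after Cauchy--Schwarz and Sobolev embedding. A characteristic analysis yields propagation of the $\bv$-support of $f^{k+1}$ inside a ball $B(R(T))$. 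For $(\rho^{k+1}, \bu^{k+1})$ I would import the now-standard compressible Navier--Stokes estimates with vacuum: transport estimates for $\rho^{k+1}$ in $H^1 \cap W^{1,q}$; Lam\'e-system regularity for $\bu^{k+1}$; a bound on $\sqrt{\rho^{k+1}}\,\bu^{k+1}_t$ in $L^\infty_t L^2_\bx$ obtained by differentiating the momentum equation in time and invoking the compatibility condition at $t=0$; and then $L^2(0,T;D^{2,q})$ for $\bu^{k+1}$ via elliptic regularity. At each stage the coupling source $\int f^{k+1}(\bv - \bu^{k+1})\,d\bv$ and its spatial and temporal derivatives must be estimated in the matching norms, and this is where the weighted space plays its role: weighted Cauchy--Schwarz yields bounds of the type $\|\int f\, d\bv\|_{L^p_\bx} + \|\int f \bv\, d\bv\|_{L^p_\bx} \lesssim \|f\|_{H^1_\omega}$ throughout the required range of $p$.

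Once uniform bounds are in hand I would show that $(f^{k+1} - f^k,\ \rho^{k+1} - \rho^k,\ \bu^{k+1} - \bu^k)$ is Cauchy in a weaker norm (for instance $L^\infty_t L^2$-type in phase space for $f$, $L^\infty_t L^2_\bx$ for $\rho$, and $L^\infty_t L^2_\bx \cap L^2_t D^1_\bx$ for $\bu$) by subtracting consecutive linear equations and running basic energy estimates, possibly at the price of shrinking $T$. The iteration limit provides a strong solution to the $\delta$-regularized problem; sending $\delta \to 0$ using the $\delta$-uniform bounds and the Aubin--Lions compactness lemma then yields a strong solution to \eqref{eq-cs-ns} in the sense of Definition \ref{def-stro}. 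Uniqueness follows from an analogous energy estimate on the difference of two strong solutions. The main obstacle, in my view, is precisely the coupling term $\int f(\bv - \bu)\, d\bv$ appearing in the momentum equation: it must be controlled not only pointwise in time and space but also after one time derivative (for $\sqrt{\rho}\,\bu_t$) and in $L^q_\bx$ (for $\nabla^2 \bu$ in $L^2_t L^q_\bx$). Verifying that the weighted Sobolev norm on $f$, together with the characteristic propagation of the $\bv$-support, is strong enough to close all of these estimates simultaneously is the delicate technical core of the proof.
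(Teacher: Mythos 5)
Your proposal follows essentially the same route as the paper: linearize and iterate, derive $k$-uniform bounds in the higher-order norms on a short time interval (using the weighted space $H^1_\omega$ to control the coupling term, the characteristic analysis for the $\bv$-support, and the compatibility condition to bound $\sqrt{\rho}\,\bu_t$ at $t=0$), then prove the iterates are Cauchy in lower-order norms and pass to the limit. The only cosmetic differences are that the paper keeps the kinetic step nonlinear in $f^{n+1}$ (solving it via its standalone well-posedness result in the weighted space, Proposition \ref{prop-kine-cs-wp}) rather than freezing $L[f^k]$, and it absorbs the vacuum regularization $\rho_0+\delta$ into the cited linearized existence result instead of displaying the $\delta\to 0$ limit.
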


\begin{theorem}[Blowup criterion]\label{thm-blowup}
Under the conditions in Theorem \ref{thm-loc-exist}, assume $f(t,\bx,\bv)$ in $[0,T^*)\times\bbr^3 \times \bbr^3$ and $(\rho(t,\bx),\bu(t,\bx))$ in $[0,T^*)\times\bbr^3$ are the strong solutions to \eqref{eq-cs-ns}-\eqref{eq-sys-inidata} in the sense of Definition \ref{def-stro}. If the life span $T^*<\infty$, then
\[
 \lim_{T \to T^*}\left(|\rho(t,\bx)|_{L^{\infty}(0,T;L^{\infty})}+\int_0^T(|\bu(t)|_{L^{\infty}}+|\nabla \bu(t)|_{L^{\infty}}^2) dt \right)=\infty.
\]
\end{theorem}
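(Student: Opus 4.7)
The plan is a contradiction argument: assume the life span $T^*<\infty$ together with
\[
M := \sup_{t\in[0,T^*)}|\rho(t)|_{L^{\infty}} + \int_0^{T^*}\bigl(|\bu(t)|_{L^{\infty}} + |\nabla\bu(t)|_{L^{\infty}}^{2}\bigr)\,dt < \infty,
\]
and show that every norm appearing in Definition \ref{def-stro} stays bounded on $[0,T^*)$. Once this is done, the triple $(f,\rho,\bu)(T^*,\cdot)$ inherits enough regularity to serve as initial data for Theorem \ref{thm-loc-exist}, and the solution can be extended past $T^*$, contradicting the maximality of $T^*$. The a priori bound is derived in three blocks: kinetic, continuity, momentum.

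For the kinetic part I would first track the velocity support of $f$ along the characteristic ODE with drift $L[f]+\bu-\bv$. Since $|\varphi|\le 1$, the nonlocal term is majorised by the zeroth and first $\bv$-moments of $f$, both of which are dominated by $|f|_{L^{2}_{\omega}}$, while $|\bu|_{L^{\infty}}$ is integrable in time by hypothesis; a Gronwall argument then keeps $\mathrm{supp}_{\bv}f(t,\bx,\cdot)\subseteq B(R(t))$ with $R(t)\le R(T^*)<\infty$. With the support in hand, testing the Vlasov equation against $f\omega$, $(\partial_{x_{i}}f)\omega$ and $(\partial_{v_{i}}f)\omega$ and integrating by parts yields
\[
\frac{d}{dt}|f|_{H^{1}_{\omega}}^{2} \le C\bigl(1+|\nabla\bu|_{L^{\infty}}\bigr)|f|_{H^{1}_{\omega}}^{2},
\]
so that $|f|_{H^{1}_{\omega}}$ is controlled by $M$ alone; the weight $\omega$ is chosen precisely so that the commutators generated by $\nabla_{\bx}$ and $\nabla_{\bv}$ against the drift are absorbable.

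For the fluid part, applying $\partial_{i}$ to the continuity equation and doing the standard $L^{r}$ energy estimates for $r\in\{2,q\}$ bounds $|\nabla\rho|_{L^{r}}$ in terms of $|\nabla\bu|_{L^{\infty}}$ and $|\nabla^{2}\bu|_{L^{r}}$, yielding $\rho\in C([0,T^*);H^{1}\cap W^{1,q})$ provided $\nabla^{2}\bu\in L^{2}_{t}L^{q}_{\bx}$. For the momentum equation I would follow the Huang--Li--Xin hierarchy: multiplying by $\bu$, by $\bu_{t}$, and then differentiating in $t$ and testing against $\bu_{t}$, to control $|\nabla\bu|_{L^{2}}$, $|\sqrt{\rho}\bu_{t}|_{L^{2}}$ and $\int_{0}^{T}|\nabla\bu_{t}|_{L^{2}}^{2}dt$, closing the $W^{2,q}$ estimate through the Lam\'e $L^{p}$-theory applied to the elliptic part of the momentum equation. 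The coupling forcing $\int_{\bbr^{3}}f(\bv-\bu)d\bv$ and its time derivative (computed from the Vlasov equation after integration by parts in $\bv$) enter only as source terms; both are dominated in the relevant $L^{p}_{\bx}$ by $(1+|\bu|_{L^{6}})|f|_{H^{1}_{\omega}}$, which is already under control. The time integrability of $|\nabla\bu|_{L^{\infty}}$ needed to start the Gronwall loops is recovered from the Beal--Kato--Majda logarithmic inequality
\[
|\nabla\bu|_{L^{\infty}} \le C\bigl(|\mathrm{div}\,\bu|_{L^{\infty}} + |\mathrm{curl}\,\bu|_{L^{\infty}}\bigr)\log\bigl(e+|\nabla^{2}\bu|_{L^{q}}\bigr) + C|\nabla\bu|_{L^{2}} + C,
\]
where $|\mathrm{div}\,\bu|_{L^{\infty}}$ and $|\mathrm{curl}\,\bu|_{L^{\infty}}$ are controlled by the effective viscous flux together with the vorticity, again in the style of Huang--Li--Xin.

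The main obstacle will be handling the coupling source in the higher-order momentum estimates: every additional derivative in $t$ or $\bx$ produces moments of derivatives of $f$, and one cannot afford a derivative loss because the Vlasov equation only propagates $H^{1}_{\omega}$. The design of $\omega(\bx,\bv)=(1+\bv^{2})^{2+\beta}(1+\bx^{2}+\bv^{2})^{\alpha}$ with $\alpha>3$ and $\beta>\tfrac12$ is exactly what guarantees that every moment appearing in these estimates is dominated by $|f|_{H^{1}_{\omega}}$ via Cauchy--Schwarz, so that the analysis closes without touching second $\bv$-derivatives of $f$. Combining this weighted bookkeeping with the logarithmic Beal--Kato--Majda inequality, which sidesteps any need for $|\nabla^{2}\bu|_{L^{\infty}}$ in the presence of possible vacuum, is the crux of the argument.
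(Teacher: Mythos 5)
Your overall architecture --- contradiction, uniform a priori bounds on every norm in Definition \ref{def-stro}, then extension via Theorem \ref{thm-loc-exist} --- is exactly the paper's, and your fluid block (transport estimates for $\nabla\rho$, the energy hierarchy for the momentum equation, elliptic estimates for $\nabla^{2}\bu$) matches the paper's in substance. The genuine gap is in your kinetic block. You propose to close the Gronwall argument for the velocity support $R(t)$ by dominating the zeroth and first $\bv$-moments of $f$ (which drive the characteristic ODE through $L[f]$) by $|f|_{L^{2}_{\omega}}$. But $|f|_{L^{2}_{\omega}}$ is itself controlled only through a differential inequality whose coefficient contains the moment $|f(1+\bv^{2})^{\frac12}|_{L^{1}}$: if you bound that moment again by $|f|_{L^{2}_{\omega}}$ you obtain $\frac{d}{dt}|f|_{L^{2}_{\omega}}^{2}\lesssim |f|_{L^{2}_{\omega}}^{2}+|f|_{L^{2}_{\omega}}^{3}$, a Riccati-type inequality that need not persist up to an arbitrary finite $T^{*}$; if instead you bound it via the support, you need $R(t)$ first, and the loop is circular. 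The paper breaks this loop with the elementary energy identity (Lemma \ref{lm-ele-enerest}): the dissipative structure of the coupled system gives $\int f\bv^{2}\,d\bx\,d\bv\le 2E_{0}$ uniformly on $[0,T^{*})$, whence $|\mathbf{b}(t,\bx)|\le |f_{0}|_{L^{1}}^{1/2}(2E_{0})^{1/2}$ with no reference to $R$ or to weighted norms; only then is $R(T^{*})$ finite, and only then does the \emph{linear} Gronwall inequality of Proposition \ref{prop-kine-cs-wp}~(2) close. This conservation-law input is not optional bookkeeping; without it the kinetic bounds do not reach $T^{*}$.

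Two further remarks. First, the Beale--Kato--Majda logarithmic inequality is superfluous here: unlike the Huang--Li--Xin criteria you are emulating, the quantity assumed finite in this theorem already contains $\int_{0}^{T^{*}}|\nabla\bu(t)|_{L^{\infty}}^{2}\,dt$, so the time integrability of $|\nabla\bu|_{L^{\infty}}$ is a hypothesis rather than something to be recovered, and the paper never invokes the log inequality or the effective viscous flux. Second, to restart the solution at $T^{*}$ you must verify the compatibility condition of Theorem \ref{thm-loc-exist}, not merely the regularity of $(f,\rho,\bu)(T^{*})$; since vacuum is allowed, the paper does this by running the higher-order momentum estimate on the material derivative $\dot{\bu}=\bu_{t}+\bu\cdot\nabla\bu$ (rather than on $\bu_{t}$), obtaining $\rho^{1/2}\dot{\bu}\in L^{\infty}(0,T^{*};L^{2})$ and $\nabla\dot{\bu}\in L^{2}(0,T^{*};L^{2})$, showing $\rho\dot{\bu}$ is continuous in $L^{2}$ up to $T^{*}$, and only then defining $g(T^{*})$ from $\rho\dot{\bu}(T^{*})$ on the set where $\rho(T^{*})>0$. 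This step, and the choice of $\dot{\bu}$ over $\bu_{t}$, should be made explicit in your write-up.
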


The rest of the paper is organized as follows. In Section 2, we present a detailed analysis on the kinetic Cucker--Smale model in the weighted Sobolev space. In Section 3, we construct the local strong solution to the coupled system. Section 4 is devoted to the investigation on blowup mechanisms for the strong solutions to the coupled model. In the last section, we summarize the paper and make a comment on it.
\vskip 0.3cm
\noindent\textbf{Notation}. Throughout the paper, $C$ represents a general positive constant that may depend on $\mu$, $\lambda$, $\gamma$, $\varphi$, $\varphi'$ and the initial data. We write $C(*)$ to emphasize that $C$ additionally depends on *. Both $C$ and $C(*)$ may differ from line to line. $\nabla$ and $\partial_i$ are abbreviated for $\nabla_{\bx}$ and $\partial_{x_i}$, $1\le i \le 3$, respectively. The Einstein summation convention is also used in the paper.

%%%%%%%%%%%%%%%%%%%%%%%%%%%%%%%%%%%%%%%%%%%%%%%%%%%%%%%%%%%%%%%%%%%%%%%%%%%%%%%%%%%%%%%%%%%%%%%%%%%%%%%%%%%%%%%
%
%                     Sect. 2  Well-posedness of the Kinetic Cucker-Smale Model
%
%%%%%%%%%%%%%%%%%%%%%%%%%%%%%%%%%%%%%%%%%%%%%%%%%%%%%%%%%%%%%%%%%%%%%%%%%%%%%%%%%%%%%%%%%%%%%%%%%%%%%%%%%%%%%%%
\section{Well-posedness of the Kinetic Cucker-Smale Model}\label{sec-kine-cs}
\setcounter{equation}{0}
In this section, we study the well-posedness of the kinetic Cucker--Smale  model $\eqref{eq-cs-ns}_1$ in the weighted Sobolev space. Consider
\begin{equation} \label{eq-kine-cs}
     \begin{dcases}
         f_t + \bv \cdot \nabla_{\bx} f+ \nabla_{\bv} \cdot (L[f]f+(\bu-\bv)f)=0,\\
         f|_{t=0}=f_0(\bx,\bv),
     \end{dcases}
\end{equation}
for given $\bu(t,\bx)\in C([0,T];D^1\cap D^{2}(\bbr^3))\cap L^2(0,T;D^{2,q}(\bbr^3))$, $3<q\le 6$.
Define the bound of $\bv$-support of $f(t,\bx, \bv)$ at the time $t$ as
\[
  R(t):=\sup \Big\{|\bv|: \ (\bx,\bv) \in \text{supp} f(t,\cdot,\cdot)\Big\}.
\]
The result in this section is summarized as follows.
\begin{proposition}\label{prop-kine-cs-wp}
Let $R_0>0$, $T>0$. Assume $0\le f_0(\bx,\bv) \in H_{\omega}^1(\bbr^3 \times \bbr^3)$, and $\text{supp}_{\bv}f_0(\bx,\cdot)\subseteq B(R_0)$ for all $\bx \in \bbr^3$. Given $\bu(t,\bx)\in C([0,T];D^1\cap D^{2}(\bbr^3))\cap L^2(0,T;D^{2,q}(\bbr^3))$, $3<q\le 6$, there exists a unique non-negative strong solution $f(t,\bx,\bv)\in C([0,T];H_{\omega}^1(\bbr^3 \times \bbr^3))$ to \ref{eq-kine-cs}. Moreover,
\[
 \begin{aligned}
   &(1)\ R(t)\le R_0+\bigg[|f_0|_{L^1}\Big(R_0+C\sup_{0\le t \le T}|\bu(t)|_{D^1 \cap D^2}\Big)+C\sup_{0\le t \le T}|\bu(t)|_{D^1 \cap D^2}\bigg]t,\quad 0\le t \le T ;\\
   &(2)\ |f(t)|_{H_{\omega}^1}\le |f_0|_{H_{\omega}^1}\exp \left( C\int_0^t(1+R(\tau)+|\bu(\tau)|_{L^{\infty}}+|\nabla\bu(\tau)|_{L^{\infty}})d \tau \right), \quad 0\le t \le T;\\
   &(3)\ \sup_{0\le t \le T}|f(t)-\tilde{f}(t)|_{L^2_{\omega}}\le \left(|f_0-\tilde{f}_0|_{L^2_{\omega}}+\sup_{0\le t \le T}|\bu (t)-\tilde{\bu}(t)|_{D^1 \cap D^2}\int_0^T C|\nabla_{\bv}\tilde{f}(t)|_{L^2_{\omega}}dt\right)\\
    &\qquad \qquad \qquad \qquad \qquad  \times \exp \left( \int_0^T C(1+|\bu(t)|_{D^1 \cap D^2}+R(t)|\tilde{f}(t)|_{H_{\omega}^1})dt \right),
 \end{aligned}
\]
where $\tilde{f}(t, \bx, \bv)$ is the strong solution to \eqref{eq-kine-cs}, with $\bu$ and $f_0$ replaced by $\tilde{\bu}$ and $\tilde{f}_0$, respectively.
\end{proposition}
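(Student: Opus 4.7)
The plan is to establish Proposition \ref{prop-kine-cs-wp} by a Picard-type iteration together with a priori energy estimates in $H^1_\omega$. First I would linearize the equation by freezing the nonlocal functional: given a sequence of approximations $f^{(n)}$, define $f^{(n+1)}$ as the solution of the linear transport equation
\begin{equation*}
  f^{(n+1)}_t + \bv \cdot \nabla_{\bx} f^{(n+1)} + \nabla_{\bv}\cdot\bigl(L[f^{(n)}]\, f^{(n+1)} + (\bu - \bv)f^{(n+1)}\bigr) = 0, \quad f^{(n+1)}|_{t=0}=f_0.
\end{equation*}
Since $L[f^{(n)}]$ is prescribed, this linear equation is solvable by the method of characteristics along the ODE system $\dot{\bx} = \bv$, $\dot{\bv} = L[f^{(n)}](t,\bx,\bv) + \bu(t,\bx) - \bv$. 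Non-negativity propagates along characteristics, and $|f^{(n+1)}|_{L^1} = |f_0|_{L^1}$ follows from the divergence structure of the equation.

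For the support bound (1), I would work along characteristics: $|\dot{\bv}| \le |L[f^{(n)}]| + |\bu|_{L^\infty} + |\bv|$. Using $|\varphi|\le 1$ and the conservation $|f^{(n)}|_{L^1} = |f_0|_{L^1}$, one bounds $|L[f^{(n)}]|(t,\bx,\bv) \le |f_0|_{L^1}(R_n(t) + |\bv|)$. Since $|\bu(t)|_{L^\infty}\le C|\bu(t)|_{D^1\cap D^2}$ by Sobolev embedding, a continuity/bootstrap argument --- combined with the stabilizing observation that at the maximum of $|\bv|$ one has $\bv\cdot L[f^{(n)}]\le 0$ because $\varphi,f^{(n)}\ge 0$ and $\bv\cdot\bv^*\le|\bv|^2$ --- closes the induction and produces the claimed linear-in-$t$ bound in the limit.

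For the $H^1_\omega$ estimate (2), I would multiply $\eqref{eq-kine-cs}_1$ by $f\omega$ and integrate over $\bbr^3\times\bbr^3$, and separately pair the $\nabla_\bx$ and $\nabla_\bv$ derivatives of the equation with $(\nabla_\bx f)\omega$ and $(\nabla_\bv f)\omega$. The transport term $\bv\cdot\nabla_\bx f$ contributes through $\bv\cdot\nabla_\bx\omega$, absorbable by $|f|^2_{L^2_\omega}$ thanks to the explicit form of $\omega$; the $(\bu-\bv)$-drift produces $-3f$ together with $(\bu-\bv)\cdot\nabla_\bv\omega$, both controlled using $\alpha>3$ and $\beta>1/2$; and the nonlocal term $L[f]$ is bounded pointwise on the support by $CR(t)|f_0|_{L^1}$. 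After differentiation, commutators such as $\nabla_\bx\bu\cdot\nabla_\bv f$ and $\nabla_\bv L[f] = -\bigl(\int\varphi f\,d\by d\bv^*\bigr)\mathrm{Id}$ appear and are controlled by $|\nabla\bu|_{L^\infty}$ and $R(t)$, respectively. Summing and applying Gronwall yields (2).

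For stability (3), I would set $F = f - \tilde{f}$ and subtract the two equations to obtain
\begin{equation*}
  F_t + \bv\cdot\nabla_\bx F + \nabla_\bv\cdot\bigl(L[f]F + (L[f]-L[\tilde{f}])\tilde{f} + (\bu-\bv)F + (\bu-\tilde{\bu})\tilde{f}\bigr)=0.
\end{equation*}
Pairing with $F\omega$ and integrating, the same calculations as in (2) control the homogeneous pieces, while the inhomogeneous terms contribute factors of $|L[f]-L[\tilde{f}]|\le CR(t)|F|_{L^2_\omega}$ and $|\bu-\tilde{\bu}|_{L^\infty}\le C|\bu-\tilde{\bu}|_{D^1\cap D^2}$, each multiplied by $|\nabla_\bv\tilde{f}|_{L^2_\omega}$ after integration by parts in $\bv$. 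Gronwall produces (3), which also shows that the iteration is a contraction in $L^2_\omega$ on a short interval, yielding existence; uniqueness is immediate. The main obstacle is the delicate weighted book-keeping in (2): every term produced by differentiation --- especially those involving $\nabla_\bx L[f]$ (which brings in $\varphi'$ convolved against $f$) and $\nabla_\bv L[f]$ --- must be absorbed into $|f|_{H^1_\omega}$, and this is precisely why the weight $\omega = (1+\bv^2)^{2+\beta}(1+\bx^2+\bv^2)^\alpha$ with $\alpha > 3$ and $\beta > 1/2$ is chosen: these exponents guarantee that all arising velocity moments are finite and that $\nabla_\bx\omega$ and $\nabla_\bv\omega$ are dominated by $\omega$ itself up to integrable factors.
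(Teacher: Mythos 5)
Your architecture (Picard iteration on the linearized equation with $L[f^{(n)}]$ frozen, characteristics for positivity and the support bound, weighted energy identities for (2) and (3)) differs from the paper's, which instead mollifies $f_0$ \emph{and} $\bu$, solves the full nonlinear problem for smooth data, and extracts the limit by showing the family is Cauchy in $C([0,T];L^2_\omega)$ via estimate (3). Your estimates for (2) and (3) match the paper's Lemma \ref{lm-kine-cs-apriori} in all essentials, and your sign observation $\bv\cdot L[f]\le 0$ at maximal speed would give an even cleaner (1) than the paper's route through the second-moment bound on $\mathbf{b}$ and the integrating-factor formula for $V$ --- although in the iteration $L[f^{(n)}]$ sees the support of $f^{(n)}$, not of $f^{(n+1)}$, so the sign argument does not apply verbatim and you must propagate a uniform-in-$n$ support bound through the induction, which you only gesture at.

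The genuine gap is the claim $f\in C([0,T];H^1_{\omega})$. Your contraction is in $L^2_\omega$ and your uniform bounds are in $H^1_\omega$; together these yield only $f\in C([0,T];L^2_\omega)\cap L^\infty(0,T;H^1_\omega)$ and hence weak continuity in time of $\nabla_{\bx}f$ and $\nabla_{\bv}f$ in $L^2_\omega$. Upgrading to strong continuity is the hardest step of the paper's proof of Proposition \ref{prop-kine-cs-wp}: one convolves $\omega^{\frac12}\nabla_{\bx}f$ with a Friedrichs mollifier $j_\e$, writes the transport equation it satisfies, and controls six commutator terms (the terms $H_1$--$H_6$ in the paper, in the spirit of DiPerna--Lions) to obtain $\bigl||\nabla_{\bx}f(t_2)|^2_{L^2_\omega}-|\nabla_{\bx}f(t_1)|^2_{L^2_\omega}\bigr|\le C(T)\bigl(|t_2-t_1|+|t_2-t_1|^{\frac12}\bigr)$; norm continuity plus weak continuity then gives continuity in $L^2_\omega$. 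This cannot be skipped: the drift $\bu-\bv$ has $|\nabla\bu(t)|_{L^\infty}$ only square-integrable in time and the limit $f$ is not smooth, so the chain-rule manipulations behind your energy identities in (2) and (3) are merely formal for the constructed solution until this commutator argument (or an equivalent regularization of $\bu$ and of the solution) justifies them.
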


In order to prove Proposition \ref{prop-kine-cs-wp}, we need the following a priori estimates.
\subsection{A priori estimates}
\begin{lemma}\label{lm-kine-cs-apriori}
Let $R_0>0$, $T>0$. Assume $0\le f_0(\bx,\bv) \in H_{\omega}^1(\bbr^3 \times \bbr^3)$, and $\text{supp}_{\bv}f_0(\bx,\cdot)\subseteq B(R_0)$ for all $\bx \in \bbr^3$. Given $\bu(t,\bx)\in C([0,T];D^1\cap D^{2}(\bbr^3))\cap L^2(0,T;D^{2,q}(\bbr^3))$, $3<q\le 6$, if $f(t,\bx,\bv)$ is a smooth solution to \ref{eq-kine-cs}, then
\[
 \begin{aligned}
   &(1)\ R(t)\le R_0+\bigg[|f_0|_{L^1}\Big(R_0+C\sup_{0\le t \le T}|\bu(t)|_{D^1 \cap D^2}\Big)+C\sup_{0\le t \le T}|\bu(t)|_{D^1 \cap D^2}\bigg]t,\quad 0\le t \le T ;\\
   &(2)\ |f(t)|_{H_{\omega}^1}\le |f_0|_{H_{\omega}^1}\exp \left( C\int_0^t(1+R(\tau)+|\bu(\tau)|_{L^{\infty}}+|\nabla\bu(\tau)|_{L^{\infty}})d \tau \right), \quad 0\le t \le T;\\
   &(3)\ \sup_{0\le t \le T}|f(t)-\tilde{f}(t)|_{L^2_{\omega}}\le \left(|f_0-\tilde{f}_0|_{L^2_{\omega}}+\sup_{0\le t \le T}|\bu (t)-\tilde{\bu}(t)|_{D^1 \cap D^2}\int_0^T C|\nabla_{\bv}\tilde{f}(t)|_{L^2_{\omega}}dt\right)\\
    &\qquad \qquad \qquad \qquad \qquad  \times \exp \left( \int_0^T C(1+|\bu(t)|_{D^1 \cap D^2}+R(t)|\tilde{f}(t)|_{H_{\omega}^1})dt \right),
 \end{aligned}
\]
where $\tilde{f}(t, \bx, \bv)$ is the smooth solution to \eqref{eq-kine-cs}, with $\bu$ and $f_0$ replaced by $\tilde{\bu}$ and $\tilde{f}_0$, respectively.
\end{lemma}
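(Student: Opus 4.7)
I treat the three estimates separately.

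For (1), I would work along characteristics $(\bX(t),\bV(t))$ solving $\dot\bX=\bV$ and $\dot\bV=L[f](t,\bX,\bV)+\bu(t,\bX)-\bV$. Integrating $\eqref{eq-kine-cs}_1$ over $(\bx,\bv)$ gives mass conservation $|f(t)|_{L^1}=|f_0|_{L^1}$. On $\mathrm{supp}\,f$ one has $|\bv|,|\bv^*|\le R(t)$, so $|\varphi|\le 1$ yields $|L[f](t,\bx,\bv)|\le 2R(t)|f_0|_{L^1}$. Combined with the embedding $|\bu|_{L^\infty}\le C|\bu|_{D^1\cap D^2}$ in $\bbr^3$ (via $D^1\hookrightarrow L^6$ and $D^2\hookrightarrow \dot W^{1,6}$), the $\bV$-ODE integrates to $|\bV(t)|\le R_0+\int_0^t(2|f_0|_{L^1}R(s)+C|\bu|_{D^1\cap D^2})\,ds$. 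Taking the supremum over characteristics starting in $B(R_0)$ and applying Gronwall produces the stated linear-in-$t$ bound on $[0,T]$.

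For (2), the plan is to carry out weighted $L^2_\omega$ energy estimates for $f$, $\nabla_\bx f$ and $\nabla_\bv f$, and sum them. Testing $\eqref{eq-kine-cs}_1$ against $f\omega$, the transport piece yields $-\tfrac12\int f^2\,\bv\cdot\nabla_\bx\omega$, with $|\bv\cdot\nabla_\bx\omega|\le C\omega$ by direct computation from the chosen $\omega$; the $L[f]$ piece, after integration by parts, is controlled by $|L[f]|,|\nabla_\bv\cdot L[f]|\le CR(t)$ on $\mathrm{supp}\,f$ together with $|\bv\cdot\nabla_\bv\omega|\le C\omega$; and the $(\bu-\bv)f$ piece gives a factor $|\bu|_{L^\infty}$ on $|f|^2_{L^2_\omega}$ plus a favorable damping from $-\bv$. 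Commuting $\nabla_\bx$ and $\nabla_\bv$ with the equation generates new commutators: $[\nabla_\bv,\bv\cdot\nabla_\bx]f=\nabla_\bx f$, a $(\nabla\bu)(\nabla_\bv f)$ term from the drift, and spatial/velocity derivatives of $L[f]$ bounded using $|\varphi'|\le 1$ and $R(t)$. Combining these bounds I expect
\[
\frac{d}{dt}|f|^2_{H^1_\omega}\le C\bigl(1+R(t)+|\bu|_{L^\infty}+|\nabla\bu|_{L^\infty}\bigr)|f|^2_{H^1_\omega},
\]
and (2) follows from Gronwall.

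For (3), I set $g:=f-\tilde f$ and subtract the two equations. Using the bilinearity of $L$ together with $(\bu-\bv)f-(\tilde\bu-\bv)\tilde f=(\bu-\bv)g+(\bu-\tilde\bu)\tilde f$, I obtain
\[
g_t+\bv\cdot\nabla_\bx g+\nabla_\bv\cdot\bigl(L[f]g+L[g]\tilde f+(\bu-\bv)g+(\bu-\tilde\bu)\tilde f\bigr)=0.
\]
An $L^2_\omega$ estimate on $g$ handles $L[f]g$ and $(\bu-\bv)g$ exactly as in (2). The essential new term is $\nabla_\bv\cdot(L[g]\tilde f)$; here I rely on the fact that the exponents $\alpha>3$, $\beta>1/2$ in $\omega$ are chosen precisely so that $\omega^{-1}\in L^1(\bbr^6)$. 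Cauchy--Schwarz then yields $|g|_{L^1}\le C|g|_{L^2_\omega}$, hence $|L[g]|_{L^\infty}\le CR(t)|g|_{L^2_\omega}$; after distributing $\nabla_\bv$ and pairing against $\tilde f$ and $g\omega$, this contributes at most $CR(t)|\tilde f|_{H^1_\omega}|g|^2_{L^2_\omega}$. The forcing $(\bu-\tilde\bu)\tilde f$ becomes $(\bu-\tilde\bu)\cdot\nabla_\bv\tilde f$, bounded by $C|\bu-\tilde\bu|_{D^1\cap D^2}|\nabla_\bv\tilde f|_{L^2_\omega}|g|_{L^2_\omega}$. Gronwall then closes (3).

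The main obstacle will be the cross term $\nabla_\bv\cdot(L[g]\tilde f)$ in (3): the need to close this term by $|g|_{L^2_\omega}$ alone is exactly what forces the weight exponents $\alpha>3$, $\beta>1/2$ (so $\omega^{-1}$ is integrable on $\bbr^6$). Everything else is careful but standard weighted energy bookkeeping, whose soundness rests on the pointwise domination $|\bv\cdot\nabla_\bx\omega|,\,|\bv\cdot\nabla_\bv\omega|\le C\omega$ and $|\bu\cdot\nabla_\bv\omega|\le C|\bu|\omega$, which follow directly from the explicit form of $\omega$.
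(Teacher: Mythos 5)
Your plans for parts (2) and (3) coincide with the paper's proof: the same weighted energy estimates on $f$, $\nabla_{\bx}f$, $\nabla_{\bv}f$ with the commutator terms you list, and for (3) the same decomposition $L[f]f-L[\tilde f]\tilde f=L[f]\overline f+L[\overline f]\tilde f$ with the cross term closed via $|\overline f|_{L^1}+|\overline f(1+\bv^2)^{1/2}|_{L^1}\le C|\overline f|_{L^2_{\omega}}$, which is exactly how the paper uses the integrability of $\omega^{-1}$. I see no problem there.

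Part (1), however, has a genuine gap: the argument you describe does not prove the stated estimate. From $|L[f]|\le 2R(t)|f_0|_{L^1}$ you arrive at $R(t)\le R_0+\int_0^t\bigl(2|f_0|_{L^1}R(s)+C\sup_{\tau}|\bu(\tau)|_{D^1\cap D^2}\bigr)ds$, and Gronwall applied to this yields $R(t)\le (R_0+Ct)e^{2|f_0|_{L^1}t}$ --- an \emph{exponential} bound, not the claimed bound $R(t)\le R_0+[\,\cdots]\,t$ which equals $R_0$ at $t=0$ and grows linearly with an explicit slope. The paper gets the linear bound from two structural ingredients you have discarded by taking absolute values inside $L[f]$. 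First, multiplying the equation by $\bv^2$ and integrating, the alignment term symmetrizes to $-\int\varphi f f^*(\bv^*-\bv)^2\le 0$ and the drift contributes $-2\int f\bv^2$, so the kinetic energy satisfies a dissipative Gronwall inequality giving the \emph{time-uniform} bound $\bigl(\int f\bv^2\bigr)^{1/2}\le |f_0|_{L^1}^{1/2}\bigl(R_0+C\sup_t|\bu(t)|_{D^1\cap D^2}\bigr)$, hence $|\mathbf b(t,\bx)|\le |f_0|_{L^1}\bigl(R_0+C\sup_t|\bu(t)|_{D^1\cap D^2}\bigr)$ with $\mathbf b=\int\varphi f^*\bv^*$. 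Second, the characteristic ODE is written as $\dot V=-(1+a)V+\mathbf b+\bu$ with $a\ge 0$, and the explicit integrating-factor formula together with $e^{-A(t)}\int_0^t e^{A(\tau)}d\tau\le t$ (since $A$ is nondecreasing) turns the damping into the linear-in-$t$ growth. Without the moment estimate and without exploiting the sign of the $-(1+a)V$ term, the stated inequality is out of reach.
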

\begin{proof}
(1)~From $f_0 \in H^1_{\omega}$, we deduce that
\begin{equation} \label{eq-ini-f-lonenm}
 \begin{aligned}
 |f_0|_{L^1}&=\int_{\bbr^6}f_0(\bx,\bv)\omega^{\frac12}(\bx,\bv)\omega^{-\frac12}(\bx,\bv)d\bx d\bv\\
   &\le |\omega^{-\frac12}|_{L^2}|f_0|_{L^2_{\omega}}\le C|f_0|_{L^2_{\omega}}.
 \end{aligned}
\end{equation}
Integrating $\eqref{eq-kine-cs}_1$ over $[0,t]\times \bbr^3 \times \bbr^3$ $(0<t\le T)$ gives
\begin{equation} \label{eq-kin-conser-mass}
 |f(t)|_{L^1}=|f_0|_{L^1}.
\end{equation}
Denote by $(X(t;\bx_0,\bv_0),V(t;\bx_0,\bv_0))$ the characteristic issuing from $(\bx_0,\bv_0)$. It satisfies
\begin{equation} \label{eq-charac}
\begin{dcases}
  \frac{d X}{d t}=V, \\
  \frac{d V}{d t}=\int_{\bbr^{2d}} \varphi(|X-\by|)f(t, \by,\bv^*)(\bv^*-V)d \by d \bv^*.
\end{dcases}
\end{equation}
Define
\[
 a(t,\bx):=\int_{\bbr^{2d}} \varphi(|\bx-\by|)f(t, \by,\bv^*) d \by d \bv^*,
\]
\[
 \mathbf{b} (t,\bx):=\int_{\bbr^{2d}} \varphi(|\bx-\by|)f(t, \by,\bv^*) \bv^* d \by d \bv^*.
\]
Solving the equation \eqref{eq-kine-cs} by the method of characteristics gives
\begin{equation} \label{eq-kin-cs-positive}
 f(t,X(t;\bx_0,\bv_0),V(t;\bx_0,\bv_0))=f_0(\bx_0,\bv_0)\exp \left(3 \int_0^t [1+a(\tau,X(\tau))] d \tau \right)\ge 0.
\end{equation}
Multiplying $\eqref{eq-kine-cs}_1$ by $\bv^2$, we obtain
\begin{equation} \label{eq-kin-cs-ener}
  \frac{\partial}{\partial t} (f \bv^2)+ \bv \cdot \nabla_{\bx}(f \bv^2)+\nabla_{\bv} \cdot \Big(L[f]f \bv^2+(\bu-\bv)f \bv^2\Big)=2fL[f]\cdot \bv+2f \bv \cdot (\bu-\bv).
\end{equation}
Using \eqref{eq-kin-cs-positive} and integrating \eqref{eq-kin-cs-ener} over $\bbr^3 \times \bbr^3$ lead to
\[
 \begin{aligned}
   \frac{d}{dt}\int_{\bbr^6}f\bv^2 d\bx d\bv=&-\int_{\bbr^{12}}\varphi(|\bx-\by|)f(t,\bx,\bv)f(t,\by,\bv^*)(\bv^*-\bv)^2d\by d\bv^* d\bx d\bv\\
     &-2\int_{\bbr^6}f\bv^2 d\bx d\bv+2\int_{\bbr^3}\int_{\bbr^3}f \bv d\bv \cdot \bu d\bx\\
     \le&-2\int_{\bbr^6}f\bv^2 d\bx d\bv+2|\bu(t)|_{L^{\infty}}|f|_{L^1}^{\frac12}\Bigg(\int_{\bbr^6}f\bv^2 d\bx d\bv\Bigg)^{\frac12}.
 \end{aligned}
\]
Solving the above Gronwall's inequality yields
\begin{equation}\label{eq-cs-twmonieq}
 \Bigg(\int_{\bbr^6}f(t, \bx, \bv)\bv^2 d\bx d\bv\Bigg)^{\frac12}\le |f_0|_{L^1}^{\frac12}\bigg(R_0+C\sup_{0\le t \le T}|\bu(t)|_{D^1 \cap D^2}\bigg), \quad 0\le t \le T,
\end{equation}
where we have employed \eqref{eq-kin-conser-mass} and the following Sobolev inequality
\begin{equation} \label{eq-sobol-infty}
 |\bu(t)|_{L^{\infty}} \le C|\bu(t)|_{D^1 \cap D^2} \quad \text{ in $\bbr^3$}.
\end{equation}
It follows from the characteristic equation \eqref{eq-charac} that
\begin{equation} \label{eq-exnV}
 \begin{aligned}
 V(t)=& V_0 e^{-\int_0^t [1+a(\tau,X(\tau))] d \tau}\\
 &+e^{-\int_0^t  [1+a(\tau,X(\tau))] d \tau }\int_0^t [\mathbf{b}(\tau,X(\tau))+\bu(\tau,X(\tau))] e^{\int_0^{\tau} [1+a(s,X(s))] ds} d \tau.
 \end{aligned}
\end{equation}
Using Cauchy's inequality, we have by \eqref{eq-kin-conser-mass} and \eqref{eq-cs-twmonieq}
\begin{equation}
 \begin{aligned}
   |\mathbf{b}(t,\bx)|\le & \left(\int_{\bbr^{2d}} f(t, \by,\bv^*) d \by d \bv^* \right)^{\frac12}\left(\int_{\bbr^{2d}} f(t, \by,\bv^*) |\bv^*|^2 d \by d \bv^* \right)^{\frac12}\\
   \le &|f_0|_{L^1}\bigg(R_0+C\sup_{0\le t \le T}|\bu(t)|_{D^1 \cap D^2}\bigg), \quad 0\le t \le T.
 \end{aligned}
\end{equation}
This together with \eqref{eq-sobol-infty} and \eqref{eq-exnV} gives
\[
 R(t)\le R_0+\bigg[|f_0|_{L^1}\Big(R_0+C\sup_{0\le t \le T}|\bu(t)|_{D^1 \cap D^2}\Big)+C\sup_{0\le t \le T}|\bu(t)|_{D^1 \cap D^2}\bigg]t,\quad 0\le t \le T.
\]
(2)~Multiplying $\eqref{eq-kine-cs}_1$ by $2f\omega$, we obtain
\begin{equation}\label{eq-zerowt-dif}
  \begin{gathered}
   \frac{\partial}{\partial t}(f^2\omega)+\bv \cdot \nabla_{\bx}(f^2\omega)+\nabla_{\bv}\cdot \Big(L[f]f^2\omega+(\bu-\bv)f^2\omega \Big)=\bv \cdot \nabla_{\bx}\omega f^2\\+L[f]\cdot \nabla_{\bv}\omega f^2 -\nabla_{\bv}\cdot L[f] f^2\omega +(\bu-\bv)\cdot \nabla_{\bv}\omega f^2 +3 f^2\omega.
  \end{gathered}
\end{equation}
Integrating \eqref{eq-zerowt-dif} over $\bbr^3 \times \bbr^3$ yields
\begin{equation}\label{eq-zerowt-gron}
  \begin{aligned}
    \frac{d}{d t}|f(t)|_{L^2_{\omega}}^2 \le C|f(t)|_{L^2_{\omega}}^2+C|\bu(t)|_{L^{\infty}}|f(t)|_{L^2_{\omega}}^2+C|f(t)(1+\bv^2)^{\frac12}|_{L^1}
    |f(t)|_{L^2_{\omega}}^2.
  \end{aligned}
\end{equation}
Applying $\nabla_{\bx}$ to $\eqref{eq-kine-cs}_1$, we deduce that
\begin{equation}\label{eq-fistx-dif}
  \begin{gathered}
  (\nabla_{\bx}f)_t+\bv \cdot \nabla_{\bx}\nabla_{\bx}f+\nabla_{\bv}\cdot \Big(L[f]\otimes \nabla_{\bx}f+(\bu-\bv)\otimes \nabla_{\bx}f \Big)\\
     +\nabla_{\bx}\nabla_{\bv}\cdot L[f]f+ \nabla_{\bx}L[f]\cdot \nabla_{\bv}f+\nabla \bu \cdot \nabla_{\bv}f=0.
  \end{gathered}
\end{equation}
Multiplying \eqref{eq-fistx-dif} by $2\omega \nabla_{\bx}f$, we have
\begin{equation}\label{eq-fistxwt-dif}
  \begin{aligned}
    &(|\nabla_{\bx}f|^2\omega)_t+\bv \cdot \nabla_{\bx}(|\nabla_{\bx}f|^2\omega)+\nabla_{\bv}\cdot \Big(L[f]|\nabla_{\bx}f|^2\omega +(\bu-\bv)|\nabla_{\bx}f|^2\omega \Big)\\
    =&\bv \cdot \nabla_{\bx}\omega |\nabla_{\bx}f|^2-2\omega \nabla_{\bx}f \cdot \nabla_{\bx}L[f]\cdot \nabla_{\bv}f+L[f] \cdot \nabla_{\bv}\omega |\nabla_{\bx}f|^2\\
    &-\omega |\nabla_{\bx}f|^2\nabla_{\bv}\cdot L[f]-2 \omega f \nabla_{\bx}f \cdot \nabla_{\bx}\nabla_{\bv}\cdot L[f]-2 \omega \nabla_{\bx}f \cdot \nabla \bu \cdot \nabla_{\bv}f\\
    &+(\bu-\bv) \cdot \nabla_{\bv}\omega |\nabla_{\bx}f|^2+3|\nabla_{\bx}f|^2\omega.
  \end{aligned}
\end{equation}
Integrating \eqref{eq-fistxwt-dif} over $\bbr^3 \times \bbr^3$ leads to
\begin{equation}\label{eq-fistxwt-gron}
  \begin{aligned}
    \frac{d}{dt}|\nabla_{\bx}f(t)|_{L^2_{\omega}}^2 \le & C|\nabla_{\bx}f(t)|_{L^2_{\omega}}^2 +CR(t)|f(t)|_{L^1}|\nabla_{\bx}f(t)|_{L^2_{\omega}} |\nabla_{\bv}f(t)|_{L^2_{\omega}}\\
    &+C|f(t)(1+\bv^2)^{\frac12}|_{L^1}|\nabla_{\bx}f(t)|_{L^2_{\omega}}^2 +C|f(t)|_{L^1}|f(t)|_{L^2_{\omega}} |\nabla_{\bx}f(t)|_{L^2_{\omega}}\\
    &+C|\nabla \bu(t)|_{L^{\infty}} |\nabla_{\bx}f(t)|_{L^2_{\omega}} |\nabla_{\bv}f(t)|_{L^2_{\omega}} +C|\bu(t)|_{L^{\infty}} |\nabla_{\bx}f(t)|_{L^2_{\omega}}^2.
  \end{aligned}
\end{equation}
Applying $\nabla_{\bv}$ to $\eqref{eq-kine-cs}_1$, we obtain
\begin{equation}\label{eq-fistv-dif}
  \begin{gathered}
  (\nabla_{\bv}f)_t +\bv \cdot \nabla_{\bx}\nabla_{\bv}f+\nabla_{\bv}\cdot \Big(L[f]\otimes \nabla_{\bv}f+(\bu-\bv)\otimes \nabla_{\bv}f \Big)\\
     =-\nabla_{\bv}L[f]\cdot \nabla_{\bv}f +\nabla f - \nabla_{\bx}f.
  \end{gathered}
\end{equation}
Multiplying \eqref{eq-fistv-dif} by $2\omega \nabla_{\bv}f$, we arrive at
\begin{equation}\label{eq-fistvwt-dif}
  \begin{aligned}
    &(|\nabla_{\bv}f|^2\omega)_t+\bv \cdot \nabla_{\bx}(|\nabla_{\bv}f|^2\omega)+\nabla_{\bv}\cdot \Big(L[f]|\nabla_{\bv}f|^2\omega +(\bu-\bv)|\nabla_{\bv}f|^2\omega \Big)\\
    =&-2\omega \nabla_{\bx}f \cdot \nabla_{\bv}f +\bv \cdot \nabla_{\bx}\omega |\nabla_{\bv}f|^2-2\omega \nabla_{\bv}f \cdot \nabla_{\bv}L[f]\cdot \nabla_{\bv}f +L[f] \cdot \nabla_{\bv}\omega |\nabla_{\bv}f|^2\\
    &-|\nabla_{\bv}f|^2 \omega  \nabla_{\bv} \cdot L[f] +(\bu-\bv) \cdot \nabla_{\bv}\omega |\nabla_{\bv}f|^2 +5|\nabla_{\bv}f|^2\omega.
  \end{aligned}
\end{equation}
Integrating \eqref{eq-fistvwt-dif} over $\bbr^3 \times \bbr^3$ results in
\begin{equation}\label{eq-fistvwt-gron}
  \begin{aligned}
    \frac{d}{dt}|\nabla_{\bv}f(t)|_{L^2_{\omega}}^2 \le & C|\nabla_{\bx}f(t)|_{L^2_{\omega}} |\nabla_{\bv}f(t)|_{L^2_{\omega}} +C|\nabla_{\bv}f(t)|_{L^2_{\omega}}^2 \\
    &+C|f(t)(1+\bv^2)^{\frac12}|_{L^1}|\nabla_{\bv}f(t)|_{L^2_{\omega}}^2 +C|\bu(t)|_{L^{\infty}} |\nabla_{\bv}f(t)|_{L^2_{\omega}}^2.
  \end{aligned}
\end{equation}
Adding \eqref{eq-zerowt-gron}, \eqref{eq-fistxwt-gron} and \eqref{eq-fistvwt-gron} together, we infer by \eqref{eq-kin-conser-mass} that
\begin{equation}\label{eq-sobowt-gron}
  \begin{aligned}
    \frac{d}{dt}|f(t)|_{H^1_{\omega}} \le & C\left(1+ C|f(t)(1+\bv^2)^{\frac12}|_{L^1} +R(t)|f(t)|_{L^1} +|\bu(t)|_{L^{\infty}} +|\nabla \bu(t)|_{L^{\infty}}\right) |f(t)|_{H^1_{\omega}}\\
    \le & C\Big(1 +R(t) +|\bu(t)|_{L^{\infty}} +|\nabla \bu(t)|_{L^{\infty}}\Big) |f(t)|_{H^1_{\omega}}
  \end{aligned}
\end{equation}
Solving the above Gronwall inequality gives
\begin{equation}\label{eq-sobowt-norm}
  |f(t)|_{H_{\omega}^1}\le |f_0|_{H_{\omega}^1}\exp \left( C\int_0^t(1+R(\tau)+|\bu(\tau)|_{L^{\infty}}+|\nabla\bu(\tau)|_{L^{\infty}})d \tau \right), \quad 0\le t \le T.
\end{equation}
(3)~ Define
\[
 \overline{f}:=f-\tilde{f}, \quad \overline{\bu}:=\bu-\tilde{\bu}, \quad \overline{f}_0:=f_0-\tilde{f}_0.
\]
It follows from the equation $\eqref{eq-kine-cs}_1$ that
\begin{equation} \label{eq-kine-dfr-cs}
 \overline{f}_t + \bv \cdot \nabla_{\bx} \overline{f}+ \nabla_{\bv} \cdot \Big(L[f]\overline{f}+(\bu-\bv)\overline{f}\Big) =-\nabla \cdot (L[\overline{f}]\tilde{f}) -\nabla \cdot (\overline{\bu}\tilde{f}).
\end{equation}
Multiplying \eqref{eq-kine-dfr-cs} by $2 \overline{f} \omega$, we deduce that
\begin{equation} \label{eq-dfrwt}
 \begin{aligned}
   &(\overline{f}^2 \omega)_t + \bv \cdot \nabla_{\bx} (\overline{f}^2 \omega)+ \nabla_{\bv} \cdot \Big(L[f]\overline{f}^2 \omega +(\bu-\bv)\overline{f}^2 \omega \Big)\\
   =&\bv \cdot \nabla_{\bx}\omega \overline{f}^2-\overline{f}^2 \omega \nabla_{\bv} \cdot L[f] +\overline{f}^2 L[f] \cdot \nabla_{\bv}\omega +3\overline{f}^2 \omega\\
   &+(\bu-\bv) \cdot \nabla_{\bv} \omega \overline{f}^2-2 \omega \overline{f}\Big[\nabla_{\bv} \cdot L[\overline{f}]\tilde{f} +L[\overline{f}] \cdot \nabla_{\bv}\tilde{f} \Big] -2 \omega \overline{f} \overline{\bu} \cdot \nabla_{\bv} \tilde{f}.
 \end{aligned}
\end{equation}
Integrating \eqref{eq-dfrwt} over $\bbr^3 \times \bbr^3$ gives
\begin{equation} \label{eq-dfrwt-gron}
 \begin{aligned}
   \frac{d}{dt}|\overline{f}(t)|_{L^2_{\omega}}^2
   \le & C |\overline{f}(t)|_{L^2_{\omega}}^2 +C |f(t)(1+\bv^2)^{\frac12}|_{L^1}|\overline{f}(t)|_{L^2_{\omega}}^2\\ &+C|\bu(t)|_{L^{\infty}}|\overline{f}(t)|_{L^2_{\omega}}^2
      +C|\tilde{f}(t)|_{L^2_{\omega}} |\overline{f}(t)|_{L^2_{\omega}}^2\\
    &+CR(t)|\nabla_{\bv} \tilde{f}(t)|_{L^2_{\omega}} |\overline{f}(t)|_{L^2_{\omega}}^2 +C|\overline{\bu}(t)|_{L^{\infty}}|\nabla_{\bv} \tilde{f}(t)|_{L^2_{\omega}} |\overline{f}(t)|_{L^2_{\omega}}.
 \end{aligned}
\end{equation}
By the Sobolev inequality \eqref{eq-sobol-infty}, solving the above Gronwall inequality gives rise to
\begin{equation} \label{eq-dfrwt-esti}
 \begin{aligned}
  \sup_{0\le t \le T}|f(t)-\tilde{f}(t)|_{L^2_{\omega}}\le & \left(|f_0-\tilde{f}_0|_{L^2_{\omega}}+\sup_{0\le t \le T}|\bu (t)-\tilde{\bu}(t)|_{D^1 \cap D^2}\int_0^T C|\nabla_{\bv}\tilde{f}(t)|_{L^2_{\omega}}dt\right)\\
    & \times \exp \left( \int_0^T C(1+|\bu(t)|_{D^1 \cap D^2}+R(t)|\tilde{f}(t)|_{H_{\omega}^1})dt \right),
  \end{aligned}
\end{equation}
where we have used the inequality
\[
 |f(t)(1+\bv^2)^{\frac12}|_{L^1} \le C|f(t)|_{L^2_{\omega}},
\]
similarly as in \eqref{eq-ini-f-lonenm}. This completes the proof.
\end{proof}
\vskip 0.3cm
\noindent \textit{Proof of Proposition \ref{prop-kine-cs-wp}}. We first mollify $f_0(\bx, \bv)$ and $\bu(t, \bx)$ by covolution, i.e.,
\[
 f_0^{\e}(\bx, \bv)=f_0*j_{\e}(\bx, \bv) \quad \text{and} \quad \bu^{\e}(t, \bx)=\bu * j_{\e}(t, \bx),
\]
where $j_{\e}$ is the standard mollifier. Using the contraction principle, one can prove
\begin{equation} \label{eq-kine-cs-appro}
     \begin{dcases}
         f_t^{\e} + \bv \cdot \nabla_{\bx} f^{\e}+ \nabla_{\bv} \cdot (L[f^{\e}]f^{\e}+(\bu^{\e}-\bv)f^{\e})=0,\\
         f^{\e}|_{t=0}=f_0^{\e}(\bx,\bv),
     \end{dcases}
\end{equation}
admits a unique local smooth solution by standard procedure. Combining with the a priori estimates Lemma \ref{lm-kine-cs-apriori} (1)-(2), one can extend the local smooth solution to be in the whole interval $[0,T]$. Continue to apply Lemma \ref{lm-kine-cs-apriori} (1)-(2) to $f^{\e_i}$. It follows from Lemma \ref{lm-kine-cs-apriori} (3) that
\[
 \sup_{0\le t \le T}|f^{\e_i}(t)-f^{\e_j}(t)|_{L_{\omega}^2} \le \exp(C(T))\left(|f_0^{\e_i}-f_0^{\e_j}|_{L_{\omega}^2} +C(T)\sup_{0\le t \le T}|\bu^{\e_i}(t)-\bu^{\e_j}(t)|_{D^1 \cap D^2} \right),
\]
where
\begin{multline}
  C(T)\le C \int_0^T (1+|\bu(t)|_{D^1 \cap D^2})dt \\+CT\Big(1+R(T)\Big)|f_0|_{H_{\omega}^1}\exp \left( C\int_0^T(1+R(t)+|\bu(t)|_{L^{\infty}}+|\nabla\bu(\tau)|_{L^{\infty}})d t \right).
\end{multline}
Thus, there exists $f(t, \bx, \bv) \in C([0,T];L_{\omega}^2(\bbr^3 \times \bbr^3))$ such that
\begin{equation}\label{eq-fcon-wtl}
 f^{\e_i}(t, \bx, \bv) \to f(t, \bx, \bv) \quad \text{in $C([0,T];L_{\omega}^2(\bbr^3 \times \bbr^3))$, as $\e_i \to 0$}.
\end{equation}
It is easy to see that Proposition \ref{prop-kine-cs-wp} (1) holds, and that the non-negative $f$ satisfies
\[
  f_t + \bv \cdot \nabla_{\bx} f+ \nabla_{\bv} \cdot (L[f]f+(\bu-\bv)f)=0 \quad \text{in $\mathcal{D}'((0,T)\times \bbr^3 \times \bbr^3)$}.
\]
Next we prove $f(t,\bx, \bv) \in C([0,T];H^1_{\omega}(\bbr^3 \times \bbr^3))$. Using Lemma \ref{lm-kine-cs-apriori} (2) for $f^{\e_i}(t, \bx, \bv)$, one infers by \eqref{eq-fcon-wtl} that there exists a subsequence, still denoted by $\{f^{\e_i}(t, \bx, \bv)\}$, such that
\begin{equation}\label{eq-fwekcon-wtsobo}
 \omega^{\frac12}\nabla_{\bx}f^{\e_i} \rightharpoonup \omega^{\frac12}\nabla_{\bx}f \quad \text{weakly-$\star$ in $L^{\infty}(0,T;L^2)$, as $\e_i \to 0$}.
\end{equation}
It is easy to show that
\begin{equation}\label{eq-wtfxone-distr}
  \begin{aligned}
    &(\omega^{\frac12}\nabla_{\bx}f)_t +\bv \cdot \nabla_{\bx}(\omega^{\frac12}\nabla_{\bx}f) +\nabla_{\bv} \cdot \Big(L[f]\otimes \omega^{\frac12}\nabla_{\bx}f +(\bu-\bv)\otimes \omega^{\frac12}\nabla_{\bx}f \Big)\\
    =&\bv \cdot \nabla_{\bx}\omega^{\frac12} \nabla_{\bx}f -\nabla_{\bx}L[f] \cdot \nabla_{\bv}f \omega^{\frac12} +L[f]\cdot \nabla_{\bv}\omega^{\frac12}\nabla_{\bx}f -\omega^{\frac12}f \nabla_{\bx} \nabla_{\bv} \cdot L[f]\\
    &-\omega^{\frac12}\nabla \bu \cdot \nabla_{\bv}f +(\bu-\bv)\cdot \nabla_{\bv}\omega^{\frac12}\nabla_{\bx}f \quad \text{in $\mathcal{D}'((0,T)\times \bbr^3 \times \bbr^3)$}.
  \end{aligned}
\end{equation}
From \eqref{eq-wtfxone-distr}, we infer that
\[
  (\omega^{\frac12}\nabla_{\bx}f)_t \in L^{\infty}(0,T;H^{-1}).
\]
This together with the fact that $\omega^{\frac12}\nabla_{\bx}f \in L^{\infty}(0,T;L^2)$ due to \eqref{eq-fwekcon-wtsobo}, implies
\begin{equation}\label{eq-wtfxone-wekcont}
  \omega^{\frac12}\nabla_{\bx}f \in C([0,T];L^2-W),
\end{equation}
which means that $\omega^{\frac12}\nabla_{\bx}f$ is continuous in $[0,T]$ with respect to the weak topology in $L^2(\bbr^3 \times \bbr^3)$.

Take $j_{\e}(\bx-\cdot, \bv-\cdot)$ as the test function and denote $f*j_{\e}$ by $\langle f \rangle_{\e}$. It follows from \eqref{eq-wtfxone-distr} that

\begin{equation} \label{eq-wtfxone-tes}
 \begin{aligned}
  &(\langle \omega^{\frac12}\nabla_{\bx} f\rangle_{\e})_t +\bv \cdot \nabla_{\bx} \langle\omega^{\frac12}\nabla_{\bx} f\rangle_{\e}+ \nabla_{\bv} \cdot \Big(L[f]\otimes \langle\omega^{\frac12}\nabla_{\bx} f\rangle_{\e} +(\bu-\bv)\otimes \langle\omega^{\frac12}\nabla_{\bx} f\rangle_{\e}\Big)\\
  =&\langle\bv\cdot\nabla_{\bx}\omega^{\frac12}\nabla_{\bx}f\rangle_{\e} -\langle\nabla_{\bx}L[f]\cdot\nabla_{\bv}f\omega^{\frac12}\rangle_{\e}\\
  &+\langle L[f]\cdot\nabla_{\bv}\omega^{\frac12}\nabla_{\bx}f\rangle_{\e}
   -\langle f\omega^{\frac12}\nabla_{\bx}\nabla_{\bv}\cdot L[f]\rangle_{\e}\\
  &-\langle \omega^{\frac12}\nabla \bu \cdot \nabla_{\bv}f\rangle_{\e} +\langle (\bu-\bv)\cdot\nabla_{\bv}\omega^{\frac12}\nabla_{\bx}f \rangle_{\e}\\
  &+\bv \cdot \nabla_{\bx}\langle \omega^{\frac12}\nabla_{\bx} f\rangle_{\e} -\nabla_{\bx}\cdot \langle \bv \otimes \omega^{\frac12}\nabla_{\bx} f\rangle_{\e}\\
  &+\nabla_{\bv} \cdot (L[f]\otimes \langle\omega^{\frac12}\nabla_{\bx} f\rangle_{\e}) -\nabla_{\bv}\cdot \langle L[f]\otimes \omega^{\frac12}\nabla_{\bx} f\rangle_{\e}\\
  &+\nabla_{\bv} \cdot ((\bu-\bv)\otimes \langle\omega^{\frac12}\nabla_{\bx} f\rangle_{\e}) -\nabla_{\bv}\cdot \langle (\bu-\bv)\otimes \omega^{\frac12}\nabla_{\bx} f\rangle_{\e}.
 \end{aligned}
\end{equation}
Multiplying \eqref{eq-wtfxone-tes} by $2\langle \omega^{\frac12}\nabla_{\bx} f\rangle_{\e}$, we obtain
\begin{equation} \label{eq-wtfxone-tes-sq}
 \begin{aligned}
  &(\langle \omega^{\frac12}\nabla_{\bx} f\rangle_{\e}^2)_t +\bv \cdot \nabla_{\bx} \langle\omega^{\frac12}\nabla_{\bx} f\rangle_{\e}^2+ \nabla_{\bv} \cdot \Big(L[f] \langle\omega^{\frac12}\nabla_{\bx} f\rangle_{\e}^2 +(\bu-\bv) \langle\omega^{\frac12}\nabla_{\bx} f\rangle_{\e}^2\Big)\\
  =&-\nabla_{\bv} \cdot L[f]\langle \omega^{\frac12}\nabla_{\bx} f\rangle_{\e}^2 +3\langle \omega^{\frac12}\nabla_{\bx} f\rangle_{\e}^2\\
  &+2\langle \omega^{\frac12}\nabla_{\bx} f\rangle_{\e} \cdot\langle\bv\cdot\nabla_{\bx}\omega^{\frac12}\nabla_{\bx}f -\langle\nabla_{\bx}L[f]\cdot\nabla_{\bv}f\omega^{\frac12}\rangle_{\e}\\
  &+2\langle \omega^{\frac12}\nabla_{\bx} f\rangle_{\e}
  \cdot \langle L[f]\cdot\nabla_{\bv}\omega^{\frac12}\nabla_{\bx}f
   - f\omega^{\frac12}\nabla_{\bx}\nabla_{\bv}\cdot L[f]\\
   &\qquad \qquad \qquad \quad-\omega^{\frac12}\nabla \bu \cdot \nabla_{\bv}f +(\bu-\bv)\cdot\nabla_{\bv}\omega^{\frac12}\nabla_{\bx}f \rangle_{\e}\\
  &+2\langle \omega^{\frac12}\nabla_{\bx} f\rangle_{\e}
  \cdot\bigg[\bv \cdot \nabla_{\bx}\langle \omega^{\frac12}\nabla_{\bx} f\rangle_{\e} -\nabla_{\bx}\cdot \langle \bv \otimes \omega^{\frac12}\nabla_{\bx} f\rangle_{\e}\bigg]\\
  &+2\langle \omega^{\frac12}\nabla_{\bx} f\rangle_{\e}
  \cdot\bigg[\nabla_{\bv} \cdot (L[f]\otimes \langle\omega^{\frac12}\nabla_{\bx} f\rangle_{\e}) -\nabla_{\bv}\cdot \langle L[f]\otimes \omega^{\frac12}\nabla_{\bx} f\rangle_{\e}\bigg]\\
  &+2\langle \omega^{\frac12}\nabla_{\bx} f\rangle_{\e}
  \cdot\bigg[\nabla_{\bv} \cdot ((\bu-\bv)\otimes \langle\omega^{\frac12}\nabla_{\bx} f\rangle_{\e}) -\nabla_{\bv}\cdot \langle (\bu-\bv)\otimes \omega^{\frac12}\nabla_{\bx} f\rangle_{\e}\bigg].
 \end{aligned}
\end{equation}
Integrating \eqref{eq-wtfxone-tes-sq} over $\bbr^3\times \bbr^3$ leads to
\begin{equation} \label{eq-wtfxone-tes-sqint}
 \begin{aligned}
   &\frac{d}{dt}|\langle \omega^{\frac12}\nabla_{\bx} f\rangle_{\e}|_{L^2}^{2}\\
   =&\int_{\bbr^6}\bigg(-\nabla_{\bv} \cdot L[f]\langle \omega^{\frac12}\nabla_{\bx} f\rangle_{\e}^2 +3\langle \omega^{\frac12}\nabla_{\bx} f\rangle_{\e}^2\bigg)d\bx d\bv \\
  &+\int_{\bbr^6}2\langle \omega^{\frac12}\nabla_{\bx} f\rangle_{\e} \cdot\langle\bv\cdot\nabla_{\bx}\omega^{\frac12}\nabla_{\bx}f -\langle\nabla_{\bx}L[f]\cdot\nabla_{\bv}f\omega^{\frac12}\rangle_{\e}d\bx d\bv\\
  &+\int_{\bbr^6}2\langle \omega^{\frac12}\nabla_{\bx} f\rangle_{\e}
  \cdot \langle L[f]\cdot\nabla_{\bv}\omega^{\frac12}\nabla_{\bx}f
   - f\omega^{\frac12}\nabla_{\bx}\nabla_{\bv}\cdot L[f]\\
   &\qquad \qquad \qquad \qquad-\omega^{\frac12}\nabla \bu \cdot \nabla_{\bv}f +(\bu-\bv)\cdot\nabla_{\bv}\omega^{\frac12}\nabla_{\bx}f \rangle_{\e}d\bx d\bv\\
  &+\int_{\bbr^6}2\langle \omega^{\frac12}\nabla_{\bx} f\rangle_{\e}
  \cdot\bigg[\bv \cdot \nabla_{\bx}\langle \omega^{\frac12}\nabla_{\bx} f\rangle_{\e} -\nabla_{\bx}\cdot \langle \bv \otimes \omega^{\frac12}\nabla_{\bx} f\rangle_{\e}\bigg]d\bx d\bv\\
  &+\int_{\bbr^6}2\langle \omega^{\frac12}\nabla_{\bx} f\rangle_{\e}
  \cdot\bigg[\nabla_{\bv} \cdot (L[f]\otimes \langle\omega^{\frac12}\nabla_{\bx} f\rangle_{\e}) -\nabla_{\bv}\cdot \langle L[f]\otimes \omega^{\frac12}\nabla_{\bx} f\rangle_{\e}\bigg]d\bx d\bv\\
  &+\int_{\bbr^6}2\langle \omega^{\frac12}\nabla_{\bx} f\rangle_{\e}
  \cdot\bigg[\nabla_{\bv} \cdot ((\bu-\bv)\otimes \langle\omega^{\frac12}\nabla_{\bx} f\rangle_{\e}) -\nabla_{\bv}\cdot \langle (\bu-\bv)\otimes \omega^{\frac12}\nabla_{\bx} f\rangle_{\e}\bigg]d\bx d\bv\\
   =:&\sum_{i=1}^{6}H_i.
 \end{aligned}
\end{equation}
We estimate each $H_i$ $(1\le i \le 6)$ as follows.
\begin{equation*}
 \begin{aligned}
  |H_1|=& \left|\int_{\bbr^6}\bigg(-\nabla_{\bv} \cdot L[f]\langle \omega^{\frac12}\nabla_{\bx} f\rangle_{\e}^2 +3\langle \omega^{\frac12}\nabla_{\bx} f\rangle_{\e}^2\bigg)d\bx d\bv \right|\\
    \le& C|f(t)|_{L^1}|\nabla_{\bx} f(t)|_{L^2_{\omega}}^2 +C|\nabla_{\bx} f(t)|_{L^2_{\omega}}^2;
\end{aligned}
\end{equation*}
\begin{equation*}
 \begin{aligned}
 |H_2|=& \left|\int_{\bbr^6}2\langle \omega^{\frac12}\nabla_{\bx} f\rangle_{\e} \cdot\langle\bv\cdot\nabla_{\bx}\omega^{\frac12}\nabla_{\bx}f -\langle\nabla_{\bx}L[f]\cdot\nabla_{\bv}f\omega^{\frac12}\rangle_{\e}d\bx d\bv \right|\\
 \le& C|\nabla_{\bx} f(t)|_{L^2_{\omega}}^2 +CR(t)|f(t)|_{L^1}|\nabla_{\bx} f(t)|_{L^2_{\omega}}|\nabla_{\bv} f(t)|_{L^2_{\omega}};
 \end{aligned}
\end{equation*}
\begin{equation*}
 \begin{aligned}
 |H_3|=&\Bigg|\int_{\bbr^6}2\langle \omega^{\frac12}\nabla_{\bx} f\rangle_{\e}
  \cdot \langle L[f]\cdot\nabla_{\bv}\omega^{\frac12}\nabla_{\bx}f
   - f\omega^{\frac12}\nabla_{\bx}\nabla_{\bv}\cdot L[f]\\
   &\qquad \qquad \qquad \qquad-\omega^{\frac12}\nabla \bu \cdot \nabla_{\bv}f +(\bu-\bv)\cdot\nabla_{\bv}\omega^{\frac12}\nabla_{\bx}f \rangle_{\e}d\bx d\bv \Bigg|\\
 \le& CR(t)|f(t)|_{L^1}|\nabla_{\bx} f(t)|_{L^2_{\omega}}^2 +C|f(t)|_{L^1}|\nabla_{\bx} f(t)|_{L^2_{\omega}}|f(t)|_{L^2_{\omega}}\\
 &+C|\nabla \bu(t)|_{L^{\infty}}|\nabla_{\bx} f(t)|_{L^2_{\omega}}|\nabla_{\bv} f(t)|_{L^2_{\omega}}
 +C|\bu(t)|_{L^{\infty}}|\nabla_{\bx} f(t)|_{L^2_{\omega}}^2 +C|\nabla_{\bx} f(t)|_{L^2_{\omega}}^2;
 \end{aligned}
\end{equation*}
\begin{equation*}
 \begin{aligned}
 |H_4|=&\left|\int_{\bbr^6}2\langle \omega^{\frac12}\nabla_{\bx} f\rangle_{\e}
  \cdot\bigg[\bv \cdot \nabla_{\bx}\langle \omega^{\frac12}\nabla_{\bx} f\rangle_{\e} -\nabla_{\bx}\cdot \langle \bv \otimes \omega^{\frac12}\nabla_{\bx} f\rangle_{\e}\bigg]d\bx d\bv \right|\\
  =&\Bigg|\int_{\bbr^6} \int_{\bbr^6} 2(\bv-\bw)\cdot \nabla_{\bx}j_{\e}(\bx-\bz,\bv-\bw)\omega^{\frac12}(\bz, \bw)\nabla_{\bx} f(t,\bz, \bw)d\bz d\bw\cdot \langle \omega^{\frac12}\nabla_{\bx} f\rangle_{\e}d\bx d\bv  \Bigg|\\
 \le&C|\nabla_{\bx} f(t)|_{L^2_{\omega}}^2,
 \end{aligned}
\end{equation*}
where we have used the facts that
\[
 |\bw-\bv|\le \e \quad \text{and}\quad |\nabla_{\bx}j_{\e}|_{L^1}\le \frac{C}{\e};
\]
\begin{equation*}
 \begin{aligned}
 |H_5|=&\left|\int_{\bbr^6} 2\langle \omega^{\frac12}\nabla_{\bx} f\rangle_{\e}
  \cdot\bigg[\nabla_{\bv} \cdot (L[f]\otimes \langle\omega^{\frac12}\nabla_{\bx} f\rangle_{\e}) -\nabla_{\bv}\cdot \langle L[f]\otimes \omega^{\frac12}\nabla_{\bx} f\rangle_{\e}\bigg] d\bx d\bv \right|\\
 \le&2\left|\int_{\bbr^6} \nabla_{\bv} \cdot L[f] \langle\omega^{\frac12}\nabla_{\bx} f\rangle_{\e}^2 d\bx d\bv\right|\\
 &+2\Bigg|\int_{\bbr^6} \int_{\bbr^6} \Big (L[f](t,\bx,\bv)-L[f](t,\bz,\bw)\Big)\cdot\nabla_{\bv}j_{\e}(\bx-\bz, \bv-\bw)\\
 &\qquad \qquad \qquad\Big(\omega^{\frac12}\nabla_{\bx}f\Big)(t, \bz, \bw)d\bz d\bw \cdot \langle \omega^{\frac12}\nabla_{\bx}f \rangle_{\e}d\bx d\bv \Bigg|\\
 \le& C|f(t)|_{L^1}|\nabla_{\bx} f(t)|_{L^2_{\omega}}^2 +C(1+R(t))|f(t)|_{L^1}|\nabla_{\bx} f(t)|_{L^2_{\omega}}^2,
 \end{aligned}
\end{equation*}
where we have used the facts that
\begin{equation*}
 \begin{aligned}
   &|L[f](t,\bx,\bv)-L[f](t,\bz,\bw)|\\
   =&\left|\int_{\bbr^6}\varphi (|\bx-\by|)f(t,\by,\bv^*)(\bv^*-\bv)d\by d\bv^* -\int_{\bbr^6}\varphi (|\bz-\by|)f(t,\by,\bv^*)(\bv^*-\bw)d\by d\bv^*\right|\\
   \le& CR(t)|f(t)|_{L^1}|\bx-\bz| +C|f(t)|_{L^1}|\bv-\bw|\\
   \le& C|f(t)|_{L^1}(1+R(t))\e
 \end{aligned}
\end{equation*}
and
\[
 |\nabla_{\bv}j_{\e}|_{L^1}\le \frac{C}{\e};
\]
\begin{equation*}
 \begin{aligned}
 |H_6|=&\left|\int_{\bbr^6} 2\langle \omega^{\frac12}\nabla_{\bx} f\rangle_{\e}
  \cdot\bigg[\nabla_{\bv} \cdot ((\bu-\bv)\otimes \langle\omega^{\frac12}\nabla_{\bx} f\rangle_{\e}) -\nabla_{\bv}\cdot \langle (\bu-\bv)\otimes \omega^{\frac12}\nabla_{\bx} f\rangle_{\e}\bigg] d\bx d\bv \right|\\
 \le& 2\Bigg|\int_{\bbr^6} \int_{\bbr^6} \Big [\bu(t,\bx)-\bv-(\bu(t,\bz)-\bw)\Big]\cdot\nabla_{\bv}j_{\e}(\bx-\bz, \bv-\bw)\\
 &\qquad \qquad \Big(\omega^{\frac12}\nabla_{\bx}f\Big)(t, \bz, \bw)d\bz d\bw \cdot \langle \omega^{\frac12}\nabla_{\bx}f \rangle_{\e}d\bx d\bv \Bigg|+ 6|\nabla_{\bx} f(t)|_{L^2_{\omega}}^2\\
 \le& C(1+|\nabla \bu(t)|_{L^{\infty}})|\nabla_{\bx} f(t)|_{L^2_{\omega}}^2.
 \end{aligned}
\end{equation*}
Using Lemma \ref{lm-kine-cs-apriori} (2) for $f^{\e_i}(t, \bx, \bv)$, we have
\[
 \sup_{0\le t \le T}|f^{\e_i}(t)|_{H^1_{\omega}} \le |f_0|_{H_{\omega}^1}\exp \left( C\int_0^T(1+R(t)+|\bu(t)|_{L^{\infty}}+|\nabla\bu(t)|_{L^{\infty}})d t\right)\quad \text{for all $i \in \bbn $}.
\]
Combining with \eqref{eq-fcon-wtl}, we infer that there exists a subsequence, still denoted by $\{f^{\e_i}(t, \bx, \bv)\}$, such that
\begin{equation}\label{eq-fcon-wk}
 f^{\e_i}(t, \bx, \bv) \rightharpoonup f(t, \bx, \bv) \quad \text{weakly-$\star$ in $L^{\infty}(0,T;H^1_{\omega}(\bbr^3 \times \bbr^3))$, as $\e_i \to 0$}.
\end{equation}
It follows from \eqref{eq-fcon-wk} that
\begin{equation}\label{eq-wtfsobo-nom}
 \begin{aligned}
  |f|_{L^{\infty}(0,T;H^1_{\omega})} \le& \liminf_{\e_i \to 0}|f^{\e_i}|_{L^{\infty}(0,T;H^1_{\omega})} \\ \le&|f_0|_{H_{\omega}^1}\exp \left( C\int_0^T(1+R(t)+|\bu(t)|_{L^{\infty}}+|\nabla\bu(t)|_{L^{\infty}})d t \right)\\
  \le& C(T).
 \end{aligned}
\end{equation}
From the assumption $\bu(t,\bx) \in C([0,T];D^1 \cap D^2)\cap L^2(0,T;D^{2,q})$, we know that there also exists $C(T)$ such that
\begin{equation}\label{eq-constu}
 \sup_{0\le t\le T}|\bu(t)|_{D^1\cap D^2} +\int_0^T|\bu(t)|_{D^{2,q}}^2 dt\le C(T).
\end{equation}
Substituting these estimates for $H_i$ $(1\le i \le 6)$ into \eqref{eq-wtfxone-tes-sqint} and integrating the resulting inequality over $[t_1, t_2]$, $\forall t_1, t_2 \in [0,T]$, we obtain by \eqref{eq-wtfsobo-nom} and \eqref{eq-constu} that
\[
\begin{aligned}
 \left| |\langle \omega^{\frac12}\nabla_{\bx} f(t_2) \rangle_{\e}|_{L^2}^2-|\langle^{\frac12}\nabla_{\bx} f(t_1) \rangle_{\e}|_{L^2}^2 \right|\le&  C(T)\int_{t_1}^{t_2}(1+|\nabla \bu(t)|_{L^{\infty}})dt\\
 \le&C(T)\Big(|t_2-t_1|+|t_2-t_1|^{\frac12}\Big),\quad \forall t_1, t_2 \in [0,T],
\end{aligned}
\]
Letting $\e \to 0$ yields
\begin{equation} \label{eq-fsoboconti}
  \left| | \nabla_{\bx}f(t_2)|_{L^2_{\omega}}^2-| \nabla_{\bx}f(t_1)|_{L^2_{\omega}}^2 \right|\le  C(T)\Big(|t_2-t_1|+|t_2-t_1|^{\frac12}\Big),\quad \forall t_1, t_2 \in [0,T].
\end{equation}
Combining \eqref{eq-wtfxone-wekcont} and \eqref{eq-fsoboconti}, we know
\[
 \nabla_{\bx}f \in C([0,T]; L^2_{\omega}).
\]
Similarly, we can prove $\nabla_{\bv}f \in C([0,T]; L^2_{\omega})$. Together with \eqref{eq-fcon-wtl}, it is shown that $f(t,\bx,\bv) \in C([0,T]; H^1_{\omega})$, and thus it is a strong solution to \eqref{eq-kine-cs}.

Assume $\tilde{f}$ is a strong solution to  \eqref{eq-kine-cs} with $\bu$ and $f_0$ replaced by $\tilde{\bu}$ and $\tilde{f}_0$, respectively. Similarly as the proof in Lemma \ref{lm-kine-cs-apriori} (3), one can demonstrate that
\begin{multline}\label{eq-kine-cs-stab}
 \sup_{0\le t \le T}|f(t)-\tilde{f}(t)|_{L_{\omega}^2} \le \left(|f_0-\tilde{f}_0|_{L^2_{\omega}}+\sup_{0\le t \le T}|\bu (t)-\tilde{\bu}(t)|_{D^1 \cap D^2}\int_0^T C|\nabla_{\bv}\tilde{f}(t)|_{L^2_{\omega}}dt\right)\\
     \times \exp \left( \int_0^T C(1+|\bu(t)|_{D^1 \cap D^2}+R(t)|\tilde{f}(t)|_{H_{\omega}^1})dt \right),
\end{multline}
which also implies uniqueness of the strong solution.  Similarly as \eqref{eq-wtfsobo-nom}, we infer by using the uniqueness and continuity of $f(t)$ in $H_{\omega}^1$ that
\[
|f(t)|_{H_{\omega}^1}\le |f_0|_{H_{\omega}^1}\exp \left( C\int_0^t(1+R(\tau)+|\bu(\tau)|_{L^{\infty}}+|\nabla\bu(\tau)|_{L^{\infty}})d \tau \right), \quad 0\le t \le T.
\]
This completes the proof. $\hfill \square$
%%%%%%%%%%%%%%%%%%%%%%%%%%%%%%%%%%%%%%%%%%%%%%%%%%%%%%%%%%%%%%%%%%%%%%%%%%%%%%%%%%%%%%%%%%%%%%%%%%%%%%%%%%%%%%%%
%
%                   Sect.3 Local Existence of Strong Solutions to the Coupled System
%
%%%%%%%%%%%%%%%%%%%%%%%%%%%%%%%%%%%%%%%%%%%%%%%%%%%%%%%%%%%%%%%%%%%%%%%%%%%%%%%%%%%%%%%%%%%%%%%%%%%%%%%%%%%%%%%%
\section{Local Existence of Strong Solutions to the Coupled System}\label{set-loc-exist}
\setcounter{equation}{0}
In this section, we prove the local existence of strong solutions to the coupled system \eqref{eq-cs-ns}. Our strategy is as follows. We first linearize the system and construct the approximate solutions by iteration; then we derive the uniform bound on the approximate solutions in a higher order norm for the short time; last we prove that the approximate solution sequence is the Cauchy sequence in a lower order norm, and further show that the limit is the desired local strong solution. Based on our analysis in Section \ref{sec-kine-cs}, we present the existence result for the linearized system without proof. The reader can refer to \cite{cho2004unique}\cite{cho2006existence} for details related to the linearized compressible Navier--Stokes equation.
\begin{proposition}\label{prop-cs-ns-line}
Assume $\hat{\bu}(t,\bx) \in C([0,T];D^1\cap D^{2}(\bbr^3))\cap L^2(0,T;D^{2,q}(\bbr^3))$, $3<q\le 6$, and $\hat\bu_t(t,\bx)\in L^2(0,T;D^{1}(\bbr^3))$. Under the initial conditions in Theorem \ref{thm-loc-exist}, the following linearized system
\begin{equation} \label{eq-cs-ns-line}
     \begin{dcases}
         f_t + \bv \cdot \nabla_{\bx} f+ \nabla_{\bv} \cdot (L[f]f+(\hat{\bu}-\bv)f)=0,\\
         \rho_t+\nabla \cdot (\rho \hat{\bu})=0,\\
         (\rho \bu)_t+\nabla \cdot (\rho \hat{\bu} \otimes \bu)+\nabla P=\mu \Delta \bu +(\mu+\lambda)\nabla \nabla \cdot \bu+\int_{\bbr^3}f(\bv-\bu)d\bv,
     \end{dcases}
\end{equation}
admits a unique strong solution $(f(t,\bx,\bv), \rho(t,\bx), \bu(t,\bx))$, satisfying
\[
\begin{aligned}
  &f(t,\bx,\bv)\in C([0,T];H_{\omega}^1(\bbr^3 \times \bbr^3)),\\
  &\rho(t,\bx)\in C([0,T];H^1\cap W^{1,q}(\bbr^3)),\\
  &\bu(t,\bx)\in C([0,T];D^1\cap D^{2}(\bbr^3))\cap L^2(0,T;D^{2,q}(\bbr^3)),\\
  &\rho_t(t,\bx)\in C([0,T];L^2\cap L^{q}(\bbr^3)),\quad \bu_t(t,\bx)\in L^2(0,T;D^{1}(\bbr^3)),\\
  &\rho^{\frac12}\bu_t(t,\bx)\in L^{\infty}(0,T;L^2(\bbr^3)).
\end{aligned}
\]
\end{proposition}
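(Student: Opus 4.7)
The plan is to solve \eqref{eq-cs-ns-line} in a triangular fashion: since $\hat{\bu}$ is prescribed, the first two equations decouple from the third, so one first produces $f$, then $\rho$, and finally uses both as data in a linear parabolic system for $\bu$. This is what makes the linearized problem tractable by standard machinery, once the kinetic analysis of Section \ref{sec-kine-cs} is combined with the Cho--Choe--Kim framework for the compressible Navier--Stokes equation with possibly vanishing density.

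First, I would apply Proposition \ref{prop-kine-cs-wp} with driving velocity $\hat{\bu}$ to obtain the unique nonnegative $f\in C([0,T];H_{\omega}^1)$, together with the finite $\bv$-support bound $R(t)$ and the $H_{\omega}^1$ estimate (2) of that proposition. Because the weight $\omega$ decays like $(1+|\bv|^2)^{2+\beta}$, integration in $\bv$ produces uniform $L_t^{\infty}H_{\bx}^1$ control of the macroscopic quantities
\[
 \rho_f(t,\bx) := \int_{\bbr^3} f(t,\bx,\bv)\,d\bv, \qquad \bj(t,\bx) := \int_{\bbr^3} \bv\, f(t,\bx,\bv)\,d\bv,
\]
which are exactly what the momentum equation requires. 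Second, I would solve the continuity equation $\rho_t+\nabla\cdot(\rho\hat{\bu})=0$ by the method of characteristics; since $\hat{\bu}\in L^2(0,T;D^{2,q})$ with $q>3$ embeds into $L^2(0,T;\text{Lip})$ and $\hat{\bu}_t\in L^2(0,T;D^1)$, direct energy computations along the flow yield $\rho\in C([0,T];H^1\cap W^{1,q})$, while $\rho_t=-\nabla\cdot(\rho\hat{\bu})\in C([0,T];L^2\cap L^q)$ follows from the equation itself.

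Third, substituting $f$ and $\rho$ and using the continuity equation to rewrite $\eqref{eq-cs-ns-line}_3$ in non-conservative form, the momentum equation becomes the variable-coefficient Lam\'e problem
\[
 \rho\bu_t + \rho\hat{\bu}\cdot\nabla\bu + \rho_f\,\bu - \mu\Delta\bu - (\mu+\lambda)\nabla\nabla\cdot\bu = -\nabla P(\rho) + \bj, \qquad \bu|_{t=0}=\bu_0.
\]
The density may vanish, so this system is degenerate parabolic in general. Following the Cho--Choe--Kim strategy \cite{cho2004unique,cho2006existence}, I would regularize by replacing $\rho$ with $\rho+\delta$ for small $\delta>0$; on the non-degenerate level the linear system is solvable by a standard semigroup / Galerkin construction in $\bu\in L^{\infty}(0,T;D^1\cap D^2)\cap L^2(0,T;D^{2,q})$. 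The heart of the argument is to derive $\delta$-independent estimates in those spaces, together with the weighted bounds $\sqrt{\rho+\delta}\,\bu_t\in L^{\infty}(0,T;L^2)$ and $\bu_t\in L^2(0,T;D^1)$. These are obtained by testing the equation against $\bu_t$, using the elliptic $D^{2,q}$-estimate for the Lam\'e operator on $\bbr^3$ to close a Gronwall loop, and, crucially, using the compatibility condition in Theorem \ref{thm-loc-exist} to bound $\sqrt{\rho_0+\delta}\,\bu_t(0)$ in $L^2$ uniformly in $\delta$. Passing $\delta\to 0$ via weak compactness produces the desired strong solution; uniqueness follows from a standard energy estimate on the difference of two solutions.

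I expect the main obstacle to lie in closing the $\bu_t$ estimates in the presence of the kinetic coupling. Differentiating the equation in $t$ produces a forcing of the schematic form $\partial_t\bj - (\partial_t\rho_f)\bu$, which has to be controlled by quantities already under command: from $\eqref{eq-cs-ns-line}_1$ one reads $\partial_t\rho_f=-\nabla\cdot\bj$, while the $L_t^2$-bound on $\partial_t\bj$ comes from the moments of $L[f]f+(\hat{\bu}-\bv)f$, all finite thanks to the uniform $H_{\omega}^1$-bound on $f$ and its compact $\bv$-support. Aside from this kinetic bookkeeping, which mirrors the weighted-space computations already performed in Section \ref{sec-kine-cs}, the argument is a direct transcription of \cite{cho2004unique,cho2006existence}, which is the reason the author states the proposition without proof.
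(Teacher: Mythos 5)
Your plan is correct and coincides with what the paper intends: the author states Proposition \ref{prop-cs-ns-line} without proof, pointing to Proposition \ref{prop-kine-cs-wp} for the kinetic part and to \cite{cho2004unique}\cite{cho2006existence} for the linearized compressible Navier--Stokes equation, which is exactly the triangular decomposition and Cho--Choe--Kim regularization you describe. Your additional remarks on controlling the moments $\rho_f$ and $\bj$ via the weight $\omega$ and the compact $\bv$-support are the right bookkeeping and consistent with the estimates the paper carries out later in Section \ref{set-loc-exist}.
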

Next we use Proposition \ref{prop-cs-ns-line} to finish the proof of Theorem \ref{thm-loc-exist}.
\vskip 0.3cm
\noindent \textit{Proof of Theorem \ref{thm-loc-exist}}. We first construct the approximate solutions by iteration. Given $\bu^n(t, \bx) \in C([0,T];D^1\cap D^{2}(\bbr^3))\cap L^2(0,T;D^{2,q}(\bbr^3))$, $3<q\le 6$, and $\bu^n_t(t,\bx)\in L^2(0,T;D^{1}(\bbr^3))$, with $\bu^n|_{t=0}=\bu_0$ in $D^1\cap D^{2}(\bbr^3)$, $(f^{n+1},\rho^{n+1}, \bu^{n+1})$ is determined by
\begin{equation} \label{eq-cs-ns-appro}
     \begin{dcases}
         f^{n+1}_t + \bv \cdot \nabla_{\bx} f^{n+1}+ \nabla_{\bv} \cdot (L[f^{n+1}]f^{n+1}+(\bu^{n}-\bv)f^{n+1})=0,\\
         \rho^{n+1}_t+\nabla \cdot (\rho^{n+1} \bu^{n})=0,\\
         (\rho^{n+1} \bu^{n+1})_t+\nabla \cdot (\rho^{n+1} \bu^{n} \otimes \bu^{n+1})+\nabla P(\rho^{n+1})=\mu \Delta \bu^{n+1}\\ \qquad  +(\mu+\lambda)\nabla \nabla \cdot \bu^{n+1}+\int_{\bbr^3}f^{n+1}(\bv-\bu^{n+1})d\bv,
     \end{dcases}
\end{equation}
subject to the initial data
\[
 f^{n+1}|_{t=0}=f_0, \quad \rho^{n+1}|_{t=0}=\rho_0, \quad \bu^{n+1}|_{t=0}=\bu_0.
\]
Using Proposition \ref{prop-cs-ns-line}, we know $(f^{n+1},\rho^{n+1}, \bu^{n+1})$ is well-defined. In the iteration procedure, $\bu^0$ is set by
\begin{equation} \label{eq-cs-ns-approini}
     \begin{dcases}
       \bu^0_t=\Delta\bu^0,\\
       \bu^0|_{t=0}=\bu_0 \in D^1\cap D^2.
     \end{dcases}
\end{equation}
It is easy to see
\[
 \bu^0 \in C([0,T];D^1\cap D^{2})\cap L^2(0,T;D^{2,q}) \quad\text{and} \quad \bu^0_t \in L^2(0,T;D^{1}).
\]
Moreover, it holds that
\[
 \sup_{0\le t \le T}|\bu^0(t)|_{D^1\cap D^2}^2+ \int_0^T \Big(|\bu^0_t(t)|_{D^1}^2+|\bu^0(t)|_{D^{2,q}}^2\Big)dt \le C|\bu_0|_{D^1\cap D^2}^2.
\]
\vskip 0.3cm
\noindent \textbf{Uniform Bound in a Higher Order Norm}
\vskip 0.3cm
Define
\[
 C_0:=|f_0|_{H^1_{\omega}}+|\rho_0|_{H^1\cap W^{1,q}}+|\bu_0|_{D^1\cap D^2}+|g|_{L^2}+1.
\]
Suppose that there exists $T_* \in (0,T]$ such that
\begin{equation}\label{eq-indu-assup}
 \sup_{0\le t \le T_*}\Big(|\bu^n(t)|_{D^1}+\beta^{-1}|\bu^n(t)|_{D^2}\Big)+\int_0^{T_*} \Big(|\bu^n_t(t)|_{D^1}^2+|\bu^n(t)|_{D^{2,q}}^2\Big)dt \le C_1, \quad n\in \bbn,
\end{equation}
where $\beta$ and $C_1$ are to be determined later. Next we prove by induction that \eqref{eq-indu-assup} holds for all  $n\in \bbn$.

Multiplying $\eqref{eq-cs-ns-appro}_2$ by $r (\rho^{n+1})^{r-1}$, $2\le r \le 6$, we have
\begin{equation}\label{eq-appro-massr}
 \frac{\partial}{\partial t}(\rho^{n+1})^{r}+\nabla \cdot [(\rho^{n+1})^{r}\bu]=-(r-1)(\rho^{n+1})^{r}\nabla\cdot \bu^n.
\end{equation}
Integrating \eqref{eq-appro-massr} over $\bbr^3$ gives
\begin{equation}\label{eq-appro-massr-gron}
  \frac{d}{dt}|\rho^{n+1}(t)|_{L^r}^r \le C(r)|\nabla \bu^n(t)|_{L^{\infty}}|\rho^{n+1}(t)|_{L^r}^r.
\end{equation}
Applying $\nabla$ to $\eqref{eq-cs-ns-appro}_2$ leads to
\begin{equation}\label{eq-appro-massdifone}
(\nabla \rho^{n+1})_t +\nabla \bu^n\cdot \nabla \rho^{n+1} +\bu^n\cdot \nabla\nabla \rho^{n+1}+\nabla \rho^{n+1}\nabla\cdot \bu^n+\rho^{n+1}\nabla \nabla\cdot \bu^n=0.
\end{equation}
Multiplying \eqref{eq-appro-massdifone} by $r|\nabla \rho^{n+1}|^{r-2}\nabla \rho^{n+1}$, we obtain
\begin{equation}\label{eq-appro-massdifoner}
  \begin{gathered}
   (|\nabla \rho^{n+1}|^{r})_t +\nabla\cdot(|\nabla \rho^{n+1}|^{r}\bu^n)=-r|\nabla \rho^{n+1}|^{r-2}\nabla \rho^{n+1}\cdot \nabla\bu^n \cdot \nabla \rho^{n+1}\\-(r-1)|\nabla \rho^{n+1}|^{r}\nabla\cdot \bu^n -r\rho^{n+1}|\nabla \rho^{n+1}|^{r-2}\nabla \rho^{n+1}\cdot\nabla \nabla\cdot \bu^n.
  \end{gathered}
\end{equation}
Integrating \eqref{eq-appro-massdifoner} over $\bbr^3$ gives rise to
\begin{equation}\label{eq-appro-massdifoner-gron}
  \begin{aligned}
    \frac{d}{dt}|\nabla \rho^{n+1}(t)|^{r}_{L^r} \le& C(r)|\nabla\bu^n(t)|_{L^{\infty}}|\nabla \rho^{n+1}(t)|^{r}_{L^r} +C(r)|\rho^{n+1}(t)|_{L^p}|\nabla \rho^{n+1}(t)|^{r-1}_{L^r}|\nabla^2\bu^n(t)|_{L^q}\\
    \le&C(r)|\nabla\bu^n(t)|_{L^{\infty}}|\nabla \rho^{n+1}(t)|^{r}_{L^r} +C(r)|\rho^{n+1}(t)|_{W^{1,r}}|\nabla \rho^{n+1}(t)|^{r-1}_{L^r}|\nabla^2\bu^n(t)|_{L^q},
  \end{aligned}
\end{equation}
where $p$, $q$ and $r$ satisfy
\[
 \frac{1}{p}+\frac{r-1}{r}+\frac{1}{q}=1,
\]
and we have used the Sobolev inequality
\[
  |\rho^{n+1}(t)|_{L^p}\le C|\rho^{n+1}(t)|_{W^{1,r}}.
\]
Combining \eqref{eq-appro-massr-gron} and \eqref{eq-appro-massdifoner-gron}, we deduce that
\begin{equation}\label{eq-appro-masswoner-gron}
  \begin{aligned}
    \frac{d}{dt}|\rho^{n+1}(t)|_{W^{1,r}} \le& C(r)|\nabla\bu^n(t)|_{L^{\infty}}|\rho^{n+1}(t)|_{W^{1,r}} +C(r)|\nabla^2\bu^n(t)|_{L^q}|\nabla \rho^{n+1}(t)|_{W^{1,r}}\\
    \le&C(r)|\nabla\bu^n(t)|_{W^{1,q}}|\rho^{n+1}(t)|_{W^{1,r}}.
  \end{aligned}
\end{equation}
Solving the above Gronwall inequality gives
\begin{equation}\label{eq-appro-masswoner-norm}
  |\rho^{n+1}(t)|_{W^{1,r}} \le  |\rho_0|_{W^{1,r}}\exp\left(\int_0^t C(r)|\nabla\bu^n(\tau)|_{W^{1,q}}d\tau\right), \quad 0\le t \le T.
\end{equation}
It is easy to see that
\begin{equation}\label{eq-appro-rhot-norm}
  \begin{aligned}
    |\rho^{n+1}_t(t)|_{L^2\cap L^q}\le& |\bu^n(t)|_{L^{\infty}}|\nabla\rho^{n+1}(t)|_{L^2\cap L^q}+|\rho^{n+1}(t)|_{L^{\infty}}|\nabla\bu^{n}(t)|_{L^2\cap L^q}\\
    \le&C|\bu^n(t)|_{D^1\cap D^2}|\nabla\rho^{n+1}(t)|_{L^2\cap L^q}+C|\rho^{n+1}(t)|_{W^{1,q}}|\nabla\bu^{n}(t)|_{H^1}\\
    \le&C\Big(|\nabla\rho^{n+1}(t)|_{L^2\cap L^q}+|\rho^{n+1}(t)|_{W^{1,q}}\Big) |\nabla\bu^{n}(t)|_{H^1}.
  \end{aligned}
\end{equation}
Using the induction hypothesis \eqref{eq-indu-assup} and taking $T_1:=T_1(q,C_1)$ suitably small, we infer from \eqref{eq-appro-masswoner-norm}, \eqref{eq-appro-rhot-norm} and Proposition \ref{prop-kine-cs-wp} (1)-(2) that
\begin{equation}\label{eq-appro-rhoandrhot-norm}
  \begin{aligned}
    &\sup_{0\le t \le T_1}|\rho^{n+1}(t)|_{H^1 \cap W^{1,q}}\le C|\rho_0|_{H^1\cap W^{1,q}} \le C C_0,\\
    &\sup_{0\le t \le T_1}|\rho^{n+1}_t(t)|_{L^2 \cap L^q}\le C(1+\beta)C_0 C_1,\\
    &\sup_{0\le t \le T_1}|f^{n+1}(t)|_{H^1_{\omega}}\le C|f_0|_{H^1_{\omega}}\le C C_0.
  \end{aligned}
\end{equation}
Differentiating $\eqref{eq-cs-ns-appro}_3$ with respect to $t$, we deduce that
\begin{equation}\label{eq-appro-momdift}
  \begin{gathered}
  \Big(\rho^{n+1} \bu^{n+1}_t\Big)_t+ \Big(\rho^{n+1} \bu^{n} \cdot\nabla \bu^{n+1}\Big)_t+\Big(\nabla P(\rho^{n+1})\Big)_t=\mu \Delta \bu^{n+1}_t +(\mu+\lambda)\nabla \nabla \cdot \bu^{n+1}_t\\ +\Bigg(\int_{\bbr^3}f^{n+1}(\bv-\bu^{n+1})d\bv\Bigg)_t, \quad \text{in $\mathcal{D}'((0,T)\times \bbr^3\times \bbr^3)$}.
  \end{gathered}
\end{equation}
Take $\bu^{n+1}_t$ as the test function. It follows from \eqref{eq-appro-momdift} that
\begin{equation}\label{eq-appro-dif-ns-ener}
  \begin{aligned}
    &\frac12 \frac{d}{dt}\left|\sqrt{\rho^{n+1}} \bu^{n+1}_t\right|_{L^2}^2 +\mu\left|\nabla \bu^{n+1}_t\right|_{L^2}^2 +(\mu+\lambda)\left|\nabla \cdot \bu^{n+1}_t\right|_{L^2}^2\\
    =&-\int_{\bbr^3}2\rho^{n+1} \bu^{n} \cdot\nabla \bu^{n+1}_t\cdot \bu^{n+1}_t d\bx-\int_{\bbr^3}\rho^{n+1}_t \bu^{n} \cdot\nabla \bu^{n+1}\cdot \bu^{n+1}_t d\bx\\
    &-\int_{\bbr^3}\rho^{n+1} \bu^{n}_t \cdot\nabla \bu^{n+1}\cdot \bu^{n+1}_t d\bx-\int_{\bbr^3}\Big(\nabla P(\rho^{n+1})\Big)_t \cdot \bu^{n+1}_t d\bx\\
    &+\int_{\bbr^3}\Bigg(\int_{\bbr^3}f^{n+1}(\bv-\bu^{n+1})d\bv\Bigg)_t \cdot \bu^{n+1}_t d\bx\\
    =:&\sum_{i=1}^5 J_i.
  \end{aligned}
\end{equation}
We estimate each $J_i$ $(1\le i\le 5)$ as follows.
\begin{equation*}
  \begin{aligned}
    |J_1|=&\left|\int_{\bbr^3}2\rho^{n+1} \bu^{n} \cdot\nabla \bu^{n+1}_t\cdot \bu^{n+1}_t d\bx\right|\\
    \le&2|\rho^{n+1}|_{L^{\infty}}^{\frac12}|\bu^{n}|_{L^{\infty}}\left|\sqrt{\rho^{n+1}} \bu^{n+1}_t\right|_{L^2}\left|\nabla \bu^{n+1}_t\right|_{L^2}\\
    \le& \frac{\mu}{14}\left|\nabla \bu^{n+1}_t\right|_{L^2}^2+ C|\rho^{n+1}|_{L^{\infty}}|\bu^{n}|_{L^{\infty}}^2 \left|\sqrt{\rho^{n+1}} \bu^{n+1}_t\right|_{L^2}^2;\\[0.4cm]
    |J_2|=&\left|\int_{\bbr^3}\rho^{n+1}_t \bu^{n} \cdot\nabla \bu^{n+1}\cdot \bu^{n+1}_t d\bx\right|\\
    \le&|\bu^{n}|_{L^{\infty}}|\rho^{n+1}_t|_{L^3}\left|\nabla \bu^{n+1}\right|_{L^2}\left|\nabla \bu^{n+1}_t\right|_{L^2}\\
    \le& \frac{\mu}{14}\left|\nabla \bu^{n+1}_t\right|_{L^2}^2+ C|\bu^{n}|_{L^{\infty}}^2|\rho^{n+1}_t|_{L^3}^2
    \left|\nabla \bu^{n+1}\right|_{L^2}^2;\\[0.4cm]
     |J_3|=&\left|\int_{\bbr^3}\rho^{n+1} \bu^{n}_t \cdot\nabla \bu^{n+1}\cdot \bu^{n+1}_t d\bx\right|\\
    \le&|\rho^{n+1}|_{L^{\infty}}^{\frac34}|\bu^n_t|_{L^6}\left|\nabla \bu^{n+1}\right|_{L^2}\left|\sqrt{\rho^{n+1}} \bu^{n+1}_t\right|_{L^2}^{\frac12}\left|\nabla\bu^{n+1}_t\right|_{L^2}^{\frac12}\\
    \le& \frac{\mu}{14}\left|\nabla \bu^{n+1}_t\right|_{L^2}^2+ \eta |\nabla \bu^n_t|_{L^2}^2\left|\nabla \bu^{n+1}\right|_{L^2}^2 +C(\eta)|\rho^{n+1}|_{L^{\infty}}^3 \left|\sqrt{\rho^{n+1}} \bu^{n+1}_t\right|_{L^2}^2;\\[0.4cm]
    |J_4|=&\left|\int_{\bbr^3}\Big(\nabla P(\rho^{n+1})\Big)_t \cdot \bu^{n+1}_t d\bx\right|\\
    \le&|\Big(P(\rho^{n+1})\Big)_t|_{L^2}\left|\nabla \bu^{n+1}_t\right|_{L^2}\\
    \le& \frac{\mu}{14}\left|\nabla \bu^{n+1}_t\right|_{L^2}^2 +C(P')|\rho^{n+1}_t|_{L^2}^2;\\[4mm]
    J_5=&\int_{\bbr^3}\int_{\bbr^3}f^{n+1}_t\Big(\bv-\bu^{n+1}\Big)d\bv\cdot \bu^{n+1}_t d\bx-\int_{\bbr^3}\int_{\bbr^3}f^{n+1}d\bv \Big|\bu^{n+1}_t\Big|^2 d\bx\\
    \le&\int_{\bbr^3}\int_{\bbr^3}f^{n+1} \bv\otimes\bv d\bv : \nabla \bu^{n+1}_t d\bx-\int_{\bbr^3}\int_{\bbr^3}f^{n+1} \bv d\bv \cdot \nabla \Big(\bu^{n+1}\cdot\bu^{n+1}_t\Big) d\bx\\
    &+\int_{\bbr^3}\int_{\bbr^3}\Big(L[f^{n+1}]f^{n+1}+(\bu^n-\bv)f^{n+1}\Big)  d\bv \cdot \bu^{n+1}_t d\bx\\
    \le&\left|\int_{\bbr^3}f^{n+1} \bv^2 d\bv\right|_{L^2}\left|\nabla\bu^{n+1}_t\right|_{L^2}+2\left|\int_{\bbr^3}f^{n+1} \bv d\bv\right|_{L^3}\left|\nabla\bu^{n+1}\right|_{L^2}\left|\nabla\bu^{n+1}_t\right|_{L^2}\\
    &+C\left|f^{n+1}(1+\bv^2)^{\frac12}\right|_{L^1}\left|\int_{\bbr^3}f^{n+1}(1+\bv^2)^{\frac12}d\bv \right|_{L^{\frac65}}\left|\nabla\bu^{n+1}_t\right|_{L^2}\\
    &+|\bu^n|_{L^{\infty}}\left|\int_{\bbr^3}f^{n+1}d\bv \right|_{L^{\frac65}}\left|\nabla\bu^{n+1}_t\right|_{L^2}+\left|\int_{\bbr^3}f^{n+1}\bv d\bv \right|_{L^{\frac65}}\left|\nabla\bu^{n+1}_t\right|_{L^2}\\
    \le& \frac{\mu}{14}\left|\nabla \bu^{n+1}_t\right|_{L^2}^2+ C|f^{n+1}|_{H_{\omega}^1}^2\left|\nabla \bu^{n+1}\right|_{L^2}^2+ C\Big(1+|\bu^n|_{L^{\infty}}^2\Big)|f^{n+1}|_{L_{\omega}^2}^2+ C|f^{n+1}|_{L_{\omega}^2}^4,
  \end{aligned}
\end{equation*}
In the estimate of $J_5$, we have used the following inequalities.
\begin{equation*}
  \begin{aligned}
    \left|\int_{\bbr^3}f^{n+1} \bv^2 d\bv\right|_{L^2}=&\left|\int_{\bbr^3}f^{n+1} \bv^2 (1+\bv^2)^{\frac{\alpha}{4}}(1+\bv^2)^{-\frac{\alpha}{4}}d\bv\right|_{L^2}\\
    \le& \left|(1+\bv^2)^{-\frac{\alpha}{4}}\right|_{L^2}\left| f^{n+1} \bv^2 (1+\bv^2)^{\frac{\alpha}{4}}\right|_{L^2}\\
    \le& C|f^{n+1}|_{L_{\omega}^2};
  \end{aligned}
\end{equation*}
\begin{equation*}
  \begin{aligned}
    \left| f^{n+1} (1+\bv^2)^{\frac{1}{2}}\right|_{L^1}=&\left|\int_{\bbr^3}\int_{\bbr^3}f^{n+1} (1+\bv^2)^{\frac{1}{2}}(1+\bx^2+\bv^2)^{\frac{\alpha}{2}}(1+\bx^2+\bv^2)^{-\frac{\alpha}{2}}d\bx d\bv\right|\\
    \le& \left|(1+\bx^2+\bv^2)^{-\frac{\alpha}{2}}\right|_{L^2}|f^{n+1}|_{L_{\omega}^2}\\
    \le& C|f^{n+1}|_{L_{\omega}^2};
  \end{aligned}
\end{equation*}
\begin{equation*}
  \begin{aligned}
    \left|\int_{\bbr^3}f^{n+1} \bv d\bv\right|_{L^3}=&\left|\int_{\bbr^3}f^{n+1} \bv d\bv\right|_{L^1}^{\frac15}\left|\int_{\bbr^3}f^{n+1} \bv d\bv\right|_{L^6}^{\frac45}\\
    \le&\frac15 \left|\int_{\bbr^3}f^{n+1} \bv d\bv\right|_{L^1}+ \frac45 \left|\int_{\bbr^3}|\nabla f^{n+1}| |\bv| d\bv\right|_{L^2}\\
    \le &C|f^{n+1}|_{H_{\omega}^1};\\[3mm]
    \left|\int_{\bbr^3} f^{n+1} (1+\bv^2)^{\frac{1}{2}}d\bv\right|_{L^{\frac65}}\le &\left|\int_{\bbr^3}f^{n+1} (1+\bv^2)^{\frac{1}{2}}d\bv\right|_{L^1}^{\frac23}\left|\int_{\bbr^3}f^{n+1} (1+\bv^2)^{\frac{1}{2}} d\bv\right|_{L^2}^{\frac13}\\
    \le&\frac23 \left|\int_{\bbr^3}f^{n+1} (1+\bv^2)^{\frac{1}{2}} d\bv\right|_{L^1}+ \frac13 \left|\int_{\bbr^3} f^{n+1}(1+\bv^2)^{\frac{1}{2}} d\bv\right|_{L^2}\\
    \le &C|f^{n+1}|_{L_{\omega}^2}.
  \end{aligned}
\end{equation*}
Substituting the estimates on $J_i$ $(1\le i \le 5)$ into \eqref{eq-appro-dif-ns-ener}, we obtain
\begin{equation}\label{eq-appro-dif-ns-en-gron}
  \begin{aligned}
    &\frac12 \frac{d}{dt}\left|\sqrt{\rho^{n+1}} \bu^{n+1}_t\right|_{L^2}^2 +\frac{9}{14}\mu\left|\nabla \bu^{n+1}_t\right|_{L^2}^2\\
    \le&\bigg(C|\rho^{n+1}|_{L^{\infty}}|\bu^{n}|_{L^{\infty}}^2+C(\eta)|\rho^{n+1}|_{L^{\infty}}^3 \bigg)\left|\sqrt{\rho^{n+1}} \bu^{n+1}_t\right|_{L^2}^2\\
    &+\bigg(\eta |\nabla \bu^n_t|_{L^2}^2 +C|\bu^{n}|_{L^{\infty}}^2|\rho^{n+1}_t|_{L^3}^2+ C|f^{n+1}|_{H_{\omega}^1}^2\bigg)\left|\nabla \bu^{n+1}\right|_{L^2}^2\\
    &+C(P')|\rho^{n+1}_t|_{L^2}^2 +C\Big(1+|\bu^n|_{L^{\infty}}^2\Big)|f^{n+1}|_{L_{\omega}^2}^2+ C|f^{n+1}|_{L_{\omega}^2}^4.
  \end{aligned}
\end{equation}
Multiplying $\eqref{eq-cs-ns-appro}_3$ by $\bu^{n+1}_t$, and integrating the resulting equation over $\bbr^3$ lead to
\begin{equation}\label{eq-app-gut}
  \begin{aligned}
    &\frac{d}{dt}\left(\frac{\mu}{2}\left|\nabla \bu^{n+1}\right|_{L^2}^2+\frac{\mu+\lambda}{2}\left|\nabla \cdot \bu^{n+1}\right|_{L^2}^2\right)+\left|\sqrt{\rho^{n+1}} \bu^{n+1}_t\right|_{L^2}^2\\
    =&-\int_{\bbr^3}\rho^{n+1}\bu^{n}\cdot\nabla \bu^{n+1}\cdot \bu^{n+1}_t d\bx -\int_{\bbr^3}\nabla P\Big(\rho^{n+1}\Big)\cdot \bu^{n+1}_t d\bx\\
    &+\int_{\bbr^3}\int_{\bbr^3}f^{n+1}(\bv- \bu^{n+1})d\bv \cdot \bu^{n+1}_t d\bx\\
    =:&\sum_{i=1}^{3}K_i.
  \end{aligned}
\end{equation}
We estimate each $K_i$ $(1\le i \le 3)$ as follows.
\begin{equation*}
  \begin{aligned}
    |K_1|=&\left|\int_{\bbr^3}\rho^{n+1}\bu^{n}\cdot\nabla \bu^{n+1}\cdot \bu^{n+1}_t d\bx \right|\\
    \le&|\rho^{n+1}|_{L^{\infty}}^{\frac12}\left|\sqrt{\rho^{n+1}} \bu^{n+1}_t\right|_{L^2}\left|\nabla \bu^{n+1}\right|_{L^2}|\bu^n|_{L^{\infty}}\\
    \le& \frac12 \left|\sqrt{\rho^{n+1}} \bu^{n+1}_t\right|_{L^2}^2+ C|\rho^{n+1}|_{L^{\infty}}|\bu^n|_{L^{\infty}}^2\left|\nabla \bu^{n+1}\right|_{L^2}^2;\\[4mm]
    |K_2|=&\left|\int_{\bbr^3}\nabla P(\rho^{n+1})\cdot \bu^{n+1}_t d\bx \right|\\
    \le& |P(\rho^{n+1})|_2 \left|\nabla \bu^{n+1}_t\right|_{L^2}\\
    \le& \frac{\mu}{14}\left|\nabla \bu^{n+1}_t\right|_{L^2}^2+ C(P')|\rho^{n+1}|_{L^2}^2;\\[4mm]
    |K_3|=&\left|\int_{\bbr^3}\int_{\bbr^3}f^{n+1}(\bv- \bu^{n+1})d\bv \cdot \bu^{n+1}_t d\bx \right|\\
    \le& \left|\int_{\bbr^3}f^{n+1}\bv d\bv \right|_{L^{\frac65}}\left|\nabla \bu^{n+1}_t\right|_{L^2}+ \left|\int_{\bbr^3}f^{n+1} d\bv \right|_{L^{\frac32}}\left|\nabla \bu^{n+1}\right|_{L^2}\left|\nabla \bu^{n+1}_t\right|_{L^2}\\
    \le& \frac{\mu}{14}\left|\nabla \bu^{n+1}_t\right|_{L^2}^2+  C|f^{n+1}|_{L_{\omega}^2}^2\left|\nabla \bu^{n+1}\right|_{L^2}^2+ C|f^{n+1}|_{L_{\omega}^2}^2.
  \end{aligned}
\end{equation*}
Here in the estimate of $K_3$, we have used the inequality
\[
\begin{aligned}
 \left|\int_{\bbr^3} f^{n+1} \bv d\bv\right|_{L^{\frac32}}\le &\left|\int_{\bbr^3}f^{n+1} (1+\bv^2)^{\frac{1}{2}}d\bv\right|_{L^1}^{\frac13}\left|\int_{\bbr^3}f^{n+1} (1+\bv^2)^{\frac{1}{2}} d\bv\right|_{L^2}^{\frac23}\\
    \le&\frac13 \left|\int_{\bbr^3}f^{n+1} (1+\bv^2)^{\frac{1}{2}} d\bv\right|_{L^1}+ \frac23 \left|\int_{\bbr^3} f^{n+1}(1+\bv^2)^{\frac{1}{2}} d\bv\right|_{L^2}\\
    \le &C|f^{n+1}|_{L_{\omega}^2}.
\end{aligned}
\]
Substituting these estimates into \eqref{eq-app-gut}, we arrive at
\begin{equation}\label{eq-app-gut-gron}
  \begin{aligned}
    &\frac{d}{dt}\left(\frac{\mu}{2}\left|\nabla \bu^{n+1}\right|_{L^2}^2+\frac{\mu+\lambda}{2}\left|\nabla \cdot \bu^{n+1}\right|_{L^2}^2\right)+\frac12\left|\sqrt{\rho^{n+1}} \bu^{n+1}_t\right|_{L^2}^2\\
    \le&\bigg(C|\rho^{n+1}|_{L^{\infty}}|\bu^n|_{L^{\infty}}^2+  C|f^{n+1}|_{L_{\omega}^2}^2\bigg)\left|\nabla \bu^{n+1}\right|_{L^2}^2 +\frac{\mu}{7}\left|\nabla \bu^{n+1}_t\right|_{L^2}^2\\
    &+ C(P')|\rho^{n+1}|_{L^2}^2 +C|f^{n+1}|_{L_{\omega}^2}^2.
  \end{aligned}
\end{equation}
Combining \eqref{eq-appro-dif-ns-en-gron} with \eqref{eq-app-gut-gron}, we deduce that
\begin{equation}\label{eq-appro-en-gv-gron}
  \begin{aligned}
    &\frac{d}{dt}\left(\frac12 \left|\sqrt{\rho^{n+1}} \bu^{n+1}_t\right|_{L^2}^2 +\frac{\mu}{2}\left|\nabla \bu^{n+1}\right|_{L^2}^2+\frac{\mu+\lambda}{2}\left|\nabla \cdot \bu^{n+1}\right|_{L^2}^2\right)+\frac{\mu}{2}\left|\nabla \bu^{n+1}_t\right|_{L^2}^2\\
    \le&\bigg(C|\rho^{n+1}|_{L^{\infty}}|\bu^{n}|_{L^{\infty}}^2+C(\eta)|\rho^{n+1}|_{L^{\infty}}^3 \bigg)\left|\sqrt{\rho^{n+1}} \bu^{n+1}_t\right|_{L^2}^2\\
    &+\bigg(\eta |\nabla \bu^n_t|_{L^2}^2 +C|\bu^{n}|_{L^{\infty}}^2|\rho^{n+1}_t|_{L^3}^2+ C|f^{n+1}|_{H_{\omega}^1}^2+C|\rho^{n+1}|_{L^{\infty}}|\bu^n|_{L^{\infty}}^2+  C|f^{n+1}|_{L_{\omega}^2}^2\bigg)\left|\nabla \bu^{n+1}\right|_{L^2}^2\\
    &+C(P')|\rho^{n+1}_t|_{L^2}^2 +C(P')|\rho^{n+1}|_{L^2}^2 +C\Big(1+|\bu^n|_{L^{\infty}}^2\Big)|f^{n+1}|_{L_{\omega}^2}^2+ C|f^{n+1}|_{L_{\omega}^2}^4.
  \end{aligned}
\end{equation}
By the compatibility condition, we infer that
\[
 \limsup_{t \to 0}\left|\sqrt{\rho^{n+1}} \bu^{n+1}_t(t)\right|_{L^2}\le |g|_{L^2}+|\rho_0|_{L^{\infty}}^{\frac12}|\bu_0|_{L^{\infty}}|\nabla \bu_0|_{L^2}\le CC_0^{\frac52}.
\]
Take $\eta$ and then $T_2:=T_2(\eta, \beta, C_0, C_1)$ suitably small. We obtain by solving the Gronwall inequality \eqref{eq-appro-en-gv-gron} that
\begin{equation}\label{eq-app-en-gu-bd}
  \sup_{0\le t\le T_2}\left(\left|\sqrt{\rho^{n+1}} \bu^{n+1}_t(t)\right|_{L^2}^2 +\left|\nabla \bu^{n+1}(t)\right|_{L^2}^2\right)+\int_0^{T_2} \left|\nabla \bu^{n+1}_t(t)\right|_{L^2}^2dt \le CC_0^5.
\end{equation}
Using the elliptic estimate, we have
\begin{equation}\label{eq-app-usec-nm}
  \begin{aligned}
    |\bu^{n+1}|_{D^2}\le& C\bigg(|\rho^{n+1} \bu^{n+1}_t|_{L^2} +|\rho^{n+1}\bu^n\cdot \nabla\bu^{n+1}|_{L^2}\\&+|\nabla P(\rho^{n+1})|_{L^2}+\left|\int_{\bbr^3}f^{n+1}(\bv- \bu^{n+1})d\bv \right|_{L^2} \bigg)\\
    \le& C|\rho^{n+1}|_{L^{\infty}}^{\frac12}\left|\sqrt{\rho^{n+1}} \bu^{n+1}_t\right|_{L^2} +C|\rho^{n+1}|_{L^{\infty}}|\bu^n|_{L^{\infty}}\left|\nabla \bu^{n+1}\right|_{L^2}\\ &+C(P')|\nabla\rho^{n+1}|_{L^2}
    +C|f^{n+1}|_{L_{\omega}^2}+C|f^{n+1}|_{H_{\omega}^1}\left|\nabla \bu^{n+1}\right|_{L^2}\\
    \le& CC_0^{\frac72} +CC_0^{\gamma}+C\beta^{\frac12}C_0^{\frac72}C_1\\
    \le& C \beta^{\frac12}C_0^{\gamma+\frac72}C_1;
  \end{aligned}
\end{equation}
\begin{equation}\label{eq-app-usecsix-nm}
  \begin{aligned}
    |\bu^{n+1}|_{D^{2,6}}\le& C\bigg(|\rho^{n+1} \bu^{n+1}_t|_{L^6} +|\rho^{n+1}\bu^n\cdot \nabla\bu^{n+1}|_{L^6}\\&+|\nabla P(\rho^{n+1})|_{L^6}+\left|\int_{\bbr^3}f^{n+1}(\bv- \bu^{n+1})d\bv \right|_{L^6} \bigg)\\
    \le& C|\rho^{n+1}|_{L^{\infty}}\left|\nabla \bu^{n+1}_t\right|_{L^2} +C|\rho^{n+1}|_{L^{\infty}}|\bu^n|_{L^{\infty}}\left|\nabla \bu^{n+1}\right|_{H^1}\\ &+C(P')|\nabla\rho^{n+1}|_{L^6}
    +C|\nabla_{\bx}f^{n+1}|_{L_{\omega}^2}+C|\nabla_{\bx}f^{n+1}|_{L_{\omega}^2}|\bu^{n+1}|_{L^{\infty}}\\
    \le& C|\rho^{n+1}|_{L^{\infty}}\left|\nabla \bu^{n+1}_t\right|_{L^2} +C|\rho^{n+1}|_{L^{\infty}}|\bu^n|_{D^1\cap D^2}\left|\nabla \bu^{n+1}\right|_{H^1}\\ &+C(P')|\nabla\rho^{n+1}|_{L^6}
    +C|\nabla_{\bx}f^{n+1}|_{L_{\omega}^2}+C|\nabla_{\bx}f^{n+1}|_{L_{\omega}^2}|\bu^{n+1}|_{D^1\cap D^2}.
  \end{aligned}
\end{equation}
By interpolation, we know
\[
  |\bu^{n+1}|_{D^{2,q}}^2 \le C\Big( |\bu^{n+1}|_{D^{2}}^2+|\bu^{n+1}|_{D^{2,6}}^2\Big), \quad 3< q \le 6.
\]
Take $T_3:=T_3(\beta, C_0, C_1)$ suitably small and $T_3\le T_2$. We deduce by using \eqref{eq-app-en-gu-bd} that
\begin{equation}\label{eq-app-gusecq-bd}
 \begin{aligned}
   \int_0^{T_3} |\bu^{n+1}_t(t)|_{D^{2,q}}^2 dt \le& \sup_{0\le t\le T_3}|\rho^{n+1}(t)|_{L^{\infty}}^2\int_0^{T_3} \left|\nabla \bu^{n+1}_t(t)\right|_{L^2}^2dt + CC_0^7\\
   \le& CC_0^7.
  \end{aligned}
\end{equation}
Set
\[
  T_*:=\min \{T_1, T_2, T_3\}, \quad \beta:=CC_0^{2\gamma+7}, \quad C_1:=CC_0^7.
\]
Combining \eqref{eq-app-en-gu-bd}, \eqref{eq-app-usec-nm} and \eqref{eq-app-gusecq-bd}, we obtain
\begin{equation}\label{eq-indu-next}
 \sup_{0\le t \le T_*}\Big(|\bu^{n+1}(t)|_{D^1}+\beta^{-1}|\bu^{n+1}(t)|_{D^2}\Big)+\int_0^{T_*} \Big(|\bu^{n+1}_t(t)|_{D^1}^2+|\bu^{n+1}(t)|_{D^{2,q}}^2\Big)dt \le C_1.
\end{equation}
It is apparent that $\bu^0$ satisfy the induction assumption \eqref{eq-indu-assup}. Thus, by induction, we conclude that \eqref{eq-indu-next} holds for all $n \in \bbn$.
\vskip 3mm
\noindent \textbf{Convergence in a Lower Order Norm}
\vskip 3mm
Define
\[
 \overline{f}^{n+1}:=f^{n+1}-f^n,\quad \overline{\rho}^{n+1}:=\rho^{n+1}-\rho^n,
\]
\[
 \overline{\bu}^{n+1}:=\bu^{n+1}-\bu^n,\quad \overline{P}^{n+1}:=P(\rho^{n+1})-P(\rho^n).
\]
It follows from $\eqref{eq-cs-ns-appro}_3$ that
\begin{equation}\label{eq-app-nsmt-dif}
  \begin{aligned}
    &\rho^{n+1}\overline{\bu}^{n+1}_t +\rho^{n+1}\bu^n\cdot\nabla\overline{\bu}^{n+1} +\nabla\overline{P}^{n+1}\\
    =&\mu\Delta\overline{\bu}^{n+1} +(\mu+\lambda)\nabla\nabla\cdot\overline{\bu}^{n+1} -\int_{\bbr^3}f^{n+1}\overline{\bu}^{n+1}d\bv \\
    &-\overline{\rho}^{n+1}\bu^n_t-\Big(\rho^{n+1}\overline{\bu}^n +\overline{\rho}^{n+1}\bu^{n-1}\Big)\cdot\nabla\bu^n +\int_{\bbr^3}\overline{f}^{n+1}(\bv-\bu^n)d\bv
  \end{aligned}
\end{equation}
Multiplying \eqref{eq-app-nsmt-dif} by $\overline{\bu}^{n+1}$, and integrating the resulting equation over $\bbr^3$ lead to
\begin{equation}\label{eq-app-nsmt-dif-en}
  \begin{aligned}
    &\frac12\frac{d}{dt}\left| \sqrt{\rho^{n+1}}\overline{\bu}^{n+1}\right|_{L^2}^2 +\mu\left|\nabla\overline{\bu}^{n+1}\right|_{L^2}^2 +(\mu+\lambda)\left|\nabla\cdot\overline{\bu}^{n+1}\right|_{L^2}^2\\
    =& -\int_{\bbr^3}\nabla\overline{P}^{n+1}\cdot\overline{\bu}^{n+1} d\bx -\int_{\bbr^3}\int_{\bbr^3}f^{n+1}d\bv|\overline{\bu}^{n+1}|^2d\bx\\
    &-\int_{\bbr^3}\overline{\rho}^{n+1}\bu^n_t\cdot\overline{\bu}^{n+1}d\bx -\int_{\bbr^3}\rho^{n+1}\overline{\bu}^n\cdot\nabla\bu^n\cdot\overline{\bu}^{n+1}d\bx \\
    &-\int_{\bbr^3}\overline{\rho}^{n+1}\bu^{n-1}\cdot\nabla\bu^n\cdot\overline{\bu}^{n+1}d\bx  +\int_{\bbr^3}\int_{\bbr^3}\overline{f}^{n+1}(\bv-\bu^n)d\bv\cdot\overline{\bu}^{n+1}d\bx\\
    =:&\sum_{i=1}^{6}M_i.
  \end{aligned}
\end{equation}
We estimate each $M_i$ $(1 \le i \le 6)$ as follows.
\begin{equation*}
  \begin{aligned}
    |M_1|=& \left|\int_{\bbr^3}\nabla\overline{P}^{n+1}\cdot\overline{\bu}^{n+1} d\bx\right|\\
    \le& \left|\overline{P}^{n+1}\right|_{L^2}\left|\nabla\overline{\bu}^{n+1}\right|_{L^2} \le \frac{\mu}{8}\left|\nabla\overline{\bu}^{n+1}\right|_{L^2}^2 +C(P')\left|\overline{\rho}^{n+1}\right|_{L^2}^2;\\[4mm]
    M_2=&-\int_{\bbr^3}\int_{\bbr^3}f^{n+1}d\bv|\overline{\bu}^{n+1}|^2d\bx\le 0;\\[4mm]
    |M_3|=& \left|\int_{\bbr^3}\overline{\rho}^{n+1}\bu^n_t\cdot\overline{\bu}^{n+1}d\bx\right|\\
    \le& \left|\overline{\rho}^{n+1}\right|_{L^{\frac32}} |\nabla\bu^n_t|_{L^2} \left|\nabla\overline{\bu}^{n+1}\right|_{L^2}\\
    \le& \frac{\mu}{8}\left|\nabla\overline{\bu}^{n+1}\right|_{L^2}^2 +C|\nabla\bu^n_t|_{L^2}^2 \left|\overline{\rho}^{n+1}\right|_{L^{\frac32}}^2;\\[4mm]
    |M_4|=& \left|\int_{\bbr^3}\rho^{n+1}\overline{\bu}^n\cdot\nabla\bu^n\cdot\overline{\bu}^{n+1}d\bx\right|\\
    \le& |\rho^{n+1}|_{L^{\infty}}^{\frac12} \left| \sqrt{\rho^{n+1}}\overline{\bu}^{n+1}\right|_{L^2} \left|\nabla\overline{\bu}^n\right|_{L^2} |\nabla\bu^n|_{L^3}\\
    \le& \e \left|\nabla\overline{\bu}^n\right|_{L^2}^2 +C(\e)|\rho^{n+1}|_{L^{\infty}} |\nabla\bu^n|_{L^3}^2 \left| \sqrt{\rho^{n+1}}\overline{\bu}^{n+1}\right|_{L^2}^2;\\[4mm]
    |M_5|=& \left|\int_{\bbr^3}\overline{\rho}^{n+1}\bu^{n-1}\cdot\nabla\bu^n\cdot\overline{\bu}^{n+1}d\bx \right|\\
    \le& \left| \overline{\rho}^{n+1}\right|_{L^2}|\bu^{n-1}|_{L^{\infty}}|\nabla\bu^n|_{L^3}  \left|\nabla\overline{\bu}^{n+1}\right|_{L^2} \\
    \le& \frac{\mu}{8}\left|\nabla\overline{\bu}^{n+1}\right|_{L^2}^2 +C|\bu^{n-1}|_{L^{\infty}}^2|\nabla\bu^n|_{L^3}^2 \left|\overline{\rho}^{n+1}\right|_{L^2}^2;\\[4mm]
    |M_6|=&\left|\int_{\bbr^3}\int_{\bbr^3}\overline{f}^{n+1}(\bv-\bu^n)d\bv\cdot\overline{\bu}^{n+1}d\bx \right|\\
    \le& \frac{\mu}{8}\left|\nabla\overline{\bu}^{n+1}\right|_{L^2}^2+ C\left|\int_{\bbr^3}\overline{f}^{n+1}\bv d\bv\right|_{L^{\frac65}}^2 +C|\bu^n|_{L^{\infty}}^2\left|\int_{\bbr^3}\overline{f}^{n+1} d\bv\right|_{L^{\frac65}}^2\\
    \le& \frac{\mu}{8}\left|\nabla\overline{\bu}^{n+1}\right|_{L^2}^2 +C\Big(1+|\bu^n|_{L^{\infty}}^2\Big)\left|\overline{f}^{n+1} \left(1+ \bv^2\right)^{\frac{1+\beta}{2}}\right|_{L^{\frac65}}^2.
  \end{aligned}
\end{equation*}
In the estimate of $M_6$, we have used the following inequality
\[
\begin{aligned}
  &\left|\int_{\bbr^3}\Big|\overline{f}^{n+1}\Big|(1+ \bv^2)^{\frac12}d\bv\right|_{L^{\frac65}}\\
  \le& \left(\int_{\bbr^3}\bigg[\Big|\overline{f}^{n+1}\Big| \left(1+ \bv^2\right)^{\frac{1+\beta}{2}}\bigg]^{\frac65}d\bv d\bx \right)^{\frac56} \left(\int_{\bbr^3}(1+ \bv^2)^{-3\beta}d\bv \right)^{\frac16}\\
  \le& C\left|\overline{f}^{n+1} \left(1+ \bv^2\right)^{\frac{1+\beta}{2}}\right|_{L^{\frac65}}, \quad \beta>\frac12.
 \end{aligned}
\]
Define
\[
  \Lambda(\bv):= \left(1+ \bv^2\right)^{\frac{1+\beta}{2}}, \quad \beta>\frac12.
\]
Substituting these estimates into \eqref{eq-app-nsmt-dif-en}, we obtain
\begin{equation}\label{eq-app-nsmt-dif-engron}
  \begin{aligned}
    &\frac{d}{dt}\left| \sqrt{\rho^{n+1}}\overline{\bu}^{n+1}\right|_{L^2}^2 +\mu\left|\nabla\overline{\bu}^{n+1}\right|_{L^2}^2\\
    \le& C(\e)|\rho^{n+1}|_{L^{\infty}} |\nabla\bu^n|_{L^3}^2 \left| \sqrt{\rho^{n+1}}\overline{\bu}^{n+1}\right|_{L^2}^2 +\Big(C(P')+C|\bu^{n-1}|_{L^{\infty}}^2|\nabla\bu^n|_{L^3}^2\Big) \left|\overline{\rho}^{n+1}\right|_{L^2}^2\\
    &+C|\nabla\bu^n_t|_{L^2}^2 \left|\overline{\rho}^{n+1}\right|_{L^{\frac32}}^2 +C\Big(1+|\bu^n|_{L^{\infty}}^2\Big)\left|\overline{f}^{n+1}\Lambda\right|_{L^{\frac65}}^2 +\e \left|\nabla\overline{\bu}^n\right|_{L^2}^2.
  \end{aligned}
\end{equation}
It follows from $\eqref{eq-cs-ns-appro}_2$ that
\begin{equation}\label{eq-app-mas-dif}
  \overline{\rho}^{n+1}_t +\bu^n\cdot\nabla\overline{\rho}^{n+1} +\overline{\rho}^{n+1}\nabla\cdot\bu^n +\overline{\bu}^n\cdot\nabla\rho^n +\rho^n\nabla\cdot\overline{\bu}^n=0.
\end{equation}
Multiplying \eqref{eq-app-mas-dif} by $2 \overline{\rho}^{n+1}$, and integrating the resulting equation over $\bbr^3$, we obtain
\begin{equation}\label{eq-app-mas-dif-squ}
  \begin{aligned}
    \frac{d}{dt}\left| \overline{\rho}^{n+1}\right|_{L^2}^2=&\int_{\bbr^3}\left| \overline{\rho}^{n+1}\right|^2 \nabla \cdot \bu^n d\bx +\int_{\bbr^3}2 \overline{\rho}^{n+1}\overline{\bu}^n \cdot \nabla\rho^n d\bx +\int_{\bbr^3}2 \overline{\rho}^{n+1}\rho^n\nabla\cdot\overline{\bu}^n d\bx\\
    \le&\Big(|\nabla \bu^n|_{L^{\infty}} +C(\e)|\nabla\rho^n|_{L^3}^2 +C(\e)|\rho^n|_{L^{\infty}}^2\Big)\left| \overline{\rho}^{n+1}\right|_{L^2}^2 +\e \left|\nabla\overline{\bu}^n\right|_{L^2}^2.
  \end{aligned}
\end{equation}
Multiplying \eqref{eq-app-mas-dif} by $\frac32 \left|\overline{\rho}^{n+1}\right|^{\frac12}\text{sgn}\overline{\rho}^{n+1}$ gives
\begin{equation}\label{eq-app-mas-dif-frc}
  \begin{gathered}
    \bigg(\left|\overline{\rho}^{n+1}\right|^{\frac32}\bigg)_t +\nabla \cdot\bigg(\left|\overline{\rho}^{n+1}\right|^{\frac32}\bu^n\bigg) +\frac12 \left|\overline{\rho}^{n+1}\right|^{\frac32}\nabla\cdot\bu^n\\
    +\frac32 \left|\overline{\rho}^{n+1}\right|^{\frac12}\text{sgn}\overline{\rho}^{n+1}\overline{\bu}^n \cdot\nabla\rho^n +\frac32 \left|\overline{\rho}^{n+1}\right|^{\frac12}\text{sgn}\overline{\rho}^{n+1}\rho^n \nabla\cdot \overline{\bu}^n=0.
  \end{gathered}
\end{equation}
Integrating \eqref{eq-app-mas-dif-frc} over $\bbr^3$ leads to
\begin{equation}\label{eq-app-mas-dif-frcint}
  \frac{d}{dt}\left|\overline{\rho}^{n+1}\right|_{L^{\frac32}}^{\frac32}\le |\nabla \bu^n|_{L^{\infty}}\left|\overline{\rho}^{n+1}\right|_{L^{\frac32}}^{\frac32} +C\left|\overline{\rho}^{n+1}\right|_{L^{\frac32}}^{\frac12}\left|\nabla\overline{\bu}^n\right|_{L^2} |\nabla\rho^n|_{L^2}
\end{equation}
We deduce from \eqref{eq-app-mas-dif-frcint} that
\begin{equation}\label{eq-app-mas-dif-frcint-gron}
  \frac{d}{dt}\left|\overline{\rho}^{n+1}\right|_{L^{\frac32}}^2\le \Big(2|\nabla \bu^n|_{L^{\infty}} +C(\e)|\nabla\rho^n|_{L^2}^2\Big)\left|\overline{\rho}^{n+1}\right|_{L^{\frac32}}^2 +\e \left|\nabla\overline{\bu}^n\right|_{L^2}^2.
\end{equation}
It follows from $\eqref{eq-cs-ns-appro}_1$ that
\begin{equation}\label{eq-app-kcs-dif-frc}
  \begin{gathered}
    \Big(\overline{f}^{n+1}\Big)_t +\bv \cdot\nabla_{\bx} \overline{f}^{n+1} +\nabla_{\bv}\cdot\Big[L[f^{n+1}]\overline{f}^{n+1}+(\bu^n-\bv)\overline{f}^{n+1}\Big]\\
      +\nabla_{\bv}\cdot\Big[L[\overline{f}^{n+1}]f^n +f^n \overline{\bu}^n\Big]=0.
  \end{gathered}
\end{equation}
Recall that $\Lambda(\bv)= \left(1+ \bv^2\right)^{\frac{1+\beta}{2}}$, $\ \beta>\frac12$. Multiplying \eqref{eq-app-kcs-dif-frc} by $\Lambda(\bv)$, we deduce that
\begin{equation}\label{eq-app-kcswt-dif}
  \begin{aligned}
    &\Big(\overline{f}^{n+1}\Lambda\Big)_t +\bv \cdot\nabla_{\bx} \Big(\overline{f}^{n+1}\Lambda\Big) +\nabla_{\bv}\cdot\Big[L[f^{n+1}]\overline{f}^{n+1}\Lambda+(\bu^n-\bv)\overline{f}^{n+1}\Lambda\Big]\\
    =&L[f^{n+1}]\cdot\nabla_{\bv}\Lambda\overline{f}^{n+1} +(\bu^n-\bv)\cdot\nabla_{\bv}\Lambda\overline{f}^{n+1}\\
      &-\bigg(\nabla_{\bv}\cdot L[\overline{f}^{n+1}]f^n +L[\overline{f}^{n+1}]\cdot\nabla_{\bv} f^n \bigg)\Lambda -\overline{\bu}^n \cdot\nabla_{\bv} f^n \Lambda.
  \end{aligned}
\end{equation}
Multiplying \eqref{eq-app-kcswt-dif} by $\frac65 \left|\overline{f}^{n+1}\Lambda\right|^{\frac15}\text{sgn}\overline{f}^{n+1}$ leads to
\begin{equation}\label{eq-app-kcswt-dif-frac}
  \begin{aligned}
    &\frac{\partial}{\partial t}\left|\overline{f}^{n+1}\Lambda\right|^{\frac65} +\bv \cdot\nabla_{\bx} \left|\overline{f}^{n+1}\Lambda\right|^{\frac65} +\nabla_{\bv}\cdot\bigg[L[f^{n+1}]\left|\overline{f}^{n+1}\Lambda\right|^{\frac65} +(\bu^n-\bv)\left|\overline{f}^{n+1}\Lambda\right|^{\frac65}\bigg]\\
    =&-\frac15 \nabla\cdot L[f^{n+1}]\left|\overline{f}^{n+1}\Lambda\right|^{\frac65} +\frac35 \left|\overline{f}^{n+1}\Lambda\right|^{\frac65}\\
    &+\frac65 \left|\overline{f}^{n+1}\Lambda\right|^{\frac15}L[f^{n+1}]\cdot\nabla_{\bv}\Lambda \left|\overline{f}^{n+1}\right| +\frac65 \left|\overline{f}^{n+1}\Lambda\right|^{\frac15}(\bu^n-\bv)\cdot\nabla_{\bv}\Lambda \left|\overline{f}^{n+1}\right|\\
    &-\frac65 \left|\overline{f}^{n+1}\Lambda\right|^{\frac15}\text{sgn}\overline{f}^{n+1} \bigg(\nabla_{\bv}\cdot L[\overline{f}^{n+1}]f^n +L[\overline{f}^{n+1}]\cdot\nabla_{\bv} f^n \bigg)\Lambda\\ &-\frac65 \left|\overline{f}^{n+1}\Lambda\right|^{\frac15}\text{sgn}\overline{f}^{n+1}\overline{\bu}^n \cdot\nabla_{\bv} f^n \Lambda.
  \end{aligned}
\end{equation}
Integrating \eqref{eq-app-kcswt-dif-frac} over $\bbr^3\times \bbr^3$ gives
\begin{equation}\label{eq-app-kcswt-dif-frac-gron}
  \begin{aligned}
    &\frac{d}{d t}\left|\overline{f}^{n+1}\Lambda\right|_{L^{\frac65}}^{\frac65}\\
    =&\int_{\bbr^3}\int_{\bbr^3}\Bigg(-\frac15 \nabla\cdot L[f^{n+1}]\left|\overline{f}^{n+1}\Lambda\right|^{\frac65} +\frac35 \left|\overline{f}^{n+1}\Lambda\right|^{\frac65}\Bigg)d\bx d\bv \\
    &+\int_{\bbr^3}\int_{\bbr^3}\Bigg(\frac65 \left|\overline{f}^{n+1}\Lambda\right|^{\frac15}L[f^{n+1}]\cdot\nabla_{\bv}\Lambda \left|\overline{f}^{n+1}\right| +\frac65 \left|\overline{f}^{n+1}\Lambda\right|^{\frac15}(\bu^n-\bv)\cdot\nabla_{\bv}\Lambda \left|\overline{f}^{n+1}\right|\Bigg)d\bx d\bv\\
    &-\int_{\bbr^3}\int_{\bbr^3} \frac65 \left|\overline{f}^{n+1}\Lambda\right|^{\frac15}\text{sgn}\overline{f}^{n+1} \bigg(\nabla_{\bv}\cdot L[\overline{f}^{n+1}]f^n +L[\overline{f}^{n+1}]\cdot\nabla_{\bv} f^n \bigg)\Lambda d\bx d\bv\\ &-\int_{\bbr^3}\int_{\bbr^3} \frac65 \left|\overline{f}^{n+1}\Lambda\right|^{\frac15}\text{sgn}\overline{f}^{n+1}\overline{\bu}^n \cdot\nabla_{\bv} f^n \Lambda d\bx d\bv\\
    =:&\sum_{i=1}^4N_i.
  \end{aligned}
\end{equation}
We estimate each $N_i$ $(1\le i \le 4)$ as follows.
\begin{equation*}
  \begin{aligned}
     |N_1|\le& C|f^{n+1}|_{L^1}\left|\overline{f}^{n+1}\Lambda\right|_{L^{\frac65}}^{\frac65} +C\left|\overline{f}^{n+1}\Lambda\right|_{L^{\frac65}}^{\frac65};\\[3mm]
     |N_2|\le& C|f^{n+1}(1+\bv^2)^{\frac12}|_{L^1}\left|\overline{f}^{n+1}\Lambda\right|_{L^{\frac65}}^{\frac65} +C|\bu^n|_{L^{\infty}}\left|\overline{f}^{n+1}\Lambda\right|_{L^{\frac65}}^{\frac65};\\[3mm]
     |N_3|\le& C\left|\overline{f}^{n+1}\Lambda\right|_{L^{\frac65}}^{\frac15} \left|\overline{f}^{n+1}\right|_{L^1} |f^n\Lambda|_{L^{\frac65}} +C\left|\overline{f}^{n+1}\Lambda\right|_{L^{\frac65}}^{\frac15} \left|\overline{f}^{n+1}(1+\bv^2)^{\frac12}\right|_{L^1} |(1+\bv^2)^{\frac12}\nabla_{\bv}f^n\Lambda|_{L^{\frac65}}\\
     \le& C\bigg(|f^n\Lambda|_{L^{\frac65}} +|(1+\bv^2)^{\frac12}\nabla_{\bv}f^n\Lambda|_{L^{\frac65}}\bigg) \left|\overline{f}^{n+1}\Lambda\right|_{L^{\frac65}}^{\frac15} \left|\overline{f}^{n+1}(1+\bv^2)^{\frac12}\right|_{L^1}\\
     \le& C|f^n|_{H_{\omega}^1} \left|\overline{f}^{n+1}\Lambda\right|_{L^{\frac65}}^{\frac15} \left|\overline{f}^{n+1}(1+\bv^2)^{\frac12}\right|_{L^1};\\[3mm]
     |N_4|\le& C\left|\overline{f}^{n+1}\Lambda\right|_{L^{\frac65}}^{\frac15} \left|\nabla\overline{\bu}^n\right|_{L^2} |\nabla_{\bv}f^n\Lambda|_{L^{\frac32}}\\
     \le& C|\nabla_{\bv}f^n|_{L_{\omega}^2}\left|\overline{f}^{n+1}\Lambda\right|_{L^{\frac65}}^{\frac15} \left|\nabla\overline{\bu}^n\right|_{L^2}.
  \end{aligned}
\end{equation*}
In the estimates of $N_3$ and $N_4$, we have used the following inequalities.
\begin{equation*}
  \begin{aligned}
     |f^n\Lambda|_{L^{\frac65}}\le& \Bigg(\int_{\bbr^3}\int_{\bbr^3}|f^n|^2(1+\bv^2)^{1+\beta}(1+\bx^2+\bv^2)^{\frac{2}{3}\alpha} d\bx d\bv \Bigg)^{\frac12}|(1+\bx^2+\bv^2)^{-\frac{\alpha}{3}}|_{L^3}\\
     \le& C|f^n|_{L_{\omega}^2};
  \end{aligned}
\end{equation*}
\begin{equation*}
  \begin{aligned}
     &|(1+\bv^2)^{\frac12}\nabla_{\bv}f^n\Lambda|_{L^{\frac65}}\\
     \le& \Bigg(\int_{\bbr^3}\int_{\bbr^3}|\nabla_{\bv}f^n|^2(1+\bv^2)^{2+\beta} (1+\bx^2+\bv^2)^{\frac{2}{3}\alpha} d\bx d\bv \Bigg)^{\frac12}|(1+\bx^2+\bv^2)^{-\frac{\alpha}{3}}|_{L^3}\\
     \le& C|\nabla_{\bv}f^n|_{L_{\omega}^2};
   \end{aligned}
\end{equation*}
\begin{equation*}
  \begin{aligned}
      |\nabla_{\bv}f^n\Lambda|_{L^{\frac32}}
     \le& \Bigg(\int_{\bbr^3}\int_{\bbr^3}|\nabla_{\bv}f^n|^2(1+\bv^2)^{1+\beta} (1+\bx^2+\bv^2)^{\frac{\alpha}{3}} d\bx d\bv \Bigg)^{\frac12}|(1+\bx^2+\bv^2)^{-\frac{\alpha}{6}}|_{L^6}\\
     \le& C|\nabla_{\bv}f^n|_{L_{\omega}^2}.
  \end{aligned}
\end{equation*}
Substituting these estimates into \eqref{eq-app-kcswt-dif-frac-gron}, we deduce that
\begin{equation}\label{eq-app-kcswt-dif-fracsqu-gron}
  \begin{aligned}
    \frac{d}{d t}\left|\overline{f}^{n+1}\Lambda\right|_{L^{\frac65}}^2
    \le& \bigg(C +C|f^{n+1}(1+\bv^2)^{\frac12}|_{L^1} +C|\bu^n|_{L^{\infty}} +C(\e)|\nabla_{\bv}f^n|_{L_{\omega}^2}^2\bigg)\left|\overline{f}^{n+1}\Lambda\right|_{L^{\frac65}}^2\\
    &+\bigg(C|f^n|_{L_{\omega}^2}^2 +C|\nabla_{\bv}f^n|_{L_{\omega}^2}^2\bigg)\left|\overline{f}^{n+1}(1+\bv^2)^{\frac12}\right|_{L^1}^2 +\e \left|\nabla\overline{\bu}^n\right|_{L^2}^2.
  \end{aligned}
\end{equation}
Similarly, we have
\begin{equation}\label{eq-app-kcswt-dif-lonesqu-gron}
  \begin{aligned}
    \frac{d}{d t}\left|\overline{f}^{n+1}(1+\bv^2)^{\frac12}\right|_{L^1}^2
    \le& \bigg(C +C|f^{n+1}(1+\bv^2)^{\frac12}|_{L^1} +C|\bu^n|_{L^{\infty}} +C|\nabla_{\bv}f^n|_{L_{\omega}^2}\\ &\quad +C(\e)|\nabla_{\bv}f^n|_{L_{\omega}^2}^2\bigg)\left|\overline{f}^{n+1}(1+\bv^2)^{\frac12}\right|_{L^1}^2 +\e \left|\nabla\overline{\bu}^n\right|_{L^2}^2.
  \end{aligned}
\end{equation}
Define
\[
 F^{n+1}(t):=\left| \sqrt{\rho^{n+1}}\overline{\bu}^{n+1}\right|_{L^2}^2 +\left| \overline{\rho}^{n+1}\right|_{L^2}^2 +\left|\overline{\rho}^{n+1}\right|_{L^{\frac32}}^2 +\left|\overline{f}^{n+1}\Lambda\right|_{L^{\frac65}}^2 +\left|\overline{f}^{n+1}(1+\bv^2)^{\frac12}\right|_{L^1}^2.
\]
Combining \eqref{eq-app-nsmt-dif-engron}, \eqref{eq-app-mas-dif-squ}, \eqref{eq-app-mas-dif-frcint-gron}, \eqref{eq-app-kcswt-dif-fracsqu-gron} and \eqref{eq-app-kcswt-dif-lonesqu-gron} , we obtain
\begin{equation}\label{eq-app-dif-gron}
  \begin{aligned}
    &\frac{d}{dt}F^{n+1} +\mu\left|\nabla\overline{\bu}^{n+1}\right|_{L^2}^2\\
    \le& \bigg(C +C(P')+ C(\e)|\rho^{n+1}|_{L^{\infty}} |\nabla\bu^n|_{L^3}^2
    +C|\nabla \bu^n|_{L^{\infty}}+C|\bu^n|_{L^{\infty}}^2 \\
    &\ +C(\e)|\nabla\rho^n|_{L^2}^2 +C(\e)|\nabla\rho^n|_{L^3}^2
     +C(\e)|\rho^n|_{L^{\infty}}^2 +C|f^n|_{H^1_{\omega}}^2 +C|\nabla\bu^n_t|_{L^2}^2\bigg) F^{n+1} +5\e \left|\nabla\overline{\bu}^n\right|_{L^2}^2.
  \end{aligned}
\end{equation}
Solving the above Gronwall inequality in $[0, T_0]$ $(0<T_0\le T_*)$, we obtain
\begin{equation}\label{eq-app-dif-est}
  \sup_{0\le t\le T_0}F^{n+1}(t) +\mu\int_0^{T_0}\left|\nabla\overline{\bu}^{n+1}(t)\right|_{L^2}^2 dt
  \le A(\e, T_0) \int_0^{T_0}\left|\nabla\overline{\bu}^n(t)\right|_{L^2}^2 dt.
\end{equation}
where $A(\e, T_0)$ is given by
\[
 A(\e, T_0):=5\e\exp\left(\int_0^{T_0} \Big( C(\e, \beta, P', C_0, C_1)+C|\nabla\bu^n(t)|_{L^{\infty}}+C|\nabla\bu^n_t(t)|_{L^2}^2\Big) dt \right).
\]
We first choose $\e$ sufficiently small such that
\[
 5\e\exp\left(\int_0^{T_0}C|\nabla\bu^n_t(t)|_{L^2}^2 dt \right)\le \frac{\mu}{4},
\]
and then take $T_0$ suitably small, so that
\[
 \exp\left(\int_0^{T_0} \Big( C(\e, \beta, P', C_0, C_1)+C|\nabla\bu^n(t)|_{L^{\infty}}\Big) dt\right) \le 2.
\]
Thus, we have $A(\e, T_0)\le \frac{\mu}{2}$ and
\begin{equation}\label{eq-app-dif-est-tzro}
  \sup_{0\le t\le T_0}F^{n+1}(t) +\mu\int_0^{T_0}\left|\nabla\overline{\bu}^{n+1}(t)\right|_{L^2}^2 dt
  \le \frac{\mu}{2} \int_0^{T_0}\left|\nabla\overline{\bu}^n(t)\right|_{L^2}^2 dt.
\end{equation}
Summing \eqref{eq-app-dif-est-tzro} for all $n\in \bbn$ gives
\begin{equation}\label{eq-app-dif-est-sum}
  \sup_{0\le t\le T_0}\sum_{n=1}^{\infty}F^n(t) +\frac{\mu}{2}\sum_{n=1}^{\infty} \int_0^{T_0}\left|\nabla\overline{\bu}^n(t)\right|_{L^2}^2 dt
  \le \frac{\mu}{2} \int_0^{T_0}\left|\nabla\overline{\bu}^0(t)\right|_{L^2}^2 dt.
\end{equation}
We deduce from \eqref{eq-app-dif-est-sum} that there exists $(f, \rho, \bu)$ such that
\begin{equation}\label{eq-app-conver-lowoder}
  \begin{aligned}
    &f^n \to f, \quad\text{in $C([0, T_0]; L^1)$, as $n \to \infty$};\\
    &\rho^n \to \rho, \quad \text{in $C([0, T_0]; L^2)$, as $n \to \infty$};\\
    &\bu^n \to \bu, \quad \text{in $L^2(0, T_0; D^1)$, as $n \to \infty$}.
  \end{aligned}
\end{equation}
From \eqref{eq-app-conver-lowoder}, it is easy to show that $(f, \rho, \bu)$ veries \eqref{eq-cs-ns} in the sense of distributions.
\vskip 3mm
\noindent \textbf{Continuity in Time}
\vskip 3mm
By induction, we know \eqref{eq-appro-rhoandrhot-norm} and \eqref{eq-indu-next} hold for all $n\in \bbn$. Using uniqueness of the weak limit, we deduce by \eqref{eq-app-conver-lowoder} that
\begin{equation}\label{eq-app-conver-wek}
  \begin{aligned}
    f^n &\rightharpoonup f, \quad\text{weakly-$\star$ in $L^{\infty}(0, T_0; H^1_{\omega})$, as $n \to \infty$};\\
    \rho^n &\rightharpoonup \rho, \quad\text{weakly-$\star$ in $L^{\infty}(0, T_0; H^1\cap W^{1,q})$, as $n \to \infty$};\\
    \rho^n_t &\rightharpoonup \rho_t, \quad \text{weakly-$\star$ in $L^{\infty}(0, T_0; L^2\cap L^q)$, as $n \to \infty$};\\
    \bu^n &\rightharpoonup \bu, \quad \text{weakly-$\star$ in $L^{\infty}(0, T_0; D^1\cap D^2)$, as $n \to \infty$};\\
    \bu^n_t &\rightharpoonup \bu_t, \quad \text{weakly in $L^2(0, T_0; D^1)$, as $n \to \infty$};\\
    \bu^n &\rightharpoonup \bu, \quad \text{weakly in $L^2(0, T_0; D^{2,q})$, as $n \to \infty$}.
  \end{aligned}
\end{equation}
It follows from \eqref{eq-app-conver-wek} that
\begin{equation}\label{eq-app-conti-wek}
  \begin{aligned}
    &\rho \in C([0, T_0]; L^2\cap L^q)\cap C([0, T_0]; H^1\cap W^{1,q}-W), \\
    &\bu \in C([0, T_0]; D^1)\cap C([0, T_0]; D^2-W),\\
    &\bu_t \in L^2(0, T_0; D^1),
    \quad \bu \in L^2(0, T_0; D^{2,q}).
  \end{aligned}
\end{equation}
Using the regularity of $\bu$, we can also demonstrate that
\[
 f \in C([0, T_0]; H^1_{\omega})
\]
by the same proof as in Proposition \ref{prop-kine-cs-wp}. Similarly as the proof of \eqref{eq-appro-masswoner-gron}, we can show that
\begin{equation}\label{eq--masswoner-gron}
    \frac{d}{dt}|\rho(t)|_{W^{1,r}}
    \le C(r)|\nabla\bu(t)|_{W^{1,q}}|\rho(t)|_{W^{1,r}}.
\end{equation}
For any $t_1, t_2 \in [0, T_0]$ $(t_1\le t_2)$, it follows from \eqref{eq--masswoner-gron} that
\[
\begin{aligned}
 \Big||\rho(t_2)|_{W^{1,r}}-|\rho(t_1)|_{W^{1,r}} \Big|\le&\int_{t_1}^{t_2}C(r)|\nabla\bu(t)|_{W^{1,q}}|\rho(t)|_{W^{1,r}}dt\\
 \le& C(r, T_0, C_0, C_1)|t_2-t_1|^{\frac12}, \quad 2\le r \le 6.
\end{aligned}
\]
This, together with the fact that $\rho \in C([0, T_0]; H^1\cap W^{1,q}-W)$, implies that
\[
 \rho \in C([0, T_0]; H^1\cap W^{1,q}).
\]
By the regularity of $f$, $\rho$ and $\bu$, we can easily prove
\begin{equation}\label{eq-conti-ltwo}
  \begin{aligned}
    &\int_{\bbr^3}f(\bv-\bu)d\bv \in C([0, T_0]; L^2), \\
    &\nabla P \in C([0, T_0]; L^2), \\
    &\rho \bu_t \in L^2(0, T_0; H^1).
  \end{aligned}
\end{equation}
From $\eqref{eq-cs-ns}_3$, we infer that $(\rho \bu_t)_t \in L^2(0, T_0; H^{-1})$. This together with $\eqref{eq-conti-ltwo}_3$ gives
\begin{equation}\label{eq-conti-rutltwo}
 \rho \bu_t  \in C([0, T_0]; L^2).
\end{equation}
For $0\le t_1\le t_2\le T_0$, it follows from the elliptic estimate that
\begin{equation}\label{eq-conti-usec}
  \begin{aligned}
    |\bu(t_2)-\bu(t_1)|_{D^2}\le& C|\rho\bu_t(t_2)-\rho\bu_t(t_1)|_{L^2} +C|\rho\bu\cdot\nabla\bu(t_2)-\rho\bu\cdot\nabla\bu(t_1)|_{L^2}\\
     &+C|\nabla P(t_2)-\nabla P(t_1)|_{L^2} \\ &+C\left|\int_{\bbr^3}f(t_2)\Big(\bv-\bu(t_2)\Big)d\bv-\int_{\bbr^3}f(t_2) \Big(\bv-\bu(t_1)\Big)d\bv\right|_{L^2},
  \end{aligned}
\end{equation}
where
\begin{equation}\label{eq-conti-rugu}
  \begin{aligned}
    &|\rho\bu\cdot\nabla\bu(t_2)-\rho\bu\cdot\nabla\bu(t_1)|_{L^2}\\
    \le& |\rho(t_2)\bu(t_2)\cdot\Big(\nabla\bu(t_2)-\nabla\bu(t_1)\Big)|_{L^2} +|\rho(t_2)\Big(\bu(t_2)-\bu(t_1)\Big)\cdot\nabla\bu(t_1)|_{L^2}\\ &+|\Big(\rho(t_2)-\rho(t_1)\Big)\bu(t_1)\cdot\nabla\bu(t_1)|_{L^2}\\
    \le&|\rho(t_2)|_{L^{\infty}}|\nabla\bu(t_2)|_{L^2}|\nabla\bu(t_2)-\nabla\bu(t_1|^{\frac12}_{L^2} |\nabla\bu(t_2)-\nabla\bu(t_1|^{\frac12}_{H^1}\\
    &+|\rho(t_2)|_{L^{\infty}}|\nabla\bu(t_1)|_{L^3}|\nabla\bu(t_2)-\nabla\bu(t_1|_{L^2} +|\nabla\bu(t_1)|_{L^2}|\nabla\bu(t_1)|_{L^3}|\rho(t_2)-\rho(t_1|_{L^{\infty}}\\
    \le& C(C_0, C_1)|\rho(t_2)-\rho(t_1|_{W^{1,q}} +C(C_0, C_1)|\bu(t_2)-\bu(t_1|_{D^1} +\frac12 |\bu(t_2)-\bu(t_1|_{D^2}.
  \end{aligned}
\end{equation}
Substituting \eqref{eq-conti-rugu} into \eqref{eq-conti-usec}, we infer by $\eqref{eq-app-conti-wek}_2$, \eqref{eq-conti-ltwo} and \eqref{eq-conti-rutltwo} that
\begin{equation}\label{eq-conti-gtultwo}
  \bu \in C([0, T_0]; D^2).
\end{equation}
The uniqueness of strong solutions can be proved in the same way as in the proof of \eqref{eq-app-dif-gron}.
This completes the proof.$\hfill \square$
%%%%%%%%%%%%%%%%%%%%%%%%%%%%%%%%%%%%%%%%%%%%%%%%%%%%%%%%%%%%%%%%%%%%%%%%%%%%%%%%%%%%%%%%%%%%%%%%%%%%%%%%%%%%%%
%
%                      3.Sect Blowup Criterion for the Coupled System
%
%%%%%%%%%%%%%%%%%%%%%%%%%%%%%%%%%%%%%%%%%%%%%%%%%%%%%%%%%%%%%%%%%%%%%%%%%%%%%%%%%%%%%%%%%%%%%%%%%%%%%%%%%%%%%%
\section{Blowup Criterion for the Coupled System}\label{sec-blowup}
\setcounter{equation}{0}
In this section, we derive a blowup criterion  for the coupled system, which gives an insight into studying the existence of global-in-time strong solutions to the system \eqref{eq-cs-ns}. Our result shows that the $L^{\infty}$-norm of $\rho(t, \bx)$ in $[0, T^*)\times \bbr^3$ and $\int_0^{T^*}\Big(|\bu(t)|_{L^{\infty}}+|\nabla\bu(t)|_{L^{\infty}}^2\Big)dt$ control blowup of the strong solutions at $T^*$. The philosophy of the proof for Theorem \ref{thm-blowup} is that if the blowup mechanism is avoided, then we show that the strong solution can be extended beyond $T^*$, by using Theorem \ref{thm-loc-exist}. The following lemma is the elementary energy estimate for the strong solutions to \eqref{eq-cs-ns}.
Define the energy of the system as
\[
 E(t):=\int_{\bbr^3}\Bigg(\frac12 \rho(t,\bx)\bu^2(t,\bx)+\frac{P(\rho(t,\bx))}{\gamma-1}\Bigg)d\bx +\frac12 \int_{\bbr^6} f(t,\bx,\bv)\bv^2 d\bx d\bv,
\]
and the initial energy $E_0:=E(0)$.
\begin{lemma}\label{lm-ele-enerest}
If $f(t, \bx, \bv)\in C([0, T^*); H^1_{\omega})$, $\rho(t, \bx)\in C([0, T^*); H^1\cap W^{1, q})$, $\bu(t, \bx)\in C([0, T^*); D^1\cap D^2)\cap L^2(0, T^*; D^{2, q})$ is a strong solution to \eqref{eq-cs-ns}-\eqref{eq-sys-inidata}, then it holds for $t\in [0, T^*)$ that
\begin{equation}\label{eq-cs-ns-enconser}
 \begin{gathered}
   E(t)+\int_0^t \Big(\mu|\nabla \bu(\tau)|_{L^2}^2+(\mu+\lambda)|\nabla \cdot \bu(\tau)|_{L^2}^2\Big)d \tau +\int_0^t \int_{\bbr^6} f(\tau,\bx,\bv)(\bu-\bv)^2 d\bx d\bv d\tau\\
   +\frac12\int_0^t \int_{\bbr^6}\int_{\bbr^6}\varphi(|\bx-\by|)f(\tau,\by,\bv^*)f(\tau,\bx,\bv)(\bv^*-\bv)^2d\by d\bv^* d\bx d\bv d\tau=E_0.
 \end{gathered}
\end{equation}
\end{lemma}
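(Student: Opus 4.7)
\medskip
\noindent\textbf{Proof proposal.} The plan is to derive the identity by combining two energy computations: testing the Navier--Stokes system against $\bu$ (to produce the fluid kinetic and internal energies together with the viscous dissipation and the drag term) and multiplying the kinetic equation by $\tfrac{1}{2}\bv^2$ and integrating in $(\bx,\bv)$ (to produce the particle kinetic energy together with the Cucker--Smale interaction and the opposite side of the drag term). Summation and a symmetrization argument on $L[f]$ should give the identity exactly; integrating in time from $0$ to $t$ finishes the job.

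For the fluid part, I would first combine $\eqref{eq-cs-ns}_2$ and $\eqref{eq-cs-ns}_3$ to write
\[
 \partial_t\left(\tfrac12\rho\bu^2\right)+\nabla\cdot\left(\tfrac12\rho\bu^2\bu\right)+\bu\cdot\nabla P = \mu\bu\cdot\Delta\bu+(\mu+\lambda)\bu\cdot\nabla\nabla\cdot\bu + \int_{\bbr^3}f(\bv-\bu)\,d\bv\cdot\bu.
\]
Then use $\eqref{eq-cs-ns}_2$ together with $P=\rho^\gamma$ to produce $\partial_t\tfrac{\rho^\gamma}{\gamma-1}+\nabla\cdot\big(\tfrac{\gamma\rho^\gamma}{\gamma-1}\bu\big)=\bu\cdot\nabla P$, so integrating and using decay of $\bu$, $\rho$ at infinity cancels the pressure work and gives
\[
 \frac{d}{dt}\int_{\bbr^3}\left(\tfrac12\rho\bu^2+\tfrac{\rho^\gamma}{\gamma-1}\right)d\bx+\int_{\bbr^3}\!\big(\mu|\nabla\bu|^2+(\mu+\lambda)|\nabla\cdot\bu|^2\big)d\bx = \int_{\bbr^3}\int_{\bbr^3}f(\bv-\bu)\cdot\bu\,d\bv d\bx.
\]

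For the kinetic part, multiply $\eqref{eq-cs-ns}_1$ by $\tfrac12\bv^2$ and integrate by parts in $\bv$; the transport term in $\bx$ vanishes upon integration, and the $\nabla_{\bv}$ term transfers a $\bv$ onto the flux. This yields
\[
 \frac{d}{dt}\int_{\bbr^6}\tfrac12 f\bv^2\,d\bx d\bv = \int_{\bbr^6}f\bv\cdot L[f]\,d\bx d\bv+\int_{\bbr^6}f\bv\cdot(\bu-\bv)\,d\bx d\bv.
\]
The key algebraic step is to symmetrize the $L[f]$ contribution: writing out $L[f]$ and swapping $(\bx,\bv)\leftrightarrow(\by,\bv^*)$ (using that $\varphi(|\bx-\by|)$ is symmetric) gives
\[
 \int_{\bbr^6}f\bv\cdot L[f]\,d\bx d\bv = -\tfrac12\int_{\bbr^{12}}\varphi(|\bx-\by|)f(\tau,\bx,\bv)f(\tau,\by,\bv^*)(\bv^*-\bv)^2\,d\by d\bv^*d\bx d\bv.
\]
Adding the two energy identities, the cross terms $\int\!\!\int f\bv\cdot\bu$ and $-\int\!\!\int f|\bu|^2$ combine with $-\int\!\!\int f|\bv|^2$ to form the friction dissipation $-\int\!\!\int f(\bu-\bv)^2$, and integrating in time from $0$ to $t$ yields \eqref{eq-cs-ns-enconser}.

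The main obstacles are essentially issues of rigor rather than of principle. First, one must justify the vanishing of all boundary terms in the integration by parts (both in $\bx$ and $\bv$); for the $f$-integrals this follows from $f\in C([0,T^*);H^1_\omega)$ since the weight $\omega$ enforces polynomial decay of all needed moments (so in particular $f\bv^k\in L^1$ for $k\le 2$ and the fluxes $f\bv$, $f\bv^2$, $fL[f]\bv^2$ are all integrable), while for the fluid computation one invokes $\bu\in D^1\cap D^2\subset L^6\cap L^\infty$ and $\rho\in H^1\cap W^{1,q}$. Second, a standard smoothing-in-time argument (mollifying the equations, applying the smooth energy identity, and passing to the limit using the continuity properties embedded in Definition~\ref{def-stro}) may be needed to treat $\partial_t$ rigorously, since the product $\rho\bu^2$ is only known to lie in the regularity class inherited from the strong solution. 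The symmetrization step itself is elementary once the $\bv^*$-moment bound $\int f|\bv^*|\,d\by d\bv^* <\infty$ is in hand, which again is supplied by the weight $\omega$.
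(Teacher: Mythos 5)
Your proposal is correct and follows essentially the same route as the paper: multiply the kinetic equation by $\tfrac12\bv^2$ and symmetrize the $L[f]$ contribution, test the momentum equation against $\bu$ and convert the pressure work into $\tfrac{d}{dt}\int \tfrac{P}{\gamma-1}\,d\bx$ via the continuity equation, then add so the cross terms assemble into $-\int f(\bu-\bv)^2$ and integrate in time. Your additional remarks on the integrability of the moments (supplied by the weight $\omega$) and the vanishing of boundary terms are consistent with, and slightly more explicit than, the paper's treatment.
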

\begin{proof}
It follows from $f_0\in H^1_{\omega}$ that
\begin{equation}\label{eq-cs-ener-ini}
  \begin{aligned}
    \int_{\bbr^6}f_0\bv^2d\bx d\bv =& \int_{\bbr^6}f_0\bv^2(1+\bx^2+\bv^2)^{\frac{\alpha}{2}} (1+\bx^2+\bv^2)^{-\frac{\alpha}{2}}d\bx d\bv\\
    \le& |(1+\bx^2+\bv^2)^{-\frac{\alpha}{2}}|_{L^2}\left(\int_{\bbr^6}|f_0|^2\bv^4(1+\bx^2+\bv^2)^{\alpha} d\bx d\bv\right)^{\frac12}\\
    \le& C|f_0|_{L^2_{\omega}}.
  \end{aligned}
\end{equation}
Multiplying $\eqref{eq-cs-ns}_1$ by $\frac12 \bv^2$, and integrating the resulting equation over $\bbr^3\times\bbr^3$ lead to
\begin{equation}\label{eq-cs-ener-dt}
  \begin{gathered}
    \frac{d}{dt}\int_{\bbr^6}\frac12 f\bv^2d\bx d\bv +\frac12 \int_{\bbr^6}\int_{\bbr^6}\varphi(|\bx-\by|)f(t,\by,\bv^*)f(t,\bx,\bv)(\bv^*-\bv)^2d\by d\bv^* d\bx d\bv \\
    =\int_{\bbr^6}f\bv\cdot(\bu-\bv)d\bx d\bv.
  \end{gathered}
\end{equation}
Multiplying $\eqref{eq-cs-ns}_3$ by $\bu$, and integrating the resulting equation over $\bbr^3$ give
\begin{equation}\label{eq-ns-ener-dt}
  \begin{gathered}
    \frac{d}{dt}\int_{\bbr^3}\Bigg(\frac12 \rho\bu^2+\frac{P(\rho)}{\gamma-1}\Bigg)d\bx +\mu|\nabla \bu(t)|_{L^2} +(\mu+\lambda)|\nabla \cdot \bu(t)|_{L^2}\\
    =\int_{\bbr^6}f\bu\cdot(\bv-\bu)d\bx d\bv,
  \end{gathered}
\end{equation}
where we have used the following equality
\[
 \int_{\bbr^3}\nabla P\cdot\bu d\bx =-\int_{\bbr^3}P \nabla\cdot\bu d\bx=\frac{d}{dt}\int_{\bbr^3} \frac{P}{\gamma-1} d\bx.
\]
Adding \eqref{eq-cs-ener-dt} to \eqref{eq-ns-ener-dt}, and integrating the resulting equation over $[0,t]$, $0\le t< T^*$, result in our conclusion \eqref{eq-cs-ns-enconser}. This completes the proof.
\end{proof}
Next we present the proof of \ref{thm-blowup} by contradiction. Suppose
\begin{equation}\label{eq-blowup-antassump}
 |\rho(t,\bx)|_{L^{\infty}(0,T^*;L^{\infty})}+\int_0^{T^*}(|\bu(t)|_{L^{\infty}}+|\nabla \bu(t)|_{L^{\infty}}^2) dt \le C(T^*)<\infty.
\end{equation}
It suffices to show that $(f(T^*,\bx, \bv), \rho(T^*,\bx, \bv), \bu(T^*,\bx, \bv))$ satisfies the initial conditions in Theorem \ref{thm-loc-exist}.
\vskip 3mm
\noindent\textit{Proof of Theorem \ref{thm-blowup}}.Using \eqref{eq-cs-ns-enconser}, we know that
\begin{equation}\label{eq-besti}
 \begin{aligned}
   |\mathbf{b}(t,\bx)|=&\int_{\bbr^{2d}} \varphi(|\bx-\by|)f(t, \by,\bv^*) \bv^* d \by d \bv^*\\
   \le & \left(\int_{\bbr^{2d}} f(t, \by,\bv^*) d \by d \bv^* \right)^{\frac12}\left(\int_{\bbr^{2d}} f(t, \by,\bv^*) |\bv^*|^2 d \by d \bv^* \right)^{\frac12}\\
   \le &C(C_0, E_0), \quad 0\le t < T^*.
 \end{aligned}
\end{equation}
By \eqref{eq-exnV}, it follows from \eqref{eq-blowup-antassump} and \eqref{eq-besti} that
\begin{equation}\label{eq-rtsta}
 R(T^*)\le R_0+ \int_0^{T^*}\Big(C(C_0, E_0)+|\bu(t)|_{\infty}\Big)dt \le C(C_0,E_0, T^*).
\end{equation}
We use Proposition \ref{prop-kine-cs-wp} (2), \eqref{eq-blowup-antassump} and \eqref{eq-rtsta} to deduce that
\begin{equation}\label{eq-ftsta}
 \begin{aligned}
 \sup_{0\le t<T^*}|f(t)|_{H_{\omega}^1}\le& |f_0|_{H_{\omega}^1}\exp \left( C(C_0,E_0, T^*)\int_0^{T^*}(1+|\bu(t)|_{L^{\infty}}+|\nabla\bu(t)|_{L^{\infty}})d t \right)\\ \le& C(C_0,E_0, T^*).
 \end{aligned}
\end{equation}
Denote by $\Dot{\bu}:=\bu_t +\bu\cdot\nabla\bu$ the convective derivative of $\bu$. Multiplying $\eqref{eq-cs-ns}_3$ by $\Dot{\bu}$, and integrating the resulting equation over $\bbr^3$ lead to
\begin{equation}\label{eq-gu-gron}
 \begin{aligned}
   &\frac{d}{dt}\Big(\mu|\nabla \bu|_{L^2}^2+(\mu+\lambda)|\nabla\cdot\bu|_{L^2}^2\Big)+|\rho^{\frac12}\Dot{\bu}|_{L^2}^2\\
   =&\int_{\bbr^3}\Big(\mu\bu\cdot\nabla\bu\cdot\Delta\bu +(\mu+\lambda)\bu\cdot\nabla\bu\cdot\nabla\nabla\cdot\bu\Big)d\bx\\
   & -\int_{\bbr^3}\nabla P\cdot \dot{\bu}d\bx
   +\int_{\bbr^6}f(\bv-\bu)\cdot\dot\bu d\bv d\bx\\
   =:&\sum_{i=1}^3Q_i.
 \end{aligned}
\end{equation}
We estimate each $Q_i$ $(1\le i\le 3)$ as follows.
\begin{equation*}
 \begin{aligned}
   Q_1=&\int_{\bbr^3}\Big(\mu\bu\cdot\nabla\bu\cdot\Delta\bu +(\mu+\lambda)\bu\cdot\nabla\bu\cdot\nabla\nabla\cdot\bu\Big)d\bx\\
   =&\mu\int_{\bbr^3}u_i\partial_iu_j\partial_{kk}u_jd\bx +(\mu+\lambda)\int_{\bbr^3}u_i\partial_iu_j \partial_j\partial_ku_k d\bx\\
   =&-\mu\int_{\bbr^3}\partial_ku_i \partial_iu_j \partial_ku_j d\bx +\frac{\mu}{2}\int_{\bbr^3}\nabla \cdot\bu |\nabla\bu|^2 d\bx\\
   &-(\mu+\lambda)\int_{\bbr^3}\partial_ju_i \partial_iu_j \partial_ku_k d\bx +\frac{(\mu+\lambda)}{2}\int_{\bbr^3}|\nabla \cdot\bu|^3 d\bx\\
   \le& C|\nabla\bu|_{L^{\infty}}|\nabla\bu|_{L^2}^2;
\end{aligned}
\end{equation*}
\begin{equation*}
 \begin{aligned}
   Q_2=&-\int_{\bbr^3}\nabla P\cdot \dot\bu d\bx\\
   =&-\int_{\bbr^3}(\bu_t +\bu\cdot \nabla\bu)\cdot \nabla P  d\bx\\
   =&\frac{d}{dt}\int_{\bbr^3}P\nabla\cdot\bu d\bx +\int_{\bbr^3}\Big(\bu\cdot\nabla P\nabla\cdot\bu +\gamma P|\nabla\cdot\bu|^2 -\bu\cdot \nabla\bu\cdot \nabla P\Big)  d\bx\\
   =&\frac{d}{dt}\int_{\bbr^3}P\nabla\cdot\bu d\bx +(\gamma-1)\int_{\bbr^3}P|\nabla\cdot\bu|^2 d\bx +\int_{\bbr^3}P \partial_iu_j \partial_ju_i d\bx\\
   \le&\frac{d}{dt}\int_{\bbr^3}P\nabla\cdot\bu d\bx + C(P)|\nabla\bu|_{L^2}^2;
\end{aligned}
\end{equation*}
\begin{equation*}
 \begin{aligned}
   Q_3=&\int_{\bbr^6}f(\bu_t +\bu\cdot \nabla\bu)\cdot (\bv-\bu)d\bv d\bx\\
   =&\int_{\bbr^3}\int_{\bbr^3}f\bv d\bv \cdot \bu_t d\bx -\int_{\bbr^3}\int_{\bbr^3}f d\bv \bu\cdot\bu_t d\bx +\int_{\bbr^3}\bu\cdot\nabla\bu\cdot\int_{\bbr^3}f(\bv-\bu)d\bv d\bx\\
   =&\left(\int_{\bbr^6}f\bv\cdot\bu d\bv d\bx\right)_t -\int_{\bbr^3}\int_{\bbr^3}f\bv\otimes\bv d\bv :\bu d\bx\\
   & -\int_{\bbr^3}\int_{\bbr^3}\Big(fL[f]+f(\bu-\bv)\Big)d\bv \cdot \bu d\bx\\
   & -\frac12 \left(\int_{\bbr^6} f\bu^2 d\bv d\bx\right)_t
    +\int_{\bbr^3}\int_{\bbr^3}f\bv d\bv \cdot\nabla\bu \cdot\bu d\bx \\
   &+\int_{\bbr^3} \bu \cdot\nabla\bu \cdot \int_{\bbr^3}f\bv d\bv d\bx -\int_{\bbr^3}\int_{\bbr^3}f d\bv \bu\cdot\nabla\bu \cdot\bu d\bx\\
   \le& \left(\int_{\bbr^6}f\bv\cdot\bu d\bv d\bx -\frac12 \int_{\bbr^6} f\bu^2 d\bv d\bx\right)_t +\left|\int_{\bbr^3}f\bv^2 d\bv\right|_{L^2}|\nabla \bu|_{L^2} \\ &+C|f(1+\bv^2)^{\frac12}|_{L^1} \left|\int_{\bbr^3}f(1+\bv^2)^{\frac12} d\bv\right|_{L^{\frac65}} |\nabla \bu|_{L^2} +\left|\int_{\bbr^3}f\bv d\bv\right|_{L^{\frac65}} |\nabla \bu|_{L^2}\\
   &+2\left|\int_{\bbr^3}f\bv d\bv\right|_{L^3}|\nabla \bu|_{L^2}^2 +\left|\int_{\bbr^3}f d\bv\right|_{L^6}|\nabla \bu|_{L^2}^3\\
   \le& \left(\int_{\bbr^6}f\bv\cdot\bu d\bv d\bx -\frac12 \int_{\bbr^6} f\bu^2 d\bv d\bx\right)_t \\
   &+C\Big(1 +|f|_{H^1_{\omega}}^2 +|\nabla_{\bx}f|_{L^2_{\omega}}^2|\nabla \bu|_{L^2}^2 \Big)|\nabla \bu|_{L^2}^2 +|f|_{L^2_{\omega}}^2.
\end{aligned}
\end{equation*}
Substituting these estimates into \eqref{eq-gu-gron}, we obtain by \eqref{eq-blowup-antassump} and \eqref{eq-ftsta} that
\begin{equation}\label{eq-gu-ineq-gron}
 \begin{aligned}
   &\frac{d}{dt}\Bigg(\mu|\nabla \bu|_{L^2}^2+(\mu+\lambda)|\nabla\cdot\bu|_{L^2}^2 -\int_{\bbr^3}P\nabla\cdot\bu d\bx\\&  -\int_{\bbr^6}f\bv\cdot\bu d\bv d\bx +\frac12 \int_{\bbr^6} f\bu^2 d\bv d\bx \Bigg) +|\rho^{\frac12}\Dot{\bu}|_{L^2}^2\\
   \le& C(C_0,E_0, T^*)\Big(1+|\nabla\bu|_{L^{\infty}}+|\nabla \bu|_{L^2}^2\Big)|\nabla \bu|_{L^2}^2 +C(C_0,E_0, T^*),
 \end{aligned}
\end{equation}
holds for $t \in [0, T^*)$.

Applying $\bigg(\frac{\partial}{\partial t}\star +\nabla\cdot(\bu\otimes\star)\bigg)\cdot\dot\bu$ to $\eqref{eq-cs-ns}_3$ and integrating the resulting equation over $\bbr^3$, we obtain
\begin{equation}\label{eq-rudot-dt}
  \begin{aligned}
    &\frac12 \frac{d}{dt}\int_{\bbr^3}\rho\dot\bu^2 d\bx +\mu|\nabla\dot\bu|_{L^2}^2 +(\mu+\lambda)|\nabla\cdot\dot\bu|_{L^2}^2\\
    =&-\int_{\bbr^3}\Big(\nabla P_t +\nabla\cdot(\bu\otimes\nabla P)\Big)\cdot\dot\bu d\bx\\
    &+\mu \int_{\bbr^3}\Big(\nabla\cdot(\bu\otimes\Delta \bu) -\Delta(\bu\cdot\nabla\bu) \Big)\cdot\dot\bu d\bx\\
    &+(\mu+\lambda)\int_{\bbr^3}\Big(\nabla\cdot(\bu\otimes\nabla\nabla\cdot\bu) -\nabla\nabla\cdot(\bu\cdot\nabla\bu)\Big)\cdot\dot\bu d\bx\\
    &+\int_{\bbr^3}\int_{\bbr^3}f_t(\bv-\bu)d\bv\cdot\dot\bu d\bx -\int_{\bbr^3}\int_{\bbr^3}f d\bv \bu_t\cdot \dot\bu d\bx\\
    &+\int_{\bbr^3}\nabla\cdot\bigg(\bu\otimes\int_{\bbr^3}\int_{\bbr^3}f(\bv-\bu)d\bv\bigg)\cdot\dot\bu d\bx\\
    =:&\sum_{i=1}^6S_i.
  \end{aligned}
\end{equation}
We estimate each $S_i$ $(1\le i \le 6)$ as follows.
\begin{equation*}
 \begin{aligned}
   S_1=&-\int_{\bbr^3}\Big(\nabla P_t +\nabla\cdot(\bu\otimes\nabla P)\Big)\cdot\dot\bu d\bx\\
   =&\int_{\bbr^3}\Big(P_t \nabla\cdot\dot\bu -\nabla\cdot\bu\nabla P\cdot\dot\bu -\bu\cdot\nabla\nabla P\cdot\dot\bu\Big) d\bx\\
   =&-\int_{\bbr^3} P \partial_ju_i \partial_i\dot u_j d\bx +(\gamma-1)\int_{\bbr^3} P\nabla\cdot\bu \nabla\cdot\dot\bu d\bx\\
   \le& C(P)|\nabla\cdot\bu|_{L^2}|\nabla\cdot\dot\bu|_{L^2}\\
   \le& \frac{\mu}{12}|\nabla\dot\bu|_{L^2}^2 +C(P)|\nabla\bu|_{L^2}^2;
\end{aligned}
\end{equation*}
\begin{equation*}
 \begin{aligned}
   S_2=&\mu \int_{\bbr^3}\Big(\nabla\cdot(\bu\otimes\Delta \bu) -\Delta(\bu\cdot\nabla\bu) \Big)\cdot\dot\bu d\bx\\
   =&\mu \int_{\bbr^3}\Big(\partial_ju_j \partial_{ii}u_k \dot u_k + u_j\partial_j\partial_{ii}u_k \dot u_k -\partial_{ii}(u_j\partial_ju_k)\dot u_k \Big) d\bx\\
   =&\mu \int_{\bbr^3}\Big(\partial_iu_j \partial_ju_k \partial_i\dot u_k +\partial_iu_j \partial_iu_k \partial_j\dot u_k- \partial_ju_j \partial_iu_k \partial_i\dot u_k \Big) d\bx\\
   \le& \frac{\mu}{12}|\nabla\dot\bu|_{L^2}^2 +C|\nabla \bu|_{\infty}^2|\nabla\bu|_{L^2}^2;
\end{aligned}
\end{equation*}
\begin{equation*}
 \begin{aligned}
   S_3=&(\mu+\lambda) \int_{\bbr^3}\Big(\nabla\cdot(\bu\otimes\nabla\nabla\cdot\bu) -\nabla\nabla\cdot(\bu\cdot\nabla\bu)\Big)\cdot\dot\bu d\bx\\
   =&(\mu+\lambda) \int_{\bbr^3}\Big(\partial_iu_i \partial_{kj}u_j \dot u_k + u_i\partial_i\partial_{kj}u_j \dot u_k -\partial_{j}(u_i\partial_iu_j)\partial_k\dot u_k \Big) d\bx\\
   =&(\mu+\lambda) \int_{\bbr^3}\Big(\partial_ju_i \partial_iu_j \partial_k\dot u_k +\partial_ku_i \partial_ju_j \partial_i\dot u_k- \partial_iu_i \partial_ju_j \partial_k\dot u_k \Big) d\bx\\
   \le& \frac{\mu}{12}|\nabla\dot\bu|_{L^2}^2 +C|\nabla \bu|_{\infty}^2|\nabla\bu|_{L^2}^2;
\end{aligned}
\end{equation*}
\begin{equation*}
 \begin{aligned}
   S_4=&\int_{\bbr^3}\int_{\bbr^3}f_t(\bv-\bu)d\bv\cdot\dot\bu d\bx \\
   =&-\int_{\bbr^3}\int_{\bbr^3}f \bv d\bv \cdot\nabla\bu\cdot\dot\bu d\bx +\int_{\bbr^3}\int_{\bbr^3}f \bv \otimes (\bv-\bu) d\bv : \nabla\dot\bu d\bx\\
   &+\int_{\bbr^3}\int_{\bbr^3}\Big(fL[f] +f(\bu-\bv) d\bv \dot\bu d\bx\\
   \le& 2\left|\int_{\bbr^3}f\bv d\bv\right|_{L^3}|\nabla \bu|_{L^2}|\nabla\dot \bu|_{L^2} +\left|\int_{\bbr^3}f\bv^2 d\bv\right|_{L^2}|\nabla\dot \bu|_{L^2}\\
   &+C|f(1+\bv^2)^{\frac12}|_{L^1} \left|\int_{\bbr^3}f(1+\bv^2)^{\frac12} d\bv\right|_{L^{\frac65}} |\nabla \dot\bu|_{L^2} +\left|\int_{\bbr^3}f d\bv\right|_{L^{\frac32}} |\nabla \bu|_{L^2}|\nabla\dot \bu|_{L^2}\\
   \le& \frac{\mu}{12}|\nabla\dot\bu|_{L^2}^2 +C|f|_{H^1_{\omega}}^2|\nabla\bu|_{L^2}^2 +C|f|_{L^2_{\omega}}^2 +C|f|_{L^2_{\omega}}^4;\\[4mm]
   S_5=&-\int_{\bbr^3}\int_{\bbr^3}f d\bv \bu_t\cdot \dot\bu d\bx\\
   =&-\int_{\bbr^3}\int_{\bbr^3}f d\bv (\dot\bu -\bu\cdot\nabla\bu)\cdot\dot\bu d\bx \\
   \le&\int_{\bbr^3}\int_{\bbr^3}f d\bv \bu\cdot\nabla\bu\cdot\dot\bu d\bx \\
   \le& \left|\int_{\bbr^3}f d\bv\right|_{L^6}|\nabla \bu|_{L^2}^2|\nabla\dot\bu|_{L^2}\\
   \le& \frac{\mu}{12}|\nabla\dot\bu|_{L^2}^2 +C|f|_{H^1_{\omega}}^2|\nabla\bu|_{L^2}^4;
\end{aligned}
\end{equation*}
\begin{equation*}
 \begin{aligned}
   S_6=&\int_{\bbr^3}\nabla\cdot\bigg(\bu\otimes\int_{\bbr^3}\int_{\bbr^3}f(\bv-\bu)d\bv\bigg)\cdot\dot\bu d\bx\\
   =&\int_{\bbr^3}\nabla\cdot\bu \int_{\bbr^3}\int_{\bbr^3}f(\bv-\bu)d\bv\cdot\dot\bu d\bx +\int_{\bbr^3}\bu\cdot\nabla\int_{\bbr^3}f(\bv-\bu)d\bv\cdot\dot\bu d\bx\\
   =&-\int_{\bbr^3}\bu\otimes\int_{\bbr^3}f(\bv-\bu)d\bv : \nabla\dot\bu d\bx\\
   \le& \left|\int_{\bbr^3}f\bv d\bv\right|_{L^3}|\nabla \bu|_{L^2}|\nabla\dot\bu|_{L^2} +\left|\int_{\bbr^3}f d\bv\right|_{L^6}|\nabla \bu|_{L^2}^2|\nabla\dot\bu|_{L^2}\\
   \le& \frac{\mu}{12}|\nabla\dot\bu|_{L^2}^2 +C|f|_{H^1_{\omega}}^2\Big(1+|\nabla\bu|_{L^2}^2\Big)|\nabla\bu|_{L^2}^2.
\end{aligned}
\end{equation*}
Substituting these estimates into \eqref{eq-rudot-dt}, we obtain by \eqref{eq-blowup-antassump} and \eqref{eq-ftsta} that
\begin{equation}\label{eq-sqrudt-ineq-gron}
   \frac{d}{dt}|\rho^{\frac12}\dot\bu|_{L^2}^2 +\mu|\nabla\dot\bu|_{L^2}^2
   \le C(C_0,E_0, T^*)\Big(1+|\nabla\bu|_{L^{\infty}}^2+|\nabla \bu|_{L^2}^2\Big)|\nabla \bu|_{L^2}^2 +C(C_0,E_0, T^*),
\end{equation}
holds for $t \in [0, T^*)$.
Combining \eqref{eq-gu-ineq-gron} with \eqref{eq-sqrudt-ineq-gron}, we deduce that
\begin{equation}\label{eq-gupsqrudt-ineq-gron}
 \begin{aligned}
   &\frac{d}{dt}\Bigg(|\rho^{\frac12}\dot\bu|_{L^2}^2+\mu|\nabla \bu|_{L^2}^2+(\mu+\lambda)|\nabla\cdot\bu|_{L^2}^2 -\int_{\bbr^3}P\nabla\cdot\bu d\bx\\&  -\int_{\bbr^6}f\bv\cdot\bu d\bv d\bx +\frac12 \int_{\bbr^6} f\bu^2 d\bv d\bx \Bigg) +\mu|\nabla\dot\bu|_{L^2}^2\\
   \le& C(C_0,E_0, T^*)\Big(1+|\nabla\bu|_{L^{\infty}}^2+|\nabla \bu|_{L^2}^2\Big)|\nabla \bu|_{L^2}^2 +C(C_0,E_0, T^*),
 \end{aligned}
\end{equation}
holds for $t \in [0, T^*)$.
Multiplying $\eqref{eq-cs-ns}_2$ by $2\rho$ and integrating the resulting equation equation over $\bbr^3$
lead to
\begin{equation}\label{eq-rdt-gron}
  \frac{d}{dt}|\rho|_{L^2}^2=-\int_{\bbr^3}\rho^2\nabla\cdot\bu d\bx \le |\nabla\bu|_{L^{\infty}}|\rho|_{L^2}^2.
\end{equation}
By \eqref{eq-blowup-antassump}, it follows from \eqref{eq-rdt-gron} that
\begin{equation}\label{eq-rhotstar}
  \sup_{0\le t<T^*}|\rho(t)|_{L^2}\le |\rho_0|_{L^2}\exp\left(\int_0^{T^*}|\nabla\bu(t)|_{L^{\infty}}d t\right) \le C(C_0, T^*).
\end{equation}
Using \eqref{eq-blowup-antassump}, \eqref{eq-ftsta} and \eqref{eq-rhotstar}, we have for $0\le t<T^*$
\begin{equation}\label{eq-pdivu}
 \begin{aligned}
  \int_{\bbr^3}P\nabla\cdot\bu d\bx \le& \frac{\mu}{4}|\nabla \bu|_{L^2}^2 +C(P')|\rho|_{L^2}^2\\
   \le& \frac{\mu}{4}|\nabla \bu|_{L^2}^2 +C(C_0, T^*),
  \end{aligned}
\end{equation}
and
\begin{equation}\label{eq-intfvdotu}
 \begin{aligned}
  \int_{\bbr^3}\int_{\bbr^3}f\bv d\bv\cdot\bu d\bx \le& \frac{\mu}{4}|\nabla \bu|_{L^2}^2 +C\left|\int_{\bbr^3}f\bv d\bv\right|_{L^{\frac65}} ^2\\
    \le& \frac{\mu}{4}|\nabla \bu|_{L^2}^2 +C|f|_{L^2_{\omega}}^2\\
   \le& \frac{\mu}{4}|\nabla \bu|_{L^2}^2 +C(C_0, E_0, T^*).
  \end{aligned}
\end{equation}
Employing \eqref{eq-cs-ns-enconser}, \eqref{eq-blowup-antassump}, \eqref{eq-pdivu},  \eqref{eq-intfvdotu} and the compatibility condition, solving the Gronwall inequality \eqref{eq-gupsqrudt-ineq-gron} gives
\begin{equation}\label{eq-sqrudtpgutstar}
  \sup_{0\le t<T^*}\Big(|\rho^{\frac12}\dot\bu(t)|_{L^2}^2+|\nabla\bu(t)|_{L^2}^2\Big)
  +\int_0^{T^*}|\nabla\dot\bu(t)|_{L^2}^2 d t \le C(C_0,E_0, T^*).
\end{equation}
Applying $\nabla$ to $\eqref{eq-cs-ns}_2$, we obtain
\begin{equation}\label{eq-grt}
  (\nabla\rho)_t +\nabla\bu\cdot\nabla\rho+ \bu\cdot\nabla\nabla\rho +\nabla\rho\nabla\cdot\bu +\rho\nabla\nabla\cdot\bu=0.
\end{equation}
Multiplying \eqref{eq-grt} by $6|\nabla\rho|^4\nabla\rho$ gives rise to
\begin{equation}\label{eq-grsixt}
  \begin{gathered}
  \Big(|\nabla\rho|^6\Big)_t +\nabla\cdot\Big(|\nabla\rho|^6\bu\Big)+ 5|\nabla\rho|^6\nabla\cdot\bu + 6|\nabla\rho|^4\nabla\rho \cdot\nabla\bu \cdot\nabla\rho\\
   +6\rho|\nabla\rho|^4\nabla\rho \cdot\nabla\nabla\cdot\bu=0.
  \end{gathered}
\end{equation}
Integrating \eqref{eq-grsixt} over $\bbr^3$, we infer that
\begin{equation}\label{eq-grsixt-gron}
  \frac{d}{dt}|\nabla\rho|_{L^6}\le C|\nabla\bu(t)|_{L^{\infty}}|\nabla\rho|_{L^6} +C|\rho|_{L^{\infty}}|\nabla^2\bu|_{L^6}.
\end{equation}
Using the elliptic estimates, it follows from $\eqref{eq-cs-ns}_3$ that
\begin{equation}\label{eq-gtwousix}
 \begin{aligned}
  |\nabla^2\bu|_{L^6}\le& C\Bigg(|\rho\dot\bu|_{L^6} +|\nabla P|_{L^6}+ \left|\int_{\bbr^3}f(\bv-\bu)d\bv\right|_{L^6}\Bigg)\\
  \le& C |\rho|_{L^{\infty}} |\nabla\dot\bu|_{L^2} +C(P')|\nabla\rho|_{L^6}
     +C|\nabla_{\bx}f|_{L^2_{\omega}} +C |\bu|_{L^{\infty}}|\nabla_{\bx}f|_{L^2_{\omega}}.
  \end{aligned}
\end{equation}
Substituting \eqref{eq-gtwousix} into \eqref{eq-grsixt}, and using \eqref{eq-blowup-antassump},
 \eqref{eq-ftsta},we obtain for $0\le t<T^*$ that
\begin{equation}\label{eq-grsixtest-gron}
  \frac{d}{dt}|\nabla\rho|_{L^6}\le C(T^*)\Big(1+|\nabla\bu(t)|_{L^{\infty}}\Big)|\nabla\rho|_{L^6} +C(C_0, E_0, T^*)\Big(1+|\bu|_{L^{\infty}}+|\nabla\dot\bu|_{L^2}\Big).
\end{equation}
Employing \eqref{eq-blowup-antassump} and \eqref{eq-sqrudtpgutstar}, solving the above Gronwall inequality gives
\begin{equation}\label{eq-grsixtstar}
  \sup_{0\le t<T^*}|\nabla\rho(t)|_{L^6}\le C(C_0, E_0, T^*).
\end{equation}
By the Gagliardo--Nirenberg inequality, we have for $2\le r \le 6$
\[
  |\rho|_r\le |\rho|_{L^2}^{\theta_1}|\nabla\rho|_{L^6}^{1-\theta_1}\le |\rho|_{L^2}+ |\nabla\rho|_{L^6},
\]
and
\[
  |\nabla\rho|_r\le |\rho|_{L^2}^{\theta_2}|\nabla\rho|_{L^6}^{1-\theta_2}\le |\rho|_{L^2}+ |\nabla\rho|_{L^6},
\]
where
\[
 -\frac{3}{r}=-\frac{3\theta_1}{2}+\frac{1-\theta_1}{2},\quad 1-\frac{3}{r}=-\frac{3\theta_2}{2}+\frac{1-\theta_2}{2}.
\]
From \eqref{eq-rhotstar},  \eqref{eq-grsixtstar} and the above interpolation, we know that
\begin{equation}\label{eq-rsobotstar}
  \sup_{0\le t<T^*}|\rho(t)|_{H^1\cap W^{1,q}}\le C(C_0, E_0, T^*).
\end{equation}
Using the elliptic estimates again, it follows from $\eqref{eq-cs-ns}_3$ that
\begin{equation}\label{eq-gtwoutwo}
 \begin{aligned}
  |\nabla^2\bu|_{L^2}\le& C\Bigg(|\rho\dot\bu|_{L^2} +|\nabla P|_{L^2}+ \left|\int_{\bbr^3}f(\bv-\bu)d\bv\right|_{L^2}\Bigg)\\
  \le& C |\rho|_{L^{\infty}}^{\frac12} |\rho^{\frac12}\dot\bu|_{L^2} +C(P')|\nabla\rho|_{L^2}
     +C|f|_{L^2_{\omega}} +C |f|_{H^1_{\omega}}|\nabla\bu|_{L^2}.
  \end{aligned}
\end{equation}
By virtue of \eqref{eq-blowup-antassump}, \eqref{eq-ftsta}, \eqref{eq-sqrudtpgutstar} and \eqref{eq-rsobotstar}, we infer that
\begin{equation}\label{eq-gtutstar}
  \sup_{0\le t<T^*}|\bu(t)|_{D^2}\le C(C_0, E_0, T^*).
\end{equation}
In terms of the regularity of $f$, $\rho$ and $\bu$, it is easy to show $\rho \dot\bu \in C([0, T^*);L^2)$.
Define
\[
 \rho \dot\bu(T^*):=\lim_{t\to T^*}\rho \dot\bu(t) \quad \text{in $L^2(\bbr^3)$}
\]
and
\[
 g(T^*):=
  \begin{dcases}
    \rho(T^*)^{\frac12}\rho \dot\bu(T^*), \quad & \rho(T^*)\neq 0,\\
    0,  \quad& \rho(T^*)=0.
  \end{dcases}
\]
From \eqref{eq-sqrudtpgutstar}, we know $g(T^*)\in L^2(\bbr^3)$. Therefore, $\Big(f(T^*), \rho(T^*), \bu(T^*)\Big)$ satisfies all the conditions on the initial data. Thus, we can use Theorem \ref{thm-loc-exist} to extend the local strong solution beyond $T^*$, which contradicts our assumption on the life span. This completes the proof. $\hfill \square$

%%%%%%%%%%%%%%%%%%%%%%%%%%%%%%%%%%%%%%%%%%%%%%%%%%%%%%%%%%%%%%%%%%%%%%%%%%%%%%%%%%%%%%%%%%%%%%%%%%%%%%%%%%%
%
%                                Sec5. Conclusion
%
%%%%%%%%%%%%%%%%%%%%%%%%%%%%%%%%%%%%%%%%%%%%%%%%%%%%%%%%%%%%%%%%%%%%%%%%%%%%%%%%%%%%%%%%%%%%%%%%%%%%%%%%%%%
\section{Conclusion}
In this paper, we study the local existence and blowup criterion for the strong solutions to the kinetic Cucker--Smale model coupled with the isentropic compressible Navier--Stokes equation. Our result shows that the upper bound of $\rho(t,\bx)$ and the integrability of $|\bu(t)|_{L^{\infty}}$ and $|\nabla\bu(t)|_{L^{\infty}}^2$ control the blowup of the strong solutions to \eqref{eq-cs-ns}. We wish to give an insight into analyzing the existence of global-in-time strong solutions by this criterion.

Up to now, most previous relevant literatures are concentrated on the space-periodic domain, by using the positive lower bound of the interaction kernel or the Poincar\'e inequality in the process of analysis, while in this paper, we contribute a study for the whole space situation. The novelty of this paper is that we introduce a weighted Sobolev space and present a detailed analysis for the kinetic Cucker--Smale model, as well as manage to overcome the difficult estimates arising from the coupling term. Based on our investigation on the blowup mechanism, how to devise initial data to obtain  the global-in-time strong solutions to the coupled system, is an interesting problem deserving our further endeavor.
%\bibliographystyle{plain}
%\bibliography{Jin-Ref}

\end{document}